\newtheorem{theorem}{Theorem}[section]
\newtheorem{corollary}[theorem]{Corollary}
\newtheorem{lemma}[theorem]{Lemma}
\newtheorem{proposition}[theorem]{Proposition}
\theoremstyle{definition}
\newtheorem{definition}[theorem]{Definition}
\newtheorem{remark}[theorem]{Remark}
\newtheorem{example}[theorem]{Example}
\newcommand{\scalarp}[1]{\left\langle #1 \right\rangle}
\newcommand{\Lip}{\textup{Lip}}
\newcommand{\N}{\mathbb{N}}
\newcommand{\R}{\mathbb{R}}
\newcommand{\M}{\mathcal{M}}
\newcommand{\W}{\mathcal{W}}
\newcommand{\diver}{\textup{div}}
\newcommand{\Law}{\textup{Law}}
\newcommand{\conv}{*}
\newcommand{\defin}{:=}
\newcommand{\state}{Y}
\newcommand{\appstate}[2]{A_{#1,#2}}
\newcommand{\mylabel}[2]{#2\def\@currentlabel{#2}\label{#1}}
\newcommand{\zweivect}[3][]{{\left(\!\!#1\begin{array}{c}#2\\#3\end{array}#1\!\!\right)}}
\newcommand{\weak}{\rightharpoonup}
\DeclareMathOperator{\supp}{supp}
\newcommand{\Mu}{{\mathcal{M}(\R^d)}}
\newcommand{\gwbase}{\mathcal{W}_g}
\newcommand{\gw}[1]{{\gwbase\left(#1\right)}}
\newcommand{\Pt}[1]{\left({#1}\right)}
\title{Leader formation with mean-field birth and death models}
\author{Giacomo Albi}
\address{Department of Computer Science, University of Verona, Str. Le Grazie 15, Verona, IT-37134,
Italy.}
\email{{giacomo.albi@univr.it}}
\author{Mattia Bongini}
\address{Big Data and Marketing Analytics, CRIF, Via M. Fantin 3, Bologna, IT-40131, Italy}
\email{{mattia.bongini@crif.com}}
\author{Francesco Rossi}
\address{Department of Mathematics ``Tullio Levi-Civita'', University of Padova,Via Trieste, 63, Padova, IT-35121,
Italy.}
\email{{francesco.rossi@math.unipd.it}}
\author{Francesco Solombrino}
\address{Department of Mathematics and Applications ``R. Caccioppoli'', University of Naples ``Federico II''
Via Cintia, Monte S. Angelo, Naples, IT-80126, Italy.}
\email{{francesco.solombrino@unina.it}}
\date{}
\begin{document}

\markboth{G. Albi, M. Bongini, F. Rossi, F. Solombrino}{Leader formation with mean-field birth and death models}

%
%


%
%
%
%

%

\maketitle

\begin{abstract}
	We provide a mean-field description for a leader-follower dynamics with mass transfer among the two populations. This model allows the transition from followers to leaders and vice versa, with scalar-valued transition rates depending nonlinearly on the global state of the system at each time.
	
\noindent We first prove the existence and uniqueness of solutions for the leader-follower dynamics, under suitable assumptions. We then establish, for an appropriate choice of the initial datum, the equivalence of the system with a PDE-ODE system, that consists of a continuity equation over the state space and an ODE for the transition from leader to follower or vice versa. We further introduce a stochastic process approximating the PDE, together with a jump process that models the switch between the two populations. Using a propagation of chaos argument, we show that the particle system generated by these two processes converges in probability to a solution of the PDE-ODE system. Finally, several numerical simulations of social interactions dynamics modeled by our system are discussed.
\end{abstract}


\section{Introduction}\label{sec:intro}

The mathematical modeling of collective behavior for systems of interacting agents has spawned an enormous wealth of literature in recent years. From the study of biological, social and economical phenomena \cite{parisi08,albi2015invisible,cordier2005kinetic,ahn13} to automatic learning \cite{maggioni17,huang2018learning} and optimization heuristics \cite{dorigo2005ant,kennedy2011particle}, these models lay at the heart of some of today's most prominent lines of research: for the latest development in the field, we point to the surveys \cite{bak2013nature,bonginichapsparse,carrillo2017review,viza12} and references therein.

The modeling of such phenomena typically starts from particle-like systems as in statistical physics. These particle models are also called Agent Based Models, and they usually consist of a set of ODEs (one for each agent) interwined in a nonlinear way. Such a modeling approach is quite useful, with one of the main advantages being the explicit description of the mutual interaction among agents, but has huge problems to treat large systems of particles, as is the case with cells, molecules and social networks’ users. A classical approach to attack the problem is to pass to a continuous description of the system, which means to pass from a particle description to a kinetic descriptions where the unknown is the particle density distribution in the state space.

A useful tool in solving this problem is the \textit{mean-field} limit \cite{golseparticle},  which amounts to replace the influence of all the other individuals in the dynamics of any given agent by a single averaged effect, a technique that goes back to \cite{dobrushin79} in plasma physics: to exemplify it, if applied to a Hegselmann-Krause-type discrete particle system over $\R^d$ (see \cite{krause02})

$$\dot{x}_i(t) = \frac1N\sum^N_{j = 1} K(x_i(t) - x_j(t)), \qquad i = 1,\ldots,N \text{ and } t \in [0,T]$$

\noindent where $K$ denotes the \textit{interaction kernel} (which models the interaction between particles) it leads to a continuity equation of Vlasov type

$$\partial_t \mu_t = -\diver((K*\mu_t)\mu_t), \qquad  t \in [0,T]$$

\noindent with $\mu_t$ denoting the probability distribution of the particles over the state space $\R^d$ and
$$(K*\mu_t)(x) = \int_{\R^d} K(x-y)d\mu_t(y), \quad \text{ for every } x \in \R^d.$$
Notice how, in the process, the information of the pointwise positions $x_j(t)$ is replaced by the knowledge of the space distribution of the particles $\mu_t$. Such approach has the advantage of reducing the computational complexity of the models (overcoming the curse of dimensionality \cite{bellmandynamic}) and allows the so-called \textit{microfundation of macromodels}, i.e., the validation of the macroscopic dynamics from the coherence with the behavior of individuals (a central issue in the field of macroeconomics). The mean-field limit of systems of interacting agents has been thoroughly studied also in conjunction with irregular interaction kernel \cite{carrillo2014derivation,garroni2018convergence}, control problems \cite{lasry2007mean,MFOC,bongini2016pontryagin,fornasier2014mean,albi2017mean} and multiple populations \cite{cirant2015multi,bongini2016optimal,albi2014boltzmann,albi2017opinion}. Also models where the total mass of the system is not preserved in time, due to the presence of source (or sink) terms, have been considered (see for instance \cite[Sections 4-5]{Rossi-Review}). In other models, the total mass of the system is preserved, but not the role of the agents, since exchanges of mass between different populations are allowed. One of these models is the leader-follower dynamics studied in \cite{markowich,carrillo2018splitting}, given by

\begin{align}\label{eq:macroleadfollstrong}
\left\{\begin{aligned}
\partial_t \mu^F_t& = -\diver\big((K^{F}\conv\mu^F_t + K^{L}\conv\mu^L_t)\mu^F_t\big) \\
&\quad \quad \quad \quad \quad \quad \quad \quad \quad \quad - \alpha_F(\mu^F_t,\mu^L_t)\mu^F_t + \alpha_L(\mu^F_t,\mu^L_t)\mu^L_t,\\
\partial_t \mu^L_t& = -\diver\big((K^{F}\conv\mu^F_t + K^{L}\conv\mu^L_t)\mu^L_t\big) \\
&\quad \quad \quad \quad \quad \quad \quad \quad \quad \quad +\alpha_F(\mu^F_t,\mu^L_t)\mu^F_t -\alpha_L(\mu^F_t,\mu^L_t)\mu^L_t.
\end{aligned}\quad t\in[0,T].\right.
\end{align}
Here, two competing populations $\mu^F_t$ and $\mu^L_t$, of followers and leaders respectively, are in interaction. Both the masses of followers and leaders vary in time, while their sum is constant. The functionals $K^{i}:\R^d\rightarrow\R^d$ for $i\in\{F,L\}$ are interaction kernels, modeling their mutual spatial influence, while the transition rates $\alpha_F,\alpha_L:\mathcal{M}(\R^d)\times\mathcal{M}(\R^d)\rightarrow[0,+\infty)$ govern the exchange of mass between $\mu^F_t$ and $\mu^L_t$. In this paper, we shall provide a thorough mean-field analysis of \eqref{eq:macroleadfollstrong}, discussing its well-posedness and rigorously deriving it from an Agent Based Model. In order to do this, we will restrict our attention to the case where the transition rates $\alpha_i$ are scalar-valued, that is they depend on the global state of the system at each time $t$, but not explicitly on the position $x$. The usefulness of such a simplification in our analysis is discussed later on in Remark \ref{rem:simplify}.

In order to carry out our analysis, we shall first establish the well-posedness of  \eqref{eq:macroleadfollstrong} by means of a compactness argument in the space of finite positive measures with compact support endowed with the \textit{generalized Wasserstein distance} $\mathcal{W}_g$, see \cite{gw}. To do so, we shall introduce an explicit Euler approximation of the dynamics and show that it converges, as the time step vanishes, to the unique solution of system \eqref{eq:macroleadfollstrong}.

We shall then prove the equivalence between \eqref{eq:macroleadfollstrong} and another system for which we can more easily provide a particle dynamics.
Intuitively, this equivalent system is introduced by defining the measures $(\nu_t,\sigma_t)$ as\footnote{With a slight abuse of notation, from now on we write $\sigma(F),\sigma(L)$ instead of $\sigma(\{F\}),\sigma(\{L\})$.}
$$\nu_t \defin \mu^{F}_t + \mu^{L}_t,\qquad (\sigma_t(F),\sigma_t(L)):=(\mu^F_t(\R^d)/\nu_t(\R^d),\mu^L_t(\R^d)/\nu_t(\R^d)).$$
The idea of the equivalence is that, under suitable hypotheses on the initial data, one can recover $\mu^F_t,\mu^L_t$ from $\nu_t,\sigma_t$ by the relations
\begin{equation}\label{eq: munusigma}
\mu^F_t=\sigma_t(F)\nu_t,\qquad \mu^L_t=\sigma_t(L)\nu_t.
\end{equation}
We shall show that, if the initial datum  $\mu^F_0$, $\mu^L_0$ satisfies \eqref{eq: munusigma} for $t=0$, the system \eqref{eq:macroleadfollstrong} is equivalent to
\begin{align}\label{eq:macronusigma}
\begin{cases}
\partial_t\nu_t = -\diver\Big(\scalarp{K,\nu_t\times\sigma_t}\nu_t\Big),\\
\partial_t \sigma_t = \appstate{\nu_t}{\sigma_t}\sigma_t,
\end{cases}\quad t\in[0,T],
\end{align}
where $\nu_t\times \sigma_t$ is the product measure, the vector field for $\nu_t$ is
\begin{equation}
\scalarp{K,\nu_t\times\sigma_t} \defin \sigma_t(F) K^F\conv \nu_t + \sigma_t(L) K^L\conv \nu_t,
\label{e-vf2}
\end{equation}
and the birth-death transition matrix is
\begin{eqnarray}
\appstate{\nu_t}{\sigma_t} \defin \left[\begin{matrix}
-\alpha_F(\sigma_t(F)\nu_t,\sigma_t(L)\nu_t) & \alpha_L(\sigma_t(F)\nu_t,\sigma_t(L)\nu_t)\\
\alpha_F(\sigma_t(F)\nu_t,\sigma_t(L)\nu_t) & -\alpha_L(\sigma_t(F)\nu_t,\sigma_t(L)\nu_t)
\end{matrix}\right].
\label{e-A}
\end{eqnarray}

The advantage of the measures $\nu_t$ and $\sigma_t$, with respect to $\mu^F_t$ and $\mu^L_t$, is that they are probability measuers over $\R^d$ and $\{F,L\}$, respectively, and we can therefore use a propagation of chaos argument (see \cite{snitzman}) to show that there exists a sequence of stochastic processes whose mean-field limit for $N\rightarrow\infty$ is system \eqref{eq:macronusigma}. We will actually provide such processes in explicit form: we denote them as $(X^{1,N}_t,\state^{1,N}_t),\ldots,(X^{N,N}_t,\state^{N,N}_t)$, where for every $t \in [0,T]$ and $i = 1,\ldots,N$ we have $(X^{i,N}_t,\state^{i,N}_t) \in \R^d\times\{F,L\}$. Setting
\begin{align*}
\nu^N_t \defin \frac1N \sum^N_{i = 1}\delta_{X^{i,N}_t} \quad \text{ and } \quad \sigma^N_t \defin \frac1N \sum^N_{i = 1}\delta_{\state^{i,N}_t},
\end{align*}
then their dynamics is given by 
	\begin{itemize}
	\item $dX^{i,N}_t = \langle K, \nu^N_t\times\sigma^N_t\rangle (X^{i,N}_t)dt\,,$
	\item $\state^{i, N}_t$ obeys a jump process, with conditional transition rates for the realization of $(\nu ^N_t,\sigma^N_t)$ at time $t$, given by
	\begin{itemize}
		\item if $\state^{i,N}_t = F$ then $F \rightarrow L$ with rate $\alpha_F(\nu ^N_t,\sigma^N_t)$,
		\item if $\state^{i,N}_t = L$ then $L \rightarrow F$ with rate $\alpha_L(\nu^N_t,\sigma^N_t)$.
	\end{itemize}
\end{itemize}

By virtue of the equivalence between  \eqref{eq:macroleadfollstrong} and \eqref{eq:macronusigma}, the mean-field limit for $N\rightarrow \infty$ of the above Agent Based Model is system \eqref{eq:macroleadfollstrong}.

The final part of our paper is devoted to numerical implementations of  \eqref{eq:macroleadfollstrong}. 
Three model applications are considered: 
\begin{itemize}
\item consensus dynamics for two populations $\mu_t^F, \mu_t^L$ with  a bounded confidence interaction kernel of  Hegselmann-Krause type;
\item aggregation dynamics with competition among repulsive followers $\mu_t^F$, and attractive  leaders $\mu_t^L$;
\item  the problem of steering a population towards a desired position via leaders' action.
\end{itemize}
In the case of consensus we compare the effect of suitably chosen density-dependent birth and death rates, allowing the system to reach consensus, with constant ones, where instead the system ends up clustering around different states.

For the second case, observe that aggregation models are used to describe several biological phenomena, but also as building brick of social interactions such as crowd motion \cite{cristiani2014multiscale,Bellomo2017crowd}. We show that a controlled generation of leaders, with attraction kernel, is able to confine the whole density, balancing the repulsiveness of followers' interactions.

In the third case, we study the case where leaders' generation is conditioned to achievement of a desired position, in analogy with control problem for pedestrian dynamics   \cite{albi2015invisible,burger2014mean}. Thus leaders' motion influences the followers' density towards a specific goal, whereas followers' interactions are ruled by an aggregation equation. We show that the whole population is steered to the desired state, with the leaders' mass diminishing, and eventually vanishing, as soon as the followers are sufficiently close to the final state.

As a final remark we observe that most of our results can be straightforwardly extended to the case of a \textit{finite hierarchy} of labels $\{L_1,\ldots,L_n\}$ instead of $\{F,L\}$ with transitions given by
	$$L_1 \leftrightarrow L_2 \leftrightarrow \ldots \leftrightarrow L_n \leftrightarrow L_1.$$
	All the proofs would follow along the same lines, though at the expense of notation. Actually, we conjecture that the results of the paper hold true even in the case of a countable number of labels $\{L_k\}_{k\in\N}$, as treated in a simplified scenario in \cite{thai}.

A further issue, which falls for the moment outside the scope of our methods, and is likely to require a finer analysis, is the mean-field derivation of system   \eqref{eq:macroleadfollstrong} in the case where the birth rates take values in a functional space, for example when they explicitly depend on the position $x$. We plan to address these aspects in future contributions.

The structure of the paper is the following. After discussing some measure-theoretical preliminaries in Section \ref{sec:preliminaries},  we turn our attention to system \eqref{eq:macroleadfollstrong}. We introduce a general set of assumptions and prove the existence and uniqueness of solutions, using explicit Euler approximations of the dynamics and a compactness argument in the space of positive measures with bounded mass and compact support, endowed with the generalized Wasserstein distance $\mathcal{W}_g$. This is done in Section \ref{sec:equivalence}, where we also establish a bijection between solutions of \eqref{eq:macroleadfollstrong} and of \eqref{eq:macronusigma} under certain assumptions on the initial data (Proposition \ref{p-equivalenza}). In Section \ref{sec:meanfield} we derive system \eqref{eq:macronusigma}  as mean-field limit of  a particle system which couples a SDE \eqref{stoc-X} for the particles' motion with a nonlinear master equation \eqref{stoc-Y} for their labels. Section \ref{sec-num} is devoted to numerical experiments, which make use of the finite volume scheme discussed in  \ref{sec:FVscheme}. In \ref{sec:assumptions} we introduce some explicit examples of transition functionals which comply with our assumptions and are indeed used in the experiments of Section \ref{sec-num}.

\section{Preliminaries}\label{sec:preliminaries}

Let $X$ be a Radon space; we denote by $\M(X)$ the set of finite positive measures on $X$, and by $\M_c(X)$ the subset of finite positive measures with compact support. The space $\mathcal{P}(X)$ is the subset of $\M(X)$ whose elements are the probability measures on $X$, i.e., those $\mu\in\M(X)$ for which $\mu(X)=1$. The space $\mathcal{P}_p(X)$ is the subset of $\mathcal{P}(X)$ whose elements have finite $p$-th moment, i.e.,
$$\int_X|x|^p\,d\mu(x)<+\infty.$$
We denote by $\mathcal{P}_c(X)$ the subset of $\mathcal{P}(X)$ which consists of all probability measures with compact support. 
We denote the mass of a measure as $|\mu|=\mu(X)$.

If $X_1$ and $X_2$ are Radon spaces, for any\footnote{more in general, also if $\mu$ is a signed Borel measure on $X_1$} $\mu\in\mathcal{M}(X_1)$ and any Borel function $f:X_1\to X_2$, we denote by $f{\#}\mu\in\mathcal{M}(X_2)$ the {\it push-forward of $\mu$ through $f$}, defined by
$$f{\#}\mu(E):=\mu(f^{-1}(E))\qquad\text{for every Borel set $E$ of }X_2.$$
In particular, if one considers the projection operators $\pi_1$ and $\pi_2$ defined on the product space $X_1\times X_2$, for every $\rho\in\mathcal{P}(X_1\times X_2)$ we call {\it first} (resp., {\it second}) {\it marginal} of $\rho$ the probability measure $\pi_{1}{\#}\rho$ (resp., $\pi_{2}{\#}\rho$). Given $\mu\in\mathcal{P}(X_1)$ and $\nu\in\mathcal{P}(X_2)$, we denote by $\Gamma(\mu,\nu)$ the subset of all probability measures in $\mathcal{P}(X_1\times X_2)$ with first marginal $\mu$ and second marginal $\nu$.

We denote the weak convergence of measures as follows:
$$\mu_n\weak \mu\mbox{~~~~ when~~~~ for all~$f\in \mathcal{C}^\infty_c(\R^d)$~ it holds}\int f\,d\mu_n\to\int f\,d\mu.$$

\subsection{The Wasserstein distance}

In this section, we recall the definition of Wasserstein distance, as well as some of its useful properties.

\begin{definition}[Wasserstein distance]
For every $\mu,\nu \in \mathcal{P}_p(X)$ we define
\begin{equation}\label{e_Wp}
\W_p(\mu,\nu)\defin\inf\left\{\int_{X^2}|x-y|^p\,d\rho(x,y)\ :\ \rho\in\Gamma(\mu,\nu)\right\}^{1/p}.
\end{equation}
\end{definition}
\begin{remark}
We denote by $\Gamma_o(\mu,\nu)$ the set of optimal plans for which the infimum in \eqref{e_Wp}  is attained, i.e.,
$$\rho\in\Gamma_o(\mu,\nu)\iff\rho\in\Gamma(\mu,\nu)\text{ and }\int_{\R^{2d}}|x-y|^p\,d\rho(x,y)=\W^p_p(\mu,\nu).$$
It is well-known that $\Gamma_o(\mu,\nu)$ is non-empty for every $(\mu,\nu)\in\mathcal{P}_p(X)\times\mathcal{P}_p(X)$, hence the infimum in \eqref{e_Wp} is actually a minimum, see \cite{villani}.
\end{remark}

\begin{remark}
    Under suitable conditions (see \cite[Theorem 5.9]{villani}), by Kantorovich-Rubinstein duality we have
  \begin{align}\label{eq: Kantorovich}  
\W_1(\mu,\nu) = \sup\left\{\int_{X}\varphi(x)d(\mu-\nu)(x) : \Lip(\varphi)\leq1\right\}.
\end{align}
 In analogy with $\Gamma_o(\mu,\nu)$, we denote by $\Lambda(\mu,\nu)$ the set of Lipschitz maps $\varphi:X\rightarrow\R$ with $\Lip(\varphi)\leq1$, and by $\Lambda_o(\mu,\nu)$ the subset of $\Lambda(\mu,\nu)$ for which the above supremum is attained, i.e.,
    $$\varphi\in\Lambda_o(\mu,\nu)\iff\varphi\in\Lambda(\mu,\nu)\text{ and }\int_{X}\varphi(x)d(\mu-\nu)(x)=\W_1(\mu,\nu).$$
    Then, by \cite[Theorem 5.9]{villani}, it follows that $\Lambda_o(\mu,\nu)$ is non-empty.
\end{remark}

We finally recall the following result, see e.g. \cite{villani}. 
\begin{proposition} Wasserstein distances are ordered, in the sense that $1 \leq p_1\leq p_2$ implies
\begin{equation*}
W_{p_1}(\mu,\nu)\leq W_{p_2}(\mu,\nu). 
\end{equation*}
\end{proposition}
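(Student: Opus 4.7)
The plan is to exploit the fact that, since $\mu,\nu\in\mathcal{P}_{p_2}(X)\subset\mathcal{P}_{p_1}(X)$, every admissible transport plan $\rho\in\Gamma(\mu,\nu)$ is itself a probability measure on $X\times X$. This probabilistic structure is precisely what is needed to apply Jensen's inequality (equivalently, Hölder with exponents $p_2/p_1$ and its conjugate) to the nonnegative integrand $|x-y|^{p_1}$ against the convex function $t\mapsto t^{p_2/p_1}$, which is well-defined since $p_2/p_1\geq 1$.

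First I would fix an arbitrary $\rho\in\Gamma(\mu,\nu)$ and write, via Jensen,
\begin{equation*}
\int_{X^2}|x-y|^{p_1}\,d\rho(x,y)\;\leq\;\left(\int_{X^2}|x-y|^{p_2}\,d\rho(x,y)\right)^{p_1/p_2}.
\end{equation*}
Taking the $p_1$-th root of both sides yields
\begin{equation*}
\left(\int_{X^2}|x-y|^{p_1}\,d\rho(x,y)\right)^{1/p_1}\;\leq\;\left(\int_{X^2}|x-y|^{p_2}\,d\rho(x,y)\right)^{1/p_2}.
\end{equation*}

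Next I would specialize to $\rho\in\Gamma_o(\mu,\nu)$ optimal for the $W_{p_2}$ problem, whose existence is guaranteed by the remark immediately preceding the statement. The right-hand side then equals $W_{p_2}(\mu,\nu)$, while the left-hand side is an upper bound for $W_{p_1}(\mu,\nu)$, since $\rho$ is merely admissible (not necessarily optimal) in the $W_{p_1}$ minimization. Chaining these two observations gives the desired inequality.

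There is no real obstacle here: the argument is a one-line application of Jensen to a fixed optimal plan. The only point worth being careful about is that the same $\rho$ is used on both sides of the inequality \emph{before} passing to the infimum, so that one compares the two costs against a common reference measure; taking infima separately and then comparing would not directly work, but passing the inequality through a single optimal $\rho$ for the larger exponent $p_2$ is the standard and correct route.
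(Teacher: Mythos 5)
Your proof is correct. The paper itself gives no proof of this proposition, merely citing Villani's textbook, and the argument you present (apply Jensen's inequality to a $W_{p_2}$-optimal plan $\rho$, then observe that the same $\rho$ is admissible for the $W_{p_1}$ minimization) is precisely the standard argument found in that reference.
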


\subsection{Solutions of transport equations}

We now recall the precise definition of solutions to systems \eqref{eq:macroleadfollstrong} and \eqref{eq:macronusigma}. Indeed, a solution of system \eqref{eq:macroleadfollstrong} must be interpreted in the sense of distributions, as follows.

\begin{definition}[Solution of system \eqref{eq:macroleadfollstrong}]\label{def:solmacroleadfoll}
	Let $(\overline{\mu}^F,\overline{\mu}^L)\in\mathcal{M}_c(\R^d)\times \mathcal{M}_c(\R^d)$ be given, as well as $\mu^F,\mu^L:[0,T]\rightarrow\mathcal{M}_c(\R^d)$. We say that the couple $(\mu^F_t,\mu^L_t)$ is a solution of system \eqref{eq:macroleadfollstrong} with initial datum $(\overline{\mu}^F,\overline{\mu}^L)$ when
	\begin{enumerate}
		\item $\mu^F_0 = \overline{\mu}^F$ and $\mu^L_0 = \overline{\mu}^L$;
		\item for each $i\in\{F,L\}$, the function $t\to \mu^i_t$ is continuous with respect to the topology of weak convergence of measures;
		\item there exists $R_T > 0$ such that $\bigcup_{t \in [0,T]}\supp(\mu^i_t)\subseteq B(0,R_T)$ for every $i \in \{F,L\}$;
		\item for every $\varphi \in \mathcal{C}^1_c(\R^d)$ and $i \in \{F,L\}$ it holds
		\begin{align*}\begin{split}
		\frac{d}{dt}\int_{\R^d}\varphi(x)d\mu^i_t(x)& = \int_{\R^d}\nabla\varphi(x)\cdot\left[\sum_{j\in\{F,L\}}(K^{j}\conv\mu^j_t)(x)\right]d\mu^i_t(x) \\
		&\;\;\;-\alpha_i(\mu^F_t,\mu^L_t)\int_{\R^d}\varphi(x)d\mu^i_t(x)+\alpha_{\neg i}(\mu^F_t,\mu^L_t)\int_{\R^d}\varphi(x)d\mu^{\neg i}_t(x),
		\end{split}
		\end{align*}
		for almost every $t\in[0,T]$, with
		\begin{align*}
		\neg i \defin \begin{cases}
		L & \text{ if } i = F,\\
		F & \text{ if } i = L.
		\end{cases}
		\end{align*}
	\end{enumerate}
\end{definition}

Similarly, we introduce the concept of solution of system \eqref{eq:macronusigma}.

\begin{definition}[Solution of system \eqref{eq:macronusigma}]\label{def:solnusigma}
	Let $(\overline{\nu},\overline{\sigma})\in\mathcal{M}_c(\R^d)\times\mathcal{P}(\{F,L\})$ be given, as well as $\nu:[0,T]\rightarrow\mathcal{M}_c(\R^d)$ and $\sigma:[0,T]\rightarrow\mathcal{P}(\{F,L\})$. We say that $(\nu_t,\sigma_t)$ is a solution of system \eqref{eq:macronusigma} with initial datum $(\overline{\nu},\overline{\sigma})$ when
	\begin{enumerate}
		\item $\nu_0 = \overline{\nu}$ and $\sigma_0 = \overline{\sigma}$;
		\item the function $t\to \nu_t$ is continuous with respect to the topology of weak convergence of measures, while $t\to(\sigma_t(F),\sigma_t(L))$ is absolutely continuous\footnote{It is sufficient to prove absolute continuity of one component only, since $\sigma_t(F)+\sigma_t(L)=1$.} ;
		\item there exists $R_T > 0$ such that $\cup_{t \in [0,T]}\supp(\nu_t)\subseteq B(0,R_T)$;
		\item $(\nu_t,\sigma_t)$ satisfy 
		\begin{align*}
		\dot{\sigma}_t(i) = \appstate{\nu_t}{\sigma_t}\sigma_t(i)
		\end{align*}
		for almost every $t\in[0,T]$, with $\appstate{\nu_t}{\sigma_t}$ given in \eqref{e-A}, as well as
				\begin{align*}
		\frac{d}{dt}\int_{\R^d}\varphi(x)d\nu_t(x) = \int_{\R^d}\nabla\varphi(x)\cdot\scalarp{K,\nu_t\times\sigma_t}(x)d\nu_t(x)
		\end{align*}
		for every $\varphi \in \mathcal{C}^1_c(\R^d)$.
	\end{enumerate}
\end{definition}

\begin{remark}\label{rem:simplify}
Throughout the paper it is assumed that the transition rates encoded by the matrix $A_{\nu,\sigma}$ only depend on the global state of the system and not on the position $x$. While being already useful at the level of deducing existence of solutions of  \eqref{eq:macroleadfollstrong}, this restriction will be needed in order to show equivalence between solutions of \eqref{eq:macroleadfollstrong} and \eqref{eq:macronusigma}, provided the initial datum is suitably chosen, satisfying Assumption \ref{ass:posmeas0} below. This will be apparent in the proof of Proposition \ref{p-equivalenza}. As already discussed in the Introduction, this equivalence is a crucial step for the mean field derivation in Section \ref{sec:meanfield}.
\end{remark}

\subsection{The method of characteristics}

In this section, we recall the method of characteristics to find solutions of transport equations. In particular, we recall the connection between the solutions of an ordinary differential equation with vector field $v$ and the  solution to transport equations as the evolution of the corresponding probability distribution.

We start with the classical definitions of Carath{\'e}odory functions and solutions.
\begin{definition}
A function $g:[0,T]\times \Omega \to \R^d$ is a Carath{\'e}odory function if
\begin{enumerate}
\item For all $t\in[0,T]$, the application $x\mapsto g(t,x)$ is Lipschitz.
\item For all $x\in\mathbb{R}^d$, the application $t\mapsto g(t,x)$ is measurable.
\item There exists $M>0$ such that $|g(t,x)|\leqslant M(1+|x|)$ for all $t,x$.
\end{enumerate}

A Carath{\'e}odory solution of
\begin{equation}\label{cara}
\dot y(t)=g(t, y(t)) \quad \text{ for } t\in[0,T],
\end{equation}
is an absolutely continuous function $y: [0,T]\to \R^d$ which satisfies \eqref{cara}  a.e.\ in $[0,T]$.
\end{definition} 
If the Lipschitz constant $L_t$ of the function $g(t,\cdot)$ belongs to $L^1(0,T)$,  existence and uniqueness of  Carath{\'e}odory solutions to \eqref{cara} can be shown, see e.g. \cite{filipov}. From now on, we denote by $\Phi^g_t$ the flow of \eqref{cara}, i.e. the map $x_0\mapsto\Phi^g_t(x_0)$ that associates to each initial data $x_0$ the corresponding solution of \eqref{cara} at time $t$. Carath\'eodory solutions of finite dimensional systems and weak solutions of continuity equations are intimately related, as the following classical result shows.

\begin{lemma}\label{le:equivPDEODEtransp}
	Let $v:[0,T]\times\R^d\rightarrow\R^d$ be a Carath\'eodory function and $X:[0,T]\rightarrow\R^d$ be a Carath\'eodory solution of
	\begin{eqnarray*}
	\begin{cases}
	\dot{x} = v_t(x), \\
	x(0) = x_0.
	\end{cases}
	\end{eqnarray*}
	Then $\mu_t = {\Phi^v_t}\#\mu_0$ is the unique weak solution of
	\begin{equation}
	\begin{cases}
	\partial_t\mu_t &= - \diver(v_t\mu_t) , \\
	\mu(0) &= \mu_0.
	\end{cases}\label{e-pde}
	\end{equation}
	 As a consequence, if $\supp(\mu_0)\subset B(0,R)$, then for each $t>0$ it holds
\begin{eqnarray}
\supp(\mu_t)\subset B(0,R+t\|v\|_{\mathcal{C}^0}).\label{e-evolsupp}
\end{eqnarray}

Moreover, consider the inhomogeneous transport equation
\begin{equation}
	\begin{cases}
	\partial_t\mu_t &= - \diver(v_t\mu_t) +s_t, \\
	\mu(0) &= \mu_0.
	\end{cases}\label{e-pde-inhom}
	\end{equation}
for $s_t$ being a measurable family (with respect to the weak topology of meaures) of signed Borel measures such that there exist $M_s,R_s$ with 
$$|s_t^+|+|s_t^-|\leq M_s,\qquad \supp(s_t)\subset B(0,R_s)$$
for all $t\in[0,T]$.

Then, there exists a unique solution to \eqref{e-pde-inhom}, that satisfies the Duhamel's formula
\begin{equation}
\mu_t=\Phi^v_t\#\mu_0+\int_0^t\Phi^v_{(\tau,t)}\#s_\tau\,d\tau.
\label{e-duhamel}
\end{equation}
Here, $\Phi^v_{(\tau,t)}$ is the flow of the non-autonomous vector field $v_t$ starting at time $\tau$, i.e. the function $x_\tau \mapsto \Phi^v_{(\tau,t)}(x_\tau)$ that associates to $x_\tau$ the solution at time $t$ of 
	\begin{eqnarray*}
	\begin{cases}
	\dot{x} = v_t(x), \\
	x(\tau)  = x_\tau.
	\end{cases}
	\end{eqnarray*}
	As a consequence, if $\supp(\mu_0)\subset B(0,R)$, then for each $t>0$ it holds
\begin{eqnarray}
\supp(\mu_t)\subset B(0,\max\{R,R_s\}+t\|v\|_{\mathcal{C}^0}).\label{e-evolsuppinhom}
\end{eqnarray}
\end{lemma}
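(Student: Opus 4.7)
The plan is to split the lemma into three pieces: the homogeneous case, the support estimates, and the inhomogeneous Duhamel formula treated via linearity.

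For the homogeneous equation, I would show directly that $\mu_t = \Phi^v_t {\#} \mu_0$ is a weak solution by testing against $\varphi \in \mathcal{C}^1_c(\R^d)$. By the push-forward change of variables, $\int \varphi \, d\mu_t = \int \varphi(\Phi^v_t(x_0)) \, d\mu_0(x_0)$. Since $\varphi$ has compact support and the flow has sublinear growth in $x_0$, dominated convergence justifies differentiation in $t$ under the integral, and the chain rule with $\partial_t \Phi^v_t(x_0) = v_t(\Phi^v_t(x_0))$ a.e. produces $\int \nabla\varphi \cdot v_t \, d\mu_t$, i.e. the weak form of $-\diver(v_t \mu_t)$. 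The support bound \eqref{e-evolsupp} is an immediate consequence of $|\Phi^v_t(x_0) - x_0| \leq \int_0^t \|v_\tau\|_{\mathcal{C}^0} \, d\tau \leq t\|v\|_{\mathcal{C}^0}$ applied to $x_0 \in B(0,R) \supset \supp(\mu_0)$.

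Uniqueness I would obtain by the classical duality argument with the backward flow: if $\mu_t, \tilde{\mu}_t$ are two solutions with the same initial datum, their difference $\eta_t$ solves the equation with zero initial datum. Fix $T^\ast \in (0,T]$ and $\psi \in \mathcal{C}^1_c(\R^d)$; set $\varphi(t,x) = \psi(\Phi^v_{(t,T^\ast)}(x))$. This function is $\mathcal{C}^1$ in $x$ (by standard regularity of flows with $L^1$ Lipschitz constant in time) and satisfies $\partial_t \varphi + v_t \cdot \nabla \varphi = 0$ a.e.; plugging it into the weak formulation yields $\int \psi \, d\eta_{T^\ast} = 0$ for arbitrary $\psi$, hence $\eta_{T^\ast} = 0$. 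A standard mollification of $v$ in time justifies the manipulation in the Carath\'eodory setting.

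For the inhomogeneous equation I would exploit linearity. I decompose $s_t = s_t^+ - s_t^-$ via Hahn--Jordan; by hypothesis these are measurable families with uniformly bounded total variation and supports in $B(0,R_s)$. Then $\tau \mapsto \Phi^v_{(\tau,t)} {\#} s_\tau$ is a measurable family of signed measures with support in $B(0, R_s + (t-\tau)\|v\|_{\mathcal{C}^0})$ and uniformly bounded mass, so \eqref{e-duhamel} defines a signed Borel measure via its pairing with $\mathcal{C}^0_c(\R^d)$. To verify that this $\mu_t$ solves \eqref{e-pde-inhom}, I would test against $\varphi \in \mathcal{C}^1_c(\R^d)$, use Fubini to swap time integrations, apply the homogeneous computation to each summand, and invoke a Leibniz rule at the moving endpoint $\tau = t$, which produces exactly the source contribution $\int \varphi \, ds_t$. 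Uniqueness reduces to the homogeneous case by subtraction. The support estimate \eqref{e-evolsuppinhom} is immediate: any mass present at time $t$ originates either from $\mu_0$ (displaced by at most $t\|v\|_{\mathcal{C}^0}$) or from some $s_\tau$ with $\tau \in [0,t]$ (displaced by at most $(t-\tau)\|v\|_{\mathcal{C}^0}$), giving overall $B(0, \max\{R,R_s\} + t\|v\|_{\mathcal{C}^0})$.

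The principal technical obstacle is the rigorous treatment of the Duhamel integral and the exchange of $\partial_t$ with $\int_0^t \cdots d\tau$: one must confirm that $\tau \mapsto \int \varphi \, d(\Phi^v_{(\tau,t)} {\#} s_\tau)$ is measurable for every $\varphi \in \mathcal{C}^0_c(\R^d)$ (which follows from the measurability of $\tau \mapsto s_\tau$ combined with the joint continuity of $(\tau,x) \mapsto \Phi^v_{(\tau,t)}(x)$ inherited from Carath\'eodory regularity), and then justify differentiation in $t$ of a measure-valued integral with a moving upper limit. These points are standard but are the places where care is needed; everything else is either a direct computation or a consequence of linearity.
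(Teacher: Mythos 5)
Your proposal is correct and follows essentially the same route as the paper's proof: push-forward via the flow map for existence, the standard backward-flow duality for uniqueness of the linear continuity equation (the paper simply cites \cite{AGS} for this), the same displacement estimate $|\Phi^v_t(x_0)-x_0|\le t\|v\|_{\mathcal{C}^0}$ for the support bounds, and verification of Duhamel's formula together with subtraction for the inhomogeneous part. The main difference is one of granularity: the paper delegates the homogeneous existence and uniqueness to references and calls the Duhamel verification a ``direct computation,'' whereas you spell out the change of variables, the dominated-convergence/differentiation step, the time-mollification needed in the Carath\'eodory setting, and the Leibniz rule at the moving endpoint, all of which are the right technical ingredients.
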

\begin{proof}
	For the existence of a solution to \eqref{e-pde}, which is the push-forward of the initial datum via the flow map, see e.g. \cite{villani}. Uniqueness comes from standard arguments for the linear continuity equation, see e.g. \cite{AGS}.

We now prove \eqref{e-evolsupp}. For a given $t>0$, consider a test function $\varphi$ with compact support, such that $\varphi\equiv 0$ on $B(0,R+t\|v\|_{\mathcal{C}^0})$. It then holds
\begin{equation}
\label{e-intsupp}
\int_{\R^d} \varphi(x)\,d(\Phi^v_t\#\eta_0)(x) =\int_{\R^d} \varphi(\Phi^v_t(x))\,d\eta_0(x).
\end{equation}
Recall the elementary estimate for ordinary differential equations $$|\Phi^v_t(x)-x|=\left|\int_0^t v(s,x(s))\,ds\right|\leq t\|v\|_{\mathcal{C}^0}.$$ Since for each $x\in\supp(\mu(0))$ it holds $|x|\leq R$, then $\varphi(\Phi^v_t(x))=0$. Thus, the integral in \eqref{e-intsupp} is zero. Since this holds for any test function with support outside $B(0,R+t\|v\|_{\mathcal{C}^0})$, this implies that $\eta_t$ is supported in $B(0,R+t\|v\|_{\mathcal{C}^0})$.

The proof of existence for the inhomogeneous case is similar. Duhamel's formula is a re-writing of the method of variations of constants, that can be verified with direct computations. Uniqueness can be proved with the standard method: the difference between two solutions solves \eqref{e-pde} with $\mu_0\equiv 0$, then its unique solution is $\mu_t\equiv 0$. The proof for \eqref{e-evolsuppinhom} follows the proof for \eqref{e-evolsupp}.
\end{proof}

\subsection{The Generalized Wasserstein distance}

The main technical issue about the transport equation \eqref{eq:macroleadfollstrong} is that it mixes two different phenomena: on one side the non-local dynamics given by convolutions $K^i\conv \mu^i$; on the other side, sources and sink that make the total mass of $\mu^i$ non-constant.

It has been shown in several examples that the Wasserstein distance is a powerful tool to deal with transport equation with non-local vector fields, see
e.g. \cite{ambrosio,pedestrian,dobrushin79,golseparticle,villani}. Neverthelss, the Wasserstein distance is defined between measures with the same mass, hence it is not useful for problems in which the mass varies in time. This issue recently led to the development of a series of different generalizations of the Wasserstein distance to measures with different masses. See e.g. \cite{Chizat2018,KMV16,LMS16,gw}.

In this article, we choose to use the generalized Wasserstein distance, that has been introduced in \cite{gw,gw2}. Indeed, it has been already proved in \cite{gw} that, under suitable hypotheses written in terms of the generalized Wasserstein distance, transport equations with both non-local velocities and source terms admit existence and uniqueness of the solution.

We now recall the definition of the generalized Wasserstein distance, together with some key properties.
\begin{definition}
Let $\mu,\nu\in\Mu$ be two measures. Given $a,b>0$ and $p\geq 1$, we define the functional
\begin{equation}
\gwbase^{a,b,p}(\mu,\nu):=\inf_{\tilde\mu,\tilde\nu\in\Mu,\,|\tilde\mu|=|\tilde\nu|}\Pt{a^p\Pt{|\mu-\tilde\mu|+|\nu-\tilde\nu|}^p+b^pW_p^p(\tilde\mu,\tilde\nu)}^{1/p}.\label{e-gw}
\end{equation}
\end{definition}

\begin{proposition} \label{p-gw}
The following properties hold:
\begin{enumerate}
\item The functional $\gwbase^{a,b,p}$ is a distance on $\Mu$.\\
\item The distance $\gwbase^{a,b,p}$ metrizes the weak convergence on compact sets, i.e. given $\mu_n,\mu$ with  $\supp(\mu_n),\supp(\mu)\subset B_R(0)$ it holds
$$\mu_n\rightharpoonup \mu\mbox{~~~~if and only if~~~~}\gwbase^{a,b,p}(\mu_n,\mu)\to 0.$$
\item The space $\Mu$ is complete with respect to $\gwbase^{a,b,p}$.
\item Let $v_t,w_t$ be two Lipschitz vector fields, with $L$ a Lipschitz constant for both $v_t,w_t$. It then holds:
\begin{eqnarray}
&&\hspace{-18mm}\gwbase^{a,b,p}(\mu,\Phi^v_t\#\mu)\leq b \sup_{\tau\in[0,t]}\{\|v_\tau\|_{\mathcal{C}^0}\} t\,|\mu|\label{e-gw1} \\
&&\hspace{-18mm}\gwbase^{a,b,p}(\Phi^{v}_{t}\#\mu,\Phi^{w}_{t}\#\nu)\leq e^{\frac{p+1}{p} L t} \gwbase^{a,b,p}(\mu,\nu)+
\mbox{${\tiny |\mu|\frac{be^{L t/p}(e^{Lt}-1)}{L} }$}\sup_{\tau\in[0,t]}\{\|v_\tau-w_\tau\|_{\mathcal{C}^0}\}.\label{e-gw2}
\end{eqnarray}
\item It holds 
\begin{eqnarray}
\gw{\mu,\nu}\leq |\mu|+|\nu|.
\label{e-gwmass}
\end{eqnarray}
\end{enumerate}
\end{proposition}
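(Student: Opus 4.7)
The statement gathers five structural properties of $\gwbase^{a,b,p}$, and my plan is to treat each in turn, exploiting the decomposition built into \eqref{e-gw}: a total-variation penalty for the mass adjustment $(\mu\to\tilde\mu,\nu\to\tilde\nu)$ and a classical $W_p$ transport between the equal-mass reductions $\tilde\mu,\tilde\nu$. Property (5) is immediate by choosing $(\tilde\mu,\tilde\nu)=(0,0)$ as a competitor in \eqref{e-gw}, which yields mass defect $|\mu|+|\nu|$ and vanishing Wasserstein term.

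For (1), symmetry and non-negativity are immediate from \eqref{e-gw}; identity of indiscernibles follows because a vanishing infimum forces $|\mu-\tilde\mu_n|+|\nu-\tilde\nu_n|\to 0$ and $W_p(\tilde\mu_n,\tilde\nu_n)\to 0$ simultaneously, hence $\mu=\nu$. The delicate point is the triangle inequality: given $\mu_1,\mu_2,\mu_3$, I would take near-optimal decompositions $(\tilde\mu_1,\tilde\mu_2^{(12)})$ for $(\mu_1,\mu_2)$ and $(\tilde\mu_2^{(23)},\tilde\mu_3)$ for $(\mu_2,\mu_3)$, then replace both $\tilde\mu_2^{(12)}$ and $\tilde\mu_2^{(23)}$ by their common lower bound $\tilde\mu_2:=\tilde\mu_2^{(12)}\wedge\tilde\mu_2^{(23)}$, restricting $\tilde\mu_1,\tilde\mu_3$ accordingly via Radon--Nikodym densities. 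The Wasserstein parts then combine by the triangle inequality for $W_p$, while the mass defects combine by the triangle inequality in total variation.

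For (2), both directions use that $W_p$ metrizes weak convergence on equal-mass measures with support in a fixed compact set. If $\mu_n\weak\mu$ with supports in $B_R(0)$, taking $\tilde\mu=\tilde\nu=\mu_n\wedge\mu$ as a competitor bounds the mass defect by $|\mu_n|+|\mu|-2|\mu_n\wedge\mu|\to 0$, while the Wasserstein term vanishes. The converse follows by testing near-optimal decompositions against $\varphi\in\mathcal{C}^\infty_c(\R^d)$. For (3), a Cauchy sequence has uniformly bounded total mass by the triangle inequality and (5) and eventually tight supports, so one extracts a limit by a Prokhorov-type compactness argument combined with the completeness of $W_p$ on equal-mass measures.

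The main obstacle is the stability estimate \eqref{e-gw2} in (4). Starting from a near-optimal pair $(\tilde\mu,\tilde\nu)$ for $(\mu,\nu)$, I would push each along its own flow, $\Phi^v_t$ and $\Phi^w_t$ respectively; since flows are bijections the mass defect is preserved, while the $W_p$ part requires a careful Gr\"onwall-type computation on the optimal plan between $\tilde\mu$ and $\tilde\nu$, tracking simultaneously the divergence of the two flows (which contributes the factor $e^{\frac{p+1}{p}Lt}$) and the uniform discrepancy $\|v_\tau-w_\tau\|_{\mathcal{C}^0}$. The first estimate \eqref{e-gw1} is simpler: the admissible competitor $(\mu,\Phi^v_t\#\mu)$ has zero mass defect, reducing the generalized distance to $bW_p(\mu,\Phi^v_t\#\mu)$, which is controlled by coupling $\mu$ to its flow via the map $x\mapsto(x,\Phi^v_t(x))$ and using the ODE bound $|\Phi^v_t(x)-x|\leq t\|v\|_{\mathcal{C}^0}$. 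Since all five items have been established in \cite{gw,gw2}, the proof here would ultimately defer to those references.
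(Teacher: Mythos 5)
The paper's own proof of this proposition is just a citation to \cite{gw,gw2}, and your sketch, which also ultimately defers there, is a mostly reasonable reconstruction of the underlying ideas. There is however a genuine error in your argument for the forward implication of property~(2). You propose the competitor $\tilde\mu=\tilde\nu=\mu_n\wedge\mu$ and claim the resulting mass defect $|\mu_n|+|\mu|-2|\mu_n\wedge\mu|$ tends to zero. For non-negative measures this quantity is exactly the total variation norm of $\mu_n-\mu$, and weak convergence with uniformly compact supports does \emph{not} imply total variation convergence: with $\mu_n=\delta_{1/n}$ and $\mu=\delta_0$ one has $\mu_n\weak\mu$, $\supp\mu_n\subset[0,1]$, but $\mu_n\wedge\mu=0$ for every $n$, so the alleged defect is identically $2$. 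By choosing $\tilde\mu=\tilde\nu$ you discard the $W_p$ component altogether, yet the $W_p$ part is precisely what makes the metrization work. The device used in \cite{gw,gw2} (and reflected in Theorem~\ref{t-flat}) is instead to equalize masses by a small rescaling, e.g.\ $\tilde\mu_n=\mu_n$ and $\tilde\nu_n=\tfrac{|\mu_n|}{|\mu|}\,\mu$ when $|\mu|>0$, so the total variation penalty is $\big||\mu_n|-|\mu|\big|\to 0$ while the $W_p$ term between the two equal-mass measures vanishes by classical Wasserstein metrization of weak convergence on compact sets.

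The remainder of your sketch is fine at the level it is pitched: property~(5) via the competitor $(\tilde\mu,\tilde\nu)=(0,0)$ (which, for the $\gwbase^{1,1,1}$ instance in \eqref{e-gwmass}, indeed gives $|\mu|+|\nu|$); the triangle inequality via a meet construction on the intermediate measure; \eqref{e-gw1} via a zero-mass-defect competitor and the identity coupling $x\mapsto(x,\Phi^v_t(x))$; and \eqref{e-gw2} via a Gr\"onwall estimate on the pushed-forward plan, tracking both the flow divergence and the vector-field discrepancy. For property~(3) you should be a bit more careful than ``eventually tight supports,'' since a $\gwbase$-Cauchy sequence in $\Mu$ need not be uniformly compactly supported, but as you defer to \cite{gw,gw2} anyway this is a smaller issue than the error in (2).
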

\begin{proof}See \cite{gw,gw2}.
\end{proof}

We now recall a result equivalent to the Kantorovich-Rubinstein duality for the generalized Wasserstein distance $\gwbase^{1,1,1}$. It states that it coincides with the so-called flat distance, see e.g. \cite{dudley}.
\begin{theorem} \label{t-flat}
Let $\mu,\nu\in\Mu$. Then 
\begin{eqnarray*}
\gwbase^{1,1,1}(\mu,\nu)=\sup\left\{\int f d(\mu-\nu)\ |\  \|f\|_\infty\leq 1,\ \mathrm{Lip}(f)\leq 1\right\}.
\end{eqnarray*}
As simple consequences, for $\lambda,\bar\lambda>0$ it holds
\begin{eqnarray}
&&\W_g^{1,1,1}(\lambda\mu,\bar\lambda\mu)=|\lambda-\bar\lambda|\,|\mu|\label{e-diff}\\
&&\W_g^{1,1,1}(\lambda\mu,\lambda\nu)=\lambda\W_g^{1,1,1}(\mu,\nu).\label{e-omo}
\end{eqnarray}
\end{theorem}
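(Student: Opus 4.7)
The plan is to prove the main identity by establishing both inequalities in the duality, and then to read off the scaling consequences \eqref{e-diff} and \eqref{e-omo} directly from the resulting flat-norm representation.

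\textbf{Upper bound (sup $\leq$ $\gwbase^{1,1,1}$).} Given any test function $f$ with $\|f\|_\infty\leq 1$ and $\Lip(f)\leq 1$, and any admissible pair $(\tilde\mu,\tilde\nu)$ in the definition \eqref{e-gw} (so $|\tilde\mu|=|\tilde\nu|$), I would split
$$\int f\,d(\mu-\nu)=\int f\,d(\mu-\tilde\mu)+\int f\,d(\tilde\mu-\tilde\nu)-\int f\,d(\nu-\tilde\nu).$$
The first and last terms are bounded in absolute value by $|\mu-\tilde\mu|$ and $|\nu-\tilde\nu|$, respectively, using $\|f\|_\infty\leq 1$ against the total variation of the signed measures. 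The middle term, thanks to $|\tilde\mu|=|\tilde\nu|$, is bounded by $W_1(\tilde\mu,\tilde\nu)$ via Kantorovich--Rubinstein \eqref{eq: Kantorovich} (applied after normalizing both measures to probabilities and multiplying back by the common mass). Summing these estimates, taking the infimum over $(\tilde\mu,\tilde\nu)$, and then the supremum over $f$, yields the inequality.

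\textbf{Lower bound ($\gwbase^{1,1,1} \leq$ sup).} This is the main technical obstacle and is a genuine duality statement. The strategy I would use is a mass-padding trick: augment $\mu$ and $\nu$ with atoms at an added point at infinity so that the enlarged measures $\bar\mu,\bar\nu$ share the same total mass, and consider the classical Kantorovich problem with truncated cost $c(x,y)=\min(|x-y|,2)$ (with the cost to/from the atom at infinity equal to $2$). A direct bookkeeping identifies the optimal transport cost with $\gwbase^{1,1,1}(\mu,\nu)$: mass sent to infinity corresponds to the ``destroyed/created'' parts $|\mu-\tilde\mu|$, $|\nu-\tilde\nu|$, while the rest transports between $\tilde\mu^*$ and $\tilde\nu^*$. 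The Kantorovich duality for this cost then produces a test function which is simultaneously $1$-Lipschitz (inherited from the Lipschitz continuity of $c$) and of oscillation at most $2$ (inherited from the truncation); after a constant shift it satisfies $\|f\|_\infty\leq 1$ together with $\Lip(f)\leq 1$, matching the flat-norm supremum. An alternative route is to extract a minimizer $(\tilde\mu^*,\tilde\nu^*)$ by weak compactness (tightness comes from $\tilde\mu\leq\mu$, $\tilde\nu\leq\nu$ in an optimizing sequence) and lower semicontinuity of $W_1$ for equal-mass pairs, then pick $\varphi^*\in\Lambda_o(\tilde\mu^*,\tilde\nu^*)$ and verify, via the first-order optimality of the mass splitting, that $\varphi^*$ can be shifted to satisfy $\|\varphi^*\|_\infty\leq 1$.

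\textbf{Consequences \eqref{e-diff} and \eqref{e-omo}.} Both are immediate once the flat-norm representation is in hand. For \eqref{e-diff}, the constant function $f\equiv\sgn(\lambda-\bar\lambda)$ is admissible (it satisfies $\Lip(f)=0\leq 1$ and $\|f\|_\infty\leq 1$) and attains $\int f\,d((\lambda-\bar\lambda)\mu)=|\lambda-\bar\lambda||\mu|$, while for any admissible $f$ the bound $|\int f\,d((\lambda-\bar\lambda)\mu)|\leq|\lambda-\bar\lambda||\mu|$ follows from $\|f\|_\infty\leq 1$, so the supremum equals $|\lambda-\bar\lambda||\mu|$. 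For \eqref{e-omo}, linearity gives $\int f\,d(\lambda\mu-\lambda\nu)=\lambda\int f\,d(\mu-\nu)$ for every $f$, and since the admissible class is unchanged, taking the supremum multiplies by $\lambda$.
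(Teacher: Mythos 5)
The paper does not actually prove this theorem; it simply cites \cite{gw2}, so there is no in-paper argument to compare against and your blind attempt is doing strictly more work than the text itself.

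Your upper bound is correct: the three-term splitting, the $\|f\|_\infty\le 1$ bound on the total-variation pieces, and the Kantorovich--Rubinstein bound $\int f\,d(\tilde\mu-\tilde\nu)\le W_1(\tilde\mu,\tilde\nu)$ after normalising by the common mass are exactly the right estimates, and taking infimum then supremum reproduces $\gwbase^{1,1,1}$. Your derivations of \eqref{e-diff} and \eqref{e-omo} from the flat-norm representation are also clean and correct.

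For the lower bound, the strategy you describe (pad the two measures with atoms at an added ``cemetery'' point so that the masses balance, then identify $\gwbase^{1,1,1}$ with the classical Kantorovich cost for the truncated metric $c(x,y)=\min(|x-y|,2)$ and read off the dual potential) is indeed the proof route in \cite{gw2}, so the approach is right and not a different one. There is, however, a small but genuine slip in the bookkeeping: the cost to or from the cemetery point should be $1$, not $2$. The definition of $\gwbase^{1,1,1}$ charges $1$ for destroying a unit of mass from $\mu$ (the term $|\mu-\tilde\mu|$) and $1$ for creating a unit for $\nu$ (the term $|\nu-\tilde\nu|$), so a full delete-and-recreate costs $1+1=2$, which is exactly why the truncation level is $2$. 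A quick sanity check confirms this: for $\mu=\delta_0$ and $\nu=0$ one has $\gwbase^{1,1,1}(\mu,\nu)=1$, and with $\bar\mu=\delta_0$, $\bar\nu=\delta_\infty$, the Kantorovich cost is $c(0,\infty)$, which must therefore equal $1$. The value $1$ is also what makes the dual constraint work: normalising the potential so that $f(\infty)=0$, the constraint $|f(x)-f(\infty)|\le c(x,\infty)=1$ is precisely $\|f\|_\infty\le 1$. With that one constant corrected, the sketch is sound.
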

The proof is given in \cite{gw2}.

From now on, we will only deal with the generalized Wasserstein distance $\gwbase^{1,1,1}$, i.e. with the flat distance. For this reason, we will drop the parameters, and use the notation $$\gw{\mu,\nu}:=\gwbase^{1,1,1}(\mu,\nu).$$
Moreover, we use the same notation for the corresponding distance on $\mathcal{M}_c(\R^d)\times \mathcal{M}_c(\R^d)$: given $\mu=(\mu^F,\mu^L)$ and $\nu=(\nu^F,\nu^L)$, we write
\begin{eqnarray}
\gw{\mu,\nu}:=\gw{\mu^F,\nu^F} + \gw{\mu^L,\nu^L}.
\label{e-distFL}
\end{eqnarray}

Finally, we use again the same notation for the supremum distance on $C([0,T],\mathcal{M}_c(\R^d)\times \mathcal{M}_c(\R^d))$: given $\mu:t\mapsto\mu_t=(\mu^F_t,\mu^L_t)$ and $\nu:t\mapsto\nu_t=(\nu^F_t,\nu^L_t)$, we write
\begin{eqnarray}
\gw{\mu,\nu}:=\sup_{t\in[0,T]}\W_g((\mu^{F,k}_t,\mu^{L,k}_t),(\nu^{F,k}_t,\nu^{L,k}_t)).\label{e-distsup}
\end{eqnarray}

\section{Well-posedness and equivalence for the leader-follower dynamics}\label{sec:equivalence}

We now turn our attention to system \eqref{eq:macroleadfollstrong} and use the tools introduced in Section \ref{sec:preliminaries} to prove the existence and uniqueness of solutions. To do so, we will define a sequence of measures $(\mu^{F,k},\mu^{L,k})$ as explicit Euler approximations of the dynamics \eqref{eq:macroleadfollstrong} and, by a a compactness argument in the space $\mathcal{M}_c(\R^d)$ embedded with the generalized Wasserstein distance $\mathcal{W}_g$, we show that it converges, up to subsequences, to the unique solution $(\mu^F,\mu^L)$ of system \eqref{eq:macroleadfollstrong}. Next, we shall establish a bijection between solutions of \eqref{eq:macroleadfollstrong} and of \eqref{eq:macronusigma} under certain assumptions on the initial data. As a byproduct of the previous results, such equivalence yields the well-posedness of \eqref{eq:macronusigma} as well, paving the way for the mean-field analysis of the subsequent sections.

\subsection{Main assumptions}

In this section we discuss the set of assumptions we shall assume henceforth. These assumptions assure, in particular, the existence and uniqueness of solutions of \eqref{eq:macroleadfollstrong}, as well as the  equivalence between \eqref{eq:macroleadfollstrong} and \eqref{eq:macronusigma}, that is more amenable to a mean-field analysis, as we will show in Section \ref{sec:meanfield}. We warn in advance the reader that Assumption \ref{ass:posmeas0} below, differently from the other ones, is not needed for the existence result in Proposition \ref{p-ex}, but will be used for the equivalence result in Proposition \ref{p-equivalenza}.
\begin{enumerate}[label=(H\arabic*)]
    \item \label{ass:posmeas0}\label{h1} There exist $\overline{\sigma} \in\mathcal{P}(\{F,L\})$ and $\overline{\nu} \in \mathcal{M}_c(\R^d)$ such that $\overline{\mu}^F = \overline{\sigma}(F)\overline{\nu}$ and $\overline{\mu}^L=\overline{\sigma}(L)\overline{\nu}$.
	\item \label{ass:lipK}\label{h2} there exists a constant $L_K > 0$ such that, for every $x_1,x_2 \in \R^d$ and $i \in \{F,L\}$, it holds
	$$|K^i(x_1) - K^i(x_2)| \leq L_K|x_1 - x_2|.$$
	\item \label{ass:sublinK} \label{h3}there exists a constant $B_K > 0$ such that, for every $x \in \R^d$ and $i\in\{F,L\}$, it holds
	$$|K^i(x)| \leq B_K(1 + |x|).$$
	
	\item \label{ass:supalpha} \label{h4}there exists a constant $M_{\alpha}$ such that for every $i \in \{F,L\}$ and $(\mu^F,\mu^L) \in \mathcal{M}_c(\R^d)\times\mathcal{M}_c(\R^d)$ it holds $$0\leq \alpha_i(\mu^F,\mu^L)\leq M_\alpha.$$

	\item \label{ass:lipalpha} \label{h5} there exists a constant $L_{\alpha, M, R}$ such that, for every $i \in \{F,L\}$ and $(\mu^F,\mu^L),(\nu^F,\nu^L) \in \mathcal{M}_c(\R^d)\times\mathcal{M}_c(\R^d)$ satisfying 
	\begin{equation}
	|\mu^F|+|\mu^L|=|\nu^F|+|\nu^L|\le M,
	\label{e-massconstraint}
\end{equation}	
and
\begin{equation}
	\mathrm{supp }(\mu^j),\mathrm{supp }(\nu^j)\subset B(0, R),\qquad j\in\{F,L\}
	\label{e-boundedsupp}
\end{equation}	
 it holds
\begin{equation}
	|\alpha_i(\mu^F,\mu^L)- \alpha_i(\nu^F,\nu^L)|\leq L_{\alpha, M, R}(\gw{\mu^F,\nu^F} + \gw{\mu^L,\nu^L}).
	\label{e-gwLip-prel}
	\end{equation}
	\end{enumerate}

We now list some useful consequences of the previous hypotheses.

\begin{proposition} Let \ref{h4}-\ref{h5} hold, and $\mu,\nu$ satisfy \eqref{e-massconstraint}-\eqref{e-boundedsupp}. Then, it exists $L'_{\alpha,M,R}$ such that for each $i\in\{F,L\}$ it holds
\begin{equation}
	\gw{\alpha_i(\mu^F,\mu^L)\mu^i, \alpha_i(\nu^F,\nu^L)\nu^i}\leq L'_{\alpha, M, R}\gw{\mu,\nu}
	\label{e-gwLip}
	\end{equation}
\end{proposition}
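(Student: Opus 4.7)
The plan is to combine the triangle inequality for $\gwbase$ with the two basic identities provided by Theorem \ref{t-flat}, namely $\gw{\lambda\mu,\bar\lambda\mu}=|\lambda-\bar\lambda||\mu|$ and $\gw{\lambda\mu,\lambda\nu}=\lambda\gw{\mu,\nu}$. Write
\begin{align*}
\gw{\alpha_i(\mu^F,\mu^L)\mu^i,\;\alpha_i(\nu^F,\nu^L)\nu^i}
&\leq \gw{\alpha_i(\mu^F,\mu^L)\mu^i,\;\alpha_i(\nu^F,\nu^L)\mu^i}\\
&\quad +\gw{\alpha_i(\nu^F,\nu^L)\mu^i,\;\alpha_i(\nu^F,\nu^L)\nu^i},
\end{align*}
and treat the two summands separately.

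For the first term I would apply \eqref{e-diff} with $\lambda=\alpha_i(\mu^F,\mu^L)$, $\bar\lambda=\alpha_i(\nu^F,\nu^L)$, both non-negative by \ref{h4}, obtaining
$$\gw{\alpha_i(\mu^F,\mu^L)\mu^i,\;\alpha_i(\nu^F,\nu^L)\mu^i}= |\alpha_i(\mu^F,\mu^L)-\alpha_i(\nu^F,\nu^L)|\,|\mu^i|.$$
The mass bound $|\mu^i|\leq|\mu^F|+|\mu^L|\leq M$ from \eqref{e-massconstraint}, together with the Lipschitz estimate \ref{h5}, bounds this by $M\,L_{\alpha,M,R}\gw{\mu,\nu}$ (recalling the convention \eqref{e-distFL}). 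For the second term I would invoke \eqref{e-omo} and \ref{h4} to get
$$\gw{\alpha_i(\nu^F,\nu^L)\mu^i,\;\alpha_i(\nu^F,\nu^L)\nu^i}=\alpha_i(\nu^F,\nu^L)\gw{\mu^i,\nu^i}\leq M_\alpha\gw{\mu,\nu}.$$

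Combining, $L'_{\alpha,M,R}:=M\,L_{\alpha,M,R}+M_\alpha$ does the job. There is no real obstacle here: the only care needed is to make sure that both $\alpha_i$ values are non-negative (so the homogeneity formulas of Theorem \ref{t-flat} apply as stated, with the boundary case $\alpha_i=0$ being trivial), and to use the convention \eqref{e-distFL} consistently so that the single-component bounds $\gw{\mu^i,\nu^i}\leq\gw{\mu,\nu}$ and $\gw{\mu^F,\nu^F}+\gw{\mu^L,\nu^L}=\gw{\mu,\nu}$ hold by definition.
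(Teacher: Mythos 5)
Your proof is correct and follows exactly the same route as the paper: triangle inequality to split off $\alpha_i(\nu^F,\nu^L)\mu^i$ as an intermediate measure, then \eqref{e-diff} with \ref{h5} and the mass bound $|\mu^i|\le M$ for the first term, and \eqref{e-omo} with \ref{h4} for the second, yielding the constant $L'_{\alpha,M,R}=M L_{\alpha,M,R}+M_\alpha$. The remark about the degenerate case $\alpha_i=0$ (where \eqref{e-omo} is stated for $\lambda>0$) is a small but legitimate point of care that the paper leaves implicit.
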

\begin{proof} Use the triangular inequality and \eqref{e-diff}-\eqref{e-omo} to write
\begin{eqnarray*}
&&\gw{\alpha_i(\mu^F,\mu^L)\mu^i, \alpha_i(\nu^F,\nu^L)\nu^i}\leq\\
&&\gw{\alpha_i(\mu^F,\mu^L)\mu^i, \alpha_i(\nu^F,\nu^L)\mu^i}+\gw{\alpha_i(\nu^F,\nu^L)\mu^i, \alpha_i(\nu^F,\nu^L)\nu^i}=\\
&&|\alpha_i(\mu^F,\mu^L)- \alpha_i(\nu^F,\nu^L)|\,|\mu^i|+\alpha_i(\nu^F,\nu^L)\gw{\mu^i,\nu^i}\leq\\
&& L_{\alpha, M, R}(\gw{\mu^F,\nu^F} + \gw{\mu^L,\nu^L}) M + M_\alpha \gw{\mu^i,\nu^i},
\end{eqnarray*}
from which the result easily follows.
\end{proof}

For $\nu \in \mathcal{P}_1(\mathbb{R}^d)$ and $\sigma \in \mathcal{P}_1(\{F, L\})$, we will use (as  already done in \eqref{e-A}) the notations $\alpha_F(\nu, \sigma)$ and $\alpha_L(\nu, \sigma)$ to indicate the transition rates defined by
\begin{equation}\label{eq:alphashortcut}
\alpha_F(\nu, \sigma):=\alpha_F(\sigma(F)\nu, \sigma(L)\nu),\quad \alpha_L(\nu, \sigma):=\alpha_L(\sigma(F)\nu, \sigma(L)\nu)\,.
\end{equation}
If \eqref{e-gwLip-prel} holds, it easily follows from the definition \eqref{e-gw} that, for $\nu_1$, $\nu_2 \in \mathcal{P}_1(\mathbb{R}^d)$ satisfying \eqref{e-boundedsupp}  and $\sigma_1$, $\sigma_2 \in \mathcal{P}_1(\{F, L\})$, we have
\[
|\alpha_i(\nu_1,\sigma_1)- \alpha_i(\nu_2,\sigma_2)|\leq L_{\alpha, R} (\mathcal{W}_1(\nu_1, \nu_2)+ |\sigma_1(F)-\sigma_2(F)|)
\]
for $i=F, L$. We additionally exploited above the inequality $\gw{\nu_1, \nu_2}\le \mathcal{W}_1(\nu_1, \nu_2)$ which immediately stems out of \eqref{e-gw} whenever $\nu_1$ and $\nu_2$ are probability measures. If we endow the set $\{F, L\}$ with the usual distance on finite sets defined by  
\begin{align}\label{eq:distfinite}
    |y - \overline{y}|_{\{F,L\}} \defin \begin{cases}
0 & \text{ if } y = \overline{y},\\
1 & \text{ otherwise.}
\end{cases}\end{align}
we can rewrite the above inequality as
\begin{equation}\label{e-w1lip}
|\alpha_i(\nu_1,\sigma_1)- \alpha_i(\nu_2,\sigma_2)|\leq L_{\alpha, R} (\mathcal{W}_1(\nu_1, \nu_2)+ \mathcal{W}_1(\sigma_1,\sigma_2))\,.
\end{equation}

\subsection{Existence and uniqueness}

In this section, we prove existence and uniqueness of the solutions to Cauchy problems with dynamics given by systems \eqref{eq:macroleadfollstrong} and \eqref{eq:macronusigma}. For the first case, we will adapt ideas from \cite{gw}, while for the second we will use the equivalence of the two problems.

We first prove an existence result for \eqref{eq:macroleadfollstrong}.

\begin{proposition} \label{p-ex}
Let an initial data $(\mu^F_0,\mu^L_0)\in \mathcal{M}_c(\R^d)\times \mathcal{M}_c(\R^d)$ and a time interval $[0,T]$ be fixed. For each $k\in \mathbb{N}$, define an {\bf explicit Euler approximation} $\mu^{F,k},\mu^{L,k}$ of the solution to system \eqref{eq:macroleadfollstrong} as follows: fix $\Delta t=T/2^k$ and define
\begin{eqnarray}
&&\hspace{-15mm}(\mu_0^{F,k},\mu_0^{L,k}):=(\mu_0^{F},\mu_0^{L});\label{e-scheme0}\\
&&\hspace{-15mm}v^k_{n\Delta t}:=K^F\conv \mu^{F,k}_{n\Delta t}+K^L\conv \mu^{L,k}_{n\Delta t},\hspace{5cm}
n=0,\ldots,2^k,\label{e-schemev}\\
&&\hspace{-15mm}\mu^{F,k}_{(n+1)\Delta t}:=\Phi^{v^k_{n\Delta t}}_{\Delta t}\#\left(\mu^{F,k}_{n\Delta t} +\Delta t (-\alpha_F(\mu^{F,k}_{n\Delta t},\mu^{L,k}_{n\Delta t})\mu^{F,k}_{n\Delta t}+\alpha_L(\mu^{F,k}_{n\Delta t},\mu^{L,k}_{n\Delta t})\mu^{L,k}_{n\Delta t})\right);\label{e-schemeF}\\
&&\hspace{-15mm}\mu^{L,k}_{(n+1)\Delta t}:=\Phi^{v^k_{n\Delta t}}_{\Delta t}\#\left(\mu^{L,k}_{n\Delta t} +\Delta t (\alpha_F(\mu^{F,k}_{n\Delta t},\mu^{L,k}_{n\Delta t})\mu^{F,k}_{n\Delta t}-\alpha_L(\mu^{F,k}_{n\Delta t},\mu^{L,k}_{n\Delta t})\mu^{L,k}_{n\Delta t})\right).\label{e-schemeL}
\end{eqnarray}
Also define the solution on intermediate times: for $\tau\in (0,1)$ define
\begin{eqnarray}
&&\hspace{-15mm}\mu^{F,k}_{(n+\tau)\Delta t}:=\Phi^{v^k_{n\Delta t}}_{\tau\Delta t}\#\left(\mu^{F,k}_{n\Delta t} +\tau\Delta t (-\alpha_F(\mu^{F,k}_{n\Delta t},\mu^{L,k}_{n\Delta t})\mu^{F,k}_{n\Delta t}+\alpha_L(\mu^{F,k}_{n\Delta t},\mu^{L,k}_{n\Delta t})\mu^{L,k}_{n\Delta t})\right);\label{e-schemeFt}\\
&&\hspace{-15mm}\mu^{L,k}_{(n+\tau)\Delta t}:=\Phi^{v^k_{n\Delta t}}_{\tau\Delta t}\#\left(\mu^{L,k}_{n\Delta t} +\tau\Delta t (\alpha_F(\mu^{F,k}_{n\Delta t},\mu^{L,k}_{n\Delta t})\mu^{F,k}_{n\Delta t}-\alpha_L(\mu^{F,k}_{n\Delta t},\mu^{L,k}_{n\Delta t})\mu^{L,k}_{n\Delta t})\right).\label{e-schemeLt}
\end{eqnarray}

Let Hypotheses \ref{ass:lipK}-\ref{ass:sublinK}-\ref{ass:supalpha}-\ref{ass:lipalpha} hold. Let moreover be $\Delta t M_\alpha<1$. Then, the following properties hold:
\begin{enumerate}
\item both $\mu^{F,k}_t$ and $\mu^{F,k}_t$ are non-negative measures;
\item the total mass is preserved, since it satisfies 
\begin{equation}
|\mu^{F,k}_t|+|\mu^{L,k}_t|=|\mu^{F}_0|+|\mu^{L}_0|;\label{e-massa}
\end{equation}
\item the sequence has equi-bounded support, i.e there exists $R>0$ such that for all $t\in[0,T]$ and $k\in\mathbb{N}$ it holds
$$\mathrm{supp}(\mu^{F,k}_t),\mathrm{supp}(\mu^{L,k}_t)\subset B(0,R);$$
\item the sequence $\{(\mu^{F,k}_t,\mu^{L,k}_t)\}_{k\in\mathbb{N}}$ is uniformly bounded and uniformly Lipschitz in the $t$ variable with respect to the distance \eqref{e-distFL}.
\end{enumerate}
As a consequence, there exists a subsequence of $(\mu^{L,k},\mu^{F,k})$ converging with respect to the uniform convergence, i.e. with respect to the metric \eqref{e-distsup}.
The limit of such subsequence is a solution to \eqref{eq:macroleadfollstrong}.

\end{proposition}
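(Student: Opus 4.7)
The plan is to verify items (1)--(4) one by one, then extract a uniformly convergent subsequence by Ascoli--Arzelà, and finally pass to the limit in the weak formulation of Definition \ref{def:solmacroleadfoll}.

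For item (1), the key observation is that the measure inside the push-forward in \eqref{e-schemeF} can be rewritten as $(1-\Delta t\,\alpha_F(\mu^{F,k}_{n\Delta t},\mu^{L,k}_{n\Delta t}))\,\mu^{F,k}_{n\Delta t}+\Delta t\,\alpha_L(\mu^{F,k}_{n\Delta t},\mu^{L,k}_{n\Delta t})\,\mu^{L,k}_{n\Delta t}$: both coefficients are non-negative thanks to \ref{ass:supalpha} and the standing condition $\Delta tM_\alpha<1$, so the measure is non-negative and push-forward preserves this; the analogous computation for $\mu^{L,k}$ closes the induction. Item (2) is immediate: the source terms in \eqref{e-schemeF}--\eqref{e-schemeL} add to zero and push-forward is mass-preserving. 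For item (3), let $R_n$ bound the common support at step $n$; by \ref{ass:sublinK} and (2) the velocity satisfies $\|v^k_{n\Delta t}\|_{L^\infty(B(0,R_n))}\leq B_K(1+R_n)M_0$ with $M_0:=|\mu^F_0|+|\mu^L_0|$, and \eqref{e-evolsupp} gives $R_{n+1}\leq R_n+\Delta t\,B_K(1+R_n)M_0$, from which a discrete Grönwall argument yields a uniform $R$ depending only on $T,B_K,M_0,R_0$; the same reasoning covers intermediate times in \eqref{e-schemeFt}--\eqref{e-schemeLt}. Item (4) then follows by bounding, inside each subinterval $[n\Delta t,(n+1)\Delta t]$, the push-forward contribution via \eqref{e-gw1} (with the velocity bound just obtained) and the source contribution via \eqref{e-gwmass} and \ref{ass:supalpha}, producing a Lipschitz constant $C$ independent of $k$; concatenation extends this to the whole interval.

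With (1)--(4) at hand, the family $\{(\mu^{F,k},\mu^{L,k})\}_k$ is uniformly equicontinuous in $C([0,T];\mathcal{M}_c(\R^d)\times\mathcal{M}_c(\R^d))$ with values in the tight set of non-negative measures of mass $\leq M_0$ supported in $B(0,R)$, which is $\mathcal{W}_g$-relatively compact by Proposition \ref{p-gw}(2) together with Prokhorov's theorem. Ascoli--Arzelà then furnishes a subsequence, not relabeled, converging uniformly in $t$ (with respect to the metric \eqref{e-distsup}) to some limit $(\mu^F,\mu^L)$.

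It remains to check that this limit solves \eqref{eq:macroleadfollstrong} in the sense of Definition \ref{def:solmacroleadfoll}. Items (1)--(3) of the definition follow directly from the construction and the uniform estimates above; for item (4) one tests \eqref{e-schemeFt} (and its analogue for $\mu^L$) against $\varphi\in\mathcal{C}^1_c(\R^d)$, writes the resulting identity as a telescoping sum over the discretization, and lets $k\to\infty$: the push-forward contribution converges to $\int_0^t\int_{\R^d}\nabla\varphi\cdot(K^F\conv\mu^F_\tau+K^L\conv\mu^L_\tau)\,d\mu^F_\tau\,d\tau$ by \eqref{e-gw2} combined with the uniform convergence of the subsequence, while the source contribution converges to its integral counterpart thanks to \eqref{e-gwLip} and \ref{ass:lipalpha}. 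Differentiating the resulting integral identity in $t$ yields the weak formulation. The main technical obstacle, as is standard for measure-valued Euler schemes, lies precisely here: one must control uniformly in $t\in[0,T]$ the discrepancy between the piecewise-frozen velocity $v^k_{n\Delta t}$ and its continuous-time limit $K^F\conv\mu^F_t+K^L\conv\mu^L_t$, which requires coupling the Lipschitz-in-measure bound for $K^i\conv\cdot$ coming from \ref{ass:lipK} with the uniform Lipschitz-in-time estimate from item (4), together with the uniform support bound from item (3) so that the convolutions remain bounded.
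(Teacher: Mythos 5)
Your overall plan mirrors the paper's proof exactly: items (1)--(4) by induction and discrete Gr\"onwall, Ascoli--Arzel\`a for compactness, and passage to the limit in the weak formulation. Items (1)--(3) and the compactness step are essentially identical to the paper's argument. Your treatment of the equi-Lipschitz estimate (4) is, if anything, slightly cleaner: you explicitly split the push-forward and source contributions on each subinterval, whereas the paper briefly invokes the composition-of-flows identity $\mu^{F,k}_s=\Phi^{v^k_{n\Delta t}}_{s-t}\#\mu^{F,k}_t$, which strictly speaking drops the extra source term $\Phi^{v^k_{n\Delta t}}_{\tau_2\Delta t}\#\bigl((\tau_2-\tau_1)\Delta t\,s^k_n\bigr)$ --- harmless for the Lipschitz constant, but your decomposition makes this explicit.

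There is, however, one genuine technical gap in the final step that you gloss over. You propose testing against $\varphi\in\mathcal{C}^1_c(\R^d)$ directly, but the estimate you need for the transport term --- controlling $\int\nabla\varphi\cdot v^k_{n\Delta t}\,d(\mu^{F,k}-\mu^{F,*})$ via the Kantorovich--Rubinstein duality for $\mathcal{W}_g$ --- requires $\nabla\varphi\cdot v^k_{n\Delta t}$ to be (equi-)Lipschitz. Since $v^k_{n\Delta t}$ is bounded and Lipschitz but $\nabla\varphi$ is merely continuous when $\varphi\in\mathcal{C}^1_c$, the product $\nabla\varphi\cdot v^k_{n\Delta t}$ need not be Lipschitz, and the duality estimate fails. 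The paper resolves this by taking $\varphi\in\mathcal{C}^2_c(\R^d)$ throughout its chain of estimates (this guarantees $\nabla\varphi\cdot v^k_{n\Delta t}$ is equi-bounded and equi-Lipschitz), then recovering $\varphi\in\mathcal{C}^1_c$ at the very end by a density argument, using that the three relevant linear functionals are $\mathcal{C}^1$-continuous. You should incorporate this two-step scheme; otherwise your "the push-forward contribution converges... by \eqref{e-gw2} combined with uniform convergence" does not hold as stated. Relatedly, the passage to the limit requires not just uniform convergence of $\mu^{F,k}$ to $\mu^{F,*}$ but a quantitative chain of estimates (the paper's Estimates 1--5) that also controls the discrepancy between $v^k_{n\Delta t}$ evaluated at the frozen time $n\Delta t$ and the continuous-time $v^*_t$; you identify this as the main obstacle but do not actually carry it out, so this part of your argument remains a sketch rather than a proof.
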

\begin{proof} We prove {\bf Property 1}. We first prove that $\mu^{F,k}_{n\Delta t},\mu^{L,k}_{n\Delta t}$ are non-negative measures for each $n=0,\ldots, 2^k$, by induction on $n$. It is clear that the property holds for $n=0$, since \eqref{e-scheme0} holds.

Let now be $\mu^{F,k}_{n\Delta t},\mu^{L,k}_{n\Delta t}$ non-negative measures. We aim to prove that $\mu^{F,k}_{(n+1)\Delta t},\mu^{L,k}_{(n+1)\Delta t}$ given by \eqref{e-schemeF}-\eqref{e-schemeL} are non-negative measures. We only prove it for $\mu^{F,k}_{(n+1)\Delta t}$, since the proof for $\mu^{L,k}_{(n+1)\Delta t}$ is similar. Observe that $\Delta t M_\alpha<1$, together with \ref{ass:supalpha}, implies 
\begin{equation*}
1-\Delta t \alpha_F(\mu^{F,k}_{n\Delta t},\mu^{L,k}_{n\Delta t})>0.
\end{equation*}
Then $\mu^{F,k}_{n\Delta t} (1-\Delta t \alpha_F(\mu^{F,k}_{n\Delta t},\mu^{L,k}_{n\Delta t}))$ is a non-negative measure, as well as $\Delta t \alpha_L(\mu^{F,k}_{n\Delta t},\mu^{L,k}_{n\Delta t})\mu^{L,k}_{n\Delta t}$. Their sum is thus a non-negative measure, and its push-forward by $\Phi^{v^k_{n\Delta t}}_{\Delta t}$ is non-negative too. By induction, this proves that $\mu^{F,k}_{n\Delta t},\mu^{L,k}_{n\Delta t}$ are non-negative measures for each $n=0,\ldots, 2^k$.

For intermediate times of the form $(n+\tau)\Delta t$, first observe that we just proved that $\mu^{F,k}_{n\Delta t},\mu^{L,k}_{n\Delta t}$ are non-negative measures. Moreover,  $\tau\in(0,1)$ implies $$1-\tau\Delta t \alpha_F(\mu^{F,k}_{n\Delta t},\mu^{L,k}_{n\Delta t})>0.$$ Then, following the proof of the previous case, we have that $\mu^{F,k}_{(n+\tau)\Delta t},\mu^{L,k}_{(n+\tau)\Delta t}$ are non-negative measures.

We now prove {\bf Property 2}. We first prove that \eqref{e-massa} holds for times of the form $n\Delta t$, again by induction on $n$. Definition \eqref{e-scheme0} implies that \eqref{e-massa} holds for $n=0$. If \eqref{e-massa} holds for a given $n$, then it holds for $n+1$, as a consequence of \eqref{e-schemeF}-\eqref{e-schemeL}. Indeed, by the proof of Proposition 1, we know that both $\mu^{F,k}_{n\Delta t} (1-\Delta t \alpha_F(\mu^{F,k}_{n\Delta t},\mu^{L,k}_{n\Delta t}))$ and $\Delta t \alpha_L(\mu^{F,k}_{n\Delta t},\mu^{L,k}_{n\Delta t})\mu^{L,k}_{n\Delta t}$ are non-negative measures, and the same holds for the corresponding terms in \eqref{e-schemeL}. Thus, the mass of the sum is the sum of the masses, and the push-forward of a non-negative measure preserves the mass. As a consequence, it holds
\begin{eqnarray*}
|\mu^{F,k}_{(n+1)\Delta t}|&+&|\mu^{L,k}_{(n+1)\Delta t}|=(1-\Delta t \alpha_F(\mu^{F,k}_{n\Delta t},\mu^{L,k}_{n\Delta t}))|\mu^{F,k}_{n\Delta t}| +\Delta t \alpha_L(\mu^{F,k}_{n\Delta t},\mu^{L,k}_{n\Delta t})|\mu^{L,k}_{n\Delta t}|+\nonumber\\
&&(1-\Delta t \alpha_L(\mu^{F,k}_{n\Delta t},\mu^{L,k}_{n\Delta t})\mu^{L,k}_{n\Delta t})|\mu^{L,k}_{n\Delta t}| +\Delta t \alpha_F(\mu^{F,k}_{n\Delta t},\mu^{L,k}_{n\Delta t})|\mu^{F,k}_{n\Delta t}|=\nonumber\\
&&|\mu^{F,k}_{n\Delta t}|+|\mu^{L,k}_{n\Delta t}|=|\mu^{F}_0|+|\mu^{F}_0|,
\end{eqnarray*}
where we used homogeneity of the mass $|\lambda \mu|=\lambda |\mu|$. The proof for intermediate times is  identical.

We now prove {\bf Property 3}. First observe that, due to \ref{ass:sublinK}, the hypothesis $$\supp(\mu^F),\supp(\mu^L)\subset B(0,R)$$ implies
\begin{eqnarray}
\|K^F\conv \mu^F+ K^L\conv\mu^L\|_{\mathcal{C}^0}\leq \|K^F\|_{\mathcal{C}^0} |\mu^F|+\|K^L\|_{\mathcal{C}^0} |\mu^L|\leq B_K (1+2R)(|\mu^F|+ |\mu^L|)\label{e-stimaC0}.
\end{eqnarray}

Choose now $R_0>0$ such that $\supp(\mu^F_0), \supp(\mu^L_0)\subset B(0,R_0)$. We now define a sequence $R_n^k$ such that  
\begin{equation}
\supp(\mu^{F,k}_{n\Delta t}), \supp(\mu^{L,k}_{n\Delta t})\subset B(0,R_n^k),\label{e-suppnk}
\end{equation} by induction. It first holds $R_0^k=R_0$ by \eqref{e-scheme0}. By definition of $v^k_{n\Delta t}$ in \eqref{e-schemev}, and also using \eqref{e-stimaC0} and Property 1, it holds
\begin{eqnarray}
\|v^k_{n\Delta t}\|_{\mathcal{C}^0}\leq B_K (1+2R^k_n)(|\mu^{F,k}_{n\Delta t}|+ |\mu^{L,k}_{n\Delta t}|)=B_K (1+2R^k_n)(|\mu^{F}_{0}|+ |\mu^{L}_{0}|). \label{e-stimavkn}
\end{eqnarray}
Apply \eqref{e-evolsupp} to \eqref{e-schemeF}-\eqref{e-schemeL}: since $\supp(\mu^{F,k}_{n\Delta t}), \supp(\mu^{L,k}_{n\Delta t})\subset B(0,R_n^k)$, then
\begin{eqnarray}
\supp(\mu^{F,k}_{(n+1)\Delta t}), \supp(\mu^{L,k}_{(n+1)\Delta t})\subset B(0,R_n^k+\Delta t B_K (1+2R^k_n)(|\mu^{F}_{0}|+ |\mu^{L}_{0}|)).
\label{e-supp1}
\end{eqnarray}

Define now the sequence $$R^k_{0}:=R_0,\qquad R^k_{n+1}=(1+\Delta t\, C) R^k_n+\Delta t\,C$$ with $C:=
2B_K (|\mu^{F}_{0}|+ |\mu^{L}_{0}|))$. With this choice, \eqref{e-suppnk} holds. Moreover, again by applying \eqref{e-evolsupp}  to the definition of $\mu^k_t$ at intermediate times \eqref{e-schemeFt}-\eqref{e-schemeLt}, for each $\tau\in (0,1)$ it holds 
\begin{eqnarray}
\supp(\mu^{F,k}_{(n+\tau)\Delta t}), \supp(\mu^{L,k}_{(n+\tau)\Delta t})\subset B(0,R^k_{n+1}).
\label{e-supp2}
\end{eqnarray}

We now recall that $n$ runs from $0$ to $2^k$. Since $R^k_n$ is an increasing sequence with respect to the parameter $n$, then \eqref{e-supp1}-\eqref{e-supp2} imply that for each $k\in\mathbb{N}$ and each $t\in[0,T]$ it holds
\begin{eqnarray*}
\supp(\mu^{F,k}_{t}), \supp(\mu^{L,k}_{t})\subset B(0,R^k_{2^k}).
\end{eqnarray*}
An explicit computation shows that 
\begin{eqnarray}
R^k_{2^k}=(1+\Delta t\, C)^{2^k} (R_0+1)-1\leq e^{2^{k} \Delta t\,C}(R_0+1)<e^{TC}(R_0+1),\label{e-globsupp}
\end{eqnarray}
thus, supports of $\mu^k_t$ are uniformly bounded.

We now prove {\bf Property 4}. Since we proved that $|\mu^{F,k}_{t}|+|\mu^{L,k}_{t}|=|\mu^{F}_0|+|\mu^{L}_0|$, it holds both $|\mu^{F,k}_t|\leq |\mu^{F}_0|+|\mu^{L}_0|$ and $|\mu^{F,k}_t|\leq |\mu^{F}_0|+|\mu^{L}_0|$. Then, by applying \eqref{e-gwmass}, it holds $$\gw{\mu^{F,k}_t,\mu^{L,k}_t}\leq |\mu^{F,k}_t|+|\mu^{L,k}_t|\leq 2(|\mu^{F}_0|+|\mu^{L}_0|),$$ then the sequence is equi-bounded.

We now prove equi-Lipschitz continuity. Let $k\in\mathbb{N}$ be fixed, and assume to have $t,s$ such that $n\Delta t\leq t<s\leq (n+1)\Delta t$. We then want to estimate  $\W_g(\mu^{k}_t,\mu^{k}_s)$. Observe that, by \eqref{e-schemeFt}-\eqref{e-schemeLt} and the property of composition of flows, it holds $\mu^{F,k}_s=\Phi^{v^k_{n\Delta t}}_{s-t}\#\mu^{F,k}_t$, and similarly for $\mu^{L,k}$. Apply now \eqref{e-gw1} to $v^k_{n\Delta t}$, that satisfies \eqref{e-stimavkn} and recall that $R^k_n\leq e^{TC}(R_0+1)$, as proved for Property 3. This implies 
\begin{eqnarray}
\W_g(\mu^{F,k}_t,\mu^{F,k}_s)\leq B_K(1+2e^{TC}(R_0+1))(|\mu^{F}_0|+|\mu^{L}_0|)^2|t-s|.
\label{e-stimaLip}
\end{eqnarray}
The same estimate holds for $\W_g(\mu^{L,k}_t,\mu^{L,k}_s)$, then for $\W_g(\mu^{k}_t,\mu^{k}_s)$ by doubling the right hand side. For general $t<s\in[0,T]$, one recovers \eqref{e-stimaLip} by applying the triangular inequality on each sub-interval $[t,n\Delta t ]$, $[n\Delta t,(n+1)\Delta t]$, $\ldots$, $[(n+k)\Delta t, s]$.

We finally prove the {\bf existence of a solution to  \eqref{eq:macroleadfollstrong}}. First observe that Property 4, together with the Arzel\`a-Ascoli theorem, implies the existence of a subsequence (that we do not relabel) $\mu^{k}$ that uniformly converges to some $\mu^*$ with respect to the metric $\W_g$.

We are left to prove that $\mu^*$ is a solution to \eqref{eq:macroleadfollstrong}, in the sense of Definition \ref{def:solmacroleadfoll}. Since $\mu^{F,k}_0=\bar\mu^F$, by uniform convergence it holds $\mu^{F,*}_0=\bar\mu^F$, and the same holds for $\mu^{L,*}_0$. Then, Condition 1 of Definition \ref{def:solmacroleadfoll} is proved.

Condition 3 of uniform boundedness of the support comes from Property 3. Indeed, $\mu^{k}$ has uniformly bounded support in some $B(0,R)$ implies that $\mu^*$ has uniformly bounded support too, in $B(0,R+1)$. To prove this classical result, it is sufficient to test $\mu^*$ with functions having support outside $B(0,R+1)$.

We now prove Condition 2 of continuity with respect to the weak convergence of measures. It is a consequence of the fact that the sequence $\mu^{k}$ is equi-Lipschitz, thus $\mu^*$ is Lipschitz with respect to the distance $\W_g$, and such distance metrizes weak convergence on measures with equi-bounded support (Proposition \ref{p-gw}, statement 2).

We now prove Condition 4. We first prove a list of auxiliary estimates. Take a function $\varphi$ with extra regularity, namely $\varphi\in \mathcal{C}^2_c(\R^d)$, and fix $t\in[0,T]$. For each $k$ in the subsequence $\mu^k\to\mu^*$, choose $n$ as the largest integer satisfying $n\Delta t\leq t$. Thus, $t-n\Delta t\geq 0$. We have the following estimates:
\begin{description}
\item[Estimate 1.] Take $m_1:=\|\varphi\|_{\mathcal{C}^1}=\|\varphi\|_{\mathcal{C}^0}+\mathrm{Lip}(\varphi).$
It then holds
\begin{eqnarray*}
\left| \int_{\R^d}\varphi\,d(\mu^{F,*}_t-\mu^{F,k}_t)\right|\leq m_1\W_g(\mu^{F,*}_t,\mu^{F,k}_t).
\end{eqnarray*}
This is a consequence of the Kantorovich-Rubinstein duality for the generalized Wasserstein distance, see Theorem \ref{t-flat}.
\item[Estimate 2.]  Define 
\begin{eqnarray}
v^*_t:=K^F\conv \mu^{F,*}_t+K^L\conv \mu^{K,*}_t.\label{e-vstar}
\end{eqnarray}
It exists $m_2$, independent on $t,k,n$, such that it holds
\begin{eqnarray}
&&\left| \int_{\R^d}\nabla\varphi(x)\cdot v^{k}_{n\Delta t} d\Phi^{v^k_{n\Delta t}}_{t-n\Delta t}\#\mu^{F,k}_{n\Delta t}-\int_{\R^d}\nabla\varphi(x)\cdot v^{*}_{t} d\mu^{*,k}_{t}\right|\leq \nonumber\\
&&m_2(\W_g(\mu^{F,*}_t,\mu^{F,k}_t)+(t-n\Delta t)).\label{ee2}
\end{eqnarray}
Indeed, we first observe that \eqref{e-stimavkn}-\eqref{e-globsupp} imply 
\begin{eqnarray}
\|v^k_{n\Delta t}\|_{\mathcal{C}^0}\leq B_K(1+(2e^{TC}(R_0+1)))(|\mu^{F}_{0}|+ |\mu^{L}_{0}|).\label{e-vknc0}
\end{eqnarray}
Second, recall that $v^k_n$ in \eqref{e-schemev} is defined as a convolution. The, Lipschitz continuity of $K^F,K^L$ given by \ref{ass:lipK}, implies equi-Lipschitz continuity of the $v^k_n$. Indeed, it holds
\begin{eqnarray}
 |(K^F\conv \mu)(x)-(K^F\conv\mu)(y)|&\leq&\int_{\R^d} |K^F(z-x)-K^F(z-y)|\,d\mu(z)\leq\nonumber\\
 && L_k|x-y|\,|\mu|.
 \label{e-Kequilip}
 \end{eqnarray}
Thus, equi-boundedness of masses (Property 2) implies equi-Lipschitz continuity.

Third, since $\varphi\in \mathcal{C}^2(\R^d)$, the family $\nabla\varphi\cdot v^{k}_{n\Delta t}$ is equi-bounded and equi-Lipschitz, i.e. $m'_2:=\sup_{n,k}\left\{\|\nabla\varphi\cdot v^{k}_{n\Delta t}\|_{\mathcal{C}^0},\mathrm{Lip}(\nabla\varphi\cdot v^{k}_{n\Delta t})\right\}$ is finite. Thus, Kantorovich-Rubinstein duality implies
\begin{eqnarray}
\left| \int_{\R^d}\nabla\varphi(x)\cdot v^{k}_{n\Delta t} d(\Phi^{v^k_{n\Delta t}}_{t-n\Delta t}\#\mu^{F,k}_{n\Delta t}- \mu^{F,*}_{t})\right|\leq m'_2\W_g(\Phi^{v^k_{n\Delta t}}_{t-n\Delta t}\#\mu^{F,k}_{n\Delta t},\mu^{F,*}_{t}).
\label{e-usain2}
\end{eqnarray}
We apply the triangular inequality to have
\begin{eqnarray}
\W_g(\Phi^{v^k_{n\Delta t}}_{t-n\Delta t}\#\mu^{F,k}_{n\Delta t},\mu^{F,*}_{t})\leq
\W_g(\Phi^{v^k_{n\Delta t}}_{t-n\Delta t}\#\mu^{F,k}_{n\Delta t},\mu^{F,k}_{t})+
\W_g(\mu^{F,k}_{t},\mu^{F,*}_{t}).
\label{e-usain21}
\end{eqnarray}
For the first term, recall the definition of \eqref{e-schemeFt} and apply the Kantorovich-Rubinstein duality, observing that it holds
\begin{eqnarray*}
&&\int_{\R^d} f\, d(\Phi^{v^k_{n\Delta t}}_{t-n\Delta t}\#\mu^{F,k}_{n\Delta t}-\mu^{F,k}_{t})=\nonumber\\
&&\int_{\R^d} (-f)\,d
\Phi^{v^k_{n\Delta t}}_{t-n\Delta t}\#\left((t-n\Delta t) (-\alpha_F(\mu^{F,k}_{n\Delta t},\mu^{L,k}_{n\Delta t})\mu^{F,k}_{n\Delta t}+\alpha_L(\mu^{F,k}_{n\Delta t},\mu^{L,k}_{n\Delta t})\mu^{L,k}_{n\Delta t})\right)\nonumber\\
&&\leq \|f\|_{\mathcal{C}^0}(t-n\Delta t) \left| -\alpha_F(\mu^{F,k}_{n\Delta t},\mu^{L,k}_{n\Delta t})\mu^{F,k}_{n\Delta t}+\alpha_L(\mu^{F,k}_{n\Delta t},\mu^{L,k}_{n\Delta t})\mu^{L,k}_{n\Delta t})\right|\leq\nonumber\\
&& 1 (t-n\Delta t) M_\alpha (|\mu^{F,k}_{n\Delta t}|+|\mu^{L,k}_{n\Delta t}|)=(t-n\Delta t) M_\alpha(|\mu_0^F|+|\mu_0^L|).
\end{eqnarray*}
We use the fact that push-forward conserves the mass, hypothesis \ref{ass:supalpha} and Property 2. Since such estimate is independent on $f$ satisfying $\|f\|_{\mathcal{C}^0}\leq 1$, this gives 
\begin{eqnarray*}
\W_g(\Phi^{v^k_{n\Delta t}}_{t-n\Delta t}\#\mu^{F,k}_{n\Delta t},\mu^{F,k}_{t})\leq (t-n\Delta t) M_\alpha(|\mu_0^F|+|\mu_0^L|).
\end{eqnarray*}
Merging it with \eqref{e-usain2}-\eqref{e-usain21}, we have \eqref{ee2}.

\item[Estimate 3.] Define $$s^k_{n\Delta t}:=-\alpha_F(\mu^{F,k}_{n\Delta t},\mu^{L,k}_{n\Delta t})\mu^{F,k}_{n\Delta t}+\alpha_L(\mu^{F,k}_{n\Delta t},\mu^{L,k}_{n\Delta t})\mu^{L,k}_{n\Delta t},$$
and similarly 
$$s^*_t:=-\alpha_F(\mu^{F,*}_{t},\mu^{L,*}_{t})\mu^{F,*}_{t}+\alpha_L(\mu^{F,*}_{t},\mu^{L,*}_{t})\mu^{L,*}_{t}.$$
It exists $m_3$, independent on $t,k,n$, such that it holds
\begin{eqnarray}
\left|\int_{\R^d}\varphi \,d\left(\Phi^{v^k_{n\Delta t}}_{t-n\Delta t}\#s^k_{n\Delta t}-s^*_t\right)\right|\leq m_3(\W_g(\mu^{*}_t,\mu^{k}_t)+(t-n\Delta t)).\label{ee3}
\end{eqnarray}
Indeed, we first consider the negative parts of the measures $\Phi^{v^k_{n\Delta t}}_{t-n\Delta t}\#s^k_{n\Delta t}$ and $s^*_t$. They satisfy
\begin{eqnarray*}
&&C_-:=\left|\int_{\R^d}\varphi(x)\,d(\Phi^{v^k_{n\Delta t}}_{t-n\Delta t}\#(\alpha_F(\mu^{F,k}_{n\Delta t},\mu^{L,k}_{n\Delta t})\mu^{F,k}_{n\Delta t})-\alpha_F(\mu^{F,*}_{t},\mu^{L,*}_{t})\mu^{F,*}_{t})\right|\leq\nonumber\\
&&m_1 \W_g(\Phi^{v^k_{n\Delta t}}_{t-n\Delta t}\#(\alpha_F(\mu^{F,k}_{n\Delta t},\mu^{L,k}_{n\Delta t})\mu^{F,k}_{n\Delta t}),\alpha_F(\mu^{F,k}_{n\Delta t},\mu^{L,k}_{n\Delta t})\mu^{F,k}_{n\Delta t})+\nonumber\\
&&m_1\W_g(\alpha_F(\mu^{F,k}_{n\Delta t},\mu^{L,k}_{n\Delta t})\mu^{F,k}_{n\Delta t},\alpha_F(\mu^{F,*}_{t},\mu^{L,*}_{t})\mu^{F,k}_{n\Delta t})+\nonumber\\
&&m_1\W_g(\alpha_F(\mu^{F,*}_{t},\mu^{L,*}_{t})\mu^{F,k}_{n\Delta t},\alpha_F(\mu^{F,*}_{t},\mu^{L,*}_{t})\mu^{F,*}_{t}).
\end{eqnarray*}
where we used the definition of $m_1$ in Estimate 1, the Kantorovich-Rubinstein duality and the triangular inequality. For the first term, use \eqref{e-gw1} together with the estimate \eqref{e-vknc0} for $\|v^k_{n\Delta t}\|_{\mathcal{C}^0}$, as well as \ref{ass:supalpha}. For the second and third terms, use \ref{ass:lipalpha}: since \eqref{e-massconstraint}-\eqref{e-boundedsupp} hold, then \eqref{e-gwLip-prel} holds with some $L_{\alpha,M,R}$. Moreover, use \eqref{e-diff} for the second term and \eqref{e-omo} for the third one. It then holds
\begin{eqnarray*}
&&C_-\leq m_1(t-n\Delta t) \|v^k_{n\Delta t}\|_{\mathcal{C}^0} |\alpha_F(\mu^{F,k}_{n\Delta t},\mu^{L,k}_{n\Delta t})\mu^{F,k}_{n\Delta t}|+\nonumber\\
&&m_1L_{\alpha,M,R}\gw{\mu^{k}_{n\Delta t},\mu^{*}_{t}} +m_1 \alpha_F(\mu^{F,*}_{t},\mu^{L,*}_{t})\W_g(\mu^{F,k}_{n\Delta t},\mu^{F,*}_{t})\leq\\
&&m'_3 (t-n\Delta t) M_\alpha |\mu^{F,k}_{n\Delta t}|+m_1 L_{\alpha,M,R}  \W_g(\mu^{k}_{n\Delta t},\mu^{*}_{t})+m_1M_\alpha \W_g(\mu^{F,k}_{n\Delta t},\mu^{F,*}_{t}),
\end{eqnarray*}
for some $m'_3$ independent on $t,k,n$. Recall that masses are equi-bounded (Property 2). Also apply the triangular inequality and uniform Lipschitz continuity of the $\mu^k$ (Property 4) to write
\begin{eqnarray*}
\W_g(\mu^{F,k}_{n\Delta t},\mu^{F,*}_{t})\leq \W_g(\mu^{F,k}_{n\Delta t},\mu^{F,k}_{t})+\W_g(\mu^{F,k}_{t},\mu^{F,*}_{t})\leq L' |t-n\Delta t|+\W_g(\mu^{F,k}_{t},\mu^{F,*}_{t}),
\end{eqnarray*}
for some $L'$, and similarly for $\W_g(\mu^{L,k}_{n\Delta t},\mu^{L,*}_{t})$. It then exists $m''_3$ such that 
$$C_-\leq m''_3((t-n\Delta t)+\W_g(\mu^{F,k}_{t},\mu^{F,*}_{t})+\W_g(\mu^{L,k}_{ t},\mu^{L,*}_{t})).$$
An equivalent estimate holds for the positive parts of the measures $\Phi^{v^k_{n\Delta t}}_{t-n\Delta t}\#s^k_{n\Delta t}$ and $s^*_t$. We then recover \eqref{ee3}.

\item[Estimate 4.] There exists $m_4$ independent on $t,k,n$ such that it holds
\begin{eqnarray}
\left|\int_{\R^d}\nabla\varphi(x)\cdot v^{k}_{n\Delta t}\, d\Phi^{v^k_{n\Delta t}}_{t-n\Delta t}\#s^k_{n\Delta t}\right|\leq m_4
\label{ee4}
\end{eqnarray}
Indeed, first recall that $\|v^k_{n\Delta t}\|_{\mathcal{C}^0}$ is uniformly bounded on the support of $\mu^k_{n\Delta t}$. Moreover, 
$$\left| \Phi^{v^k_{n\Delta t}}_{t-n\Delta t}\#s^k_{n\Delta t}\right|=|s^k_{n\Delta t}|=|(s^k_{n\Delta t})^+|+|(s^k_{n\Delta t})^-|$$ is uniformly bounded, as a consequence of \ref{ass:supalpha} and of uniform boundedness of masses (Property 2). This proves \eqref{ee4}.

\item[Estimate 5.] We now prove that $\mu^k_t$ solves an approximated version of \eqref{eq:macroleadfollstrong}. By the definition \eqref{e-schemeFt} of $\mu^{F,k}_t$, and applying elementary properties of derivation as well as Lemma \ref{le:equivPDEODEtransp}, it holds
\begin{eqnarray}
&&\frac{d}{dt}\int_{\R^d}\varphi\,d\mu^{F,k}_t=\frac{d}{dt}\int_{\R^d}\varphi\,d\Phi^{v^k_{n\Delta t}}_{t-n\Delta t}\#\mu^{F,k}_{n\Delta t}+\frac{d}{dt}\int_{\R^d}\varphi\,d\Phi^{v^k_{n\Delta t}}_{t-n\Delta t}\#((t-n\Delta t)s^k_n)=\nonumber\\
&&\int_{\R^d}\nabla\varphi\cdot v^{k}_{n\Delta t} d\Phi^{v^k_{n\Delta t}}_{t-n\Delta t}\#\mu^{F,k}_{n\Delta t}+\frac{d}{dt}\left[(t-n\Delta t)\int_{\R^d}\varphi\,d\Phi^{v^k_{n\Delta t}}_{t-n\Delta t}\#s^k_n\right]=\nonumber\\
&&\int_{\R^d}\nabla\varphi\cdot v^{k}_{n\Delta t} d\Phi^{v^k_{n\Delta t}}_{t-n\Delta t}\#\mu^{F,k}_{n\Delta t}+
\int_{\R^d}\varphi\,d\Phi^{v^k_{n\Delta t}}_{t-n\Delta t}\#s^k_n+\nonumber\\
&&(t-n\Delta t)\int_{\R^d}\nabla\varphi\cdot v^{k}_{n\Delta t}\, d\Phi^{v^k_{n\Delta t}}_{t-n\Delta t}\#s^k_n.\label{e-eqnk}
\end{eqnarray}
for all $t\neq n\Delta t$. The equivalent estimate for $\mu^{L,k}_t$ holds too, by replacing $\mu^{F,k}_t$ with $\mu^{L,k}_t$ and $s^k_n$ with $-s^k_n$.

One can write \eqref{e-eqnk} in integral form too, as follows:
for each $t\in[0,T]$ and $k\in\mathbb{N}$, choose the largest $n$ \footnote{the dependence of $n$  on $t$ and $k$ is omitted for the sake of notation.} satisfying $n\Delta t=n 2^{-k}T\leq t$. For each $\bar t\in[0,T]$, it holds 
\begin{eqnarray}
&&\hspace{-5mm}\int_0^{\bar t} dt \int_{\R^d}\varphi\,d\mu^{F,k}_t- \left(\int_{\R^d}\nabla\varphi\cdot v^{k}_{n\Delta t} d\Phi^{v^k_{n\Delta t}}_{t-n\Delta t}\#\mu^{F,k}_{n\Delta t}+\right.\nonumber\\
&&\hspace{-5mm}\left.\int_{\R^d}\varphi\,d\Phi^{v^k_{n\Delta t}}_{t-n\Delta t}\#s^k_{n}+(t-n\Delta t)\int_{\R^d}\nabla\varphi\cdot v^{k}_{n\Delta t}\, d\Phi^{v^k_{n\Delta t}}_{t-n\Delta t}\#s^k_{n}.\right)=0.
\label{e-mukintform}
\end{eqnarray}

\end{description}

We are now ready to prove Condition 4, that we prove in the equivalent integral form: for every $\bar t\in[0,T]$, the measure $\mu^{F,*}$ satisfies
\begin{eqnarray}
\int_0^{\bar t} dt \left(\int_{\R^d}\varphi\,d\mu^{F,*}_t- \int_{\R^d}\nabla\varphi\cdot v^*_t\, d\mu^{F,*}_t -
		\int_{\R^d}\varphi\,d s^*_t\right)=0,\label{e-muintform}
\end{eqnarray}
and a similar expression holds for $\mu^{L,*}$.

Assume that $\varphi\in \mathcal{C}^2_c(\R^d)$. We then prove that \eqref{e-muintform} holds by writing
\begin{eqnarray*}
&&C^*:=\left|\int_0^{\bar t} dt \left(\int_{\R^d}\varphi\,d\mu^{F,*}_t- \int_{\R^d}\nabla\varphi\cdot v^*_t\, d\mu^{F,*}_t -		\int_{\R^d}\varphi\,d s^*_t\right)\right|\leq\\
&& \int_0^{\bar t} dt \left(\left|\int_{\R^d}\varphi\,d(\mu^{F,*}_t-\mu^{F,k}_t)\right|+\left|\int_{\R^d}\nabla\varphi\cdot v^*_t\, d\mu^{F,*}_t-\int_{\R^d}\nabla\varphi\cdot v^{k}_{n\Delta t} d\Phi^{v^k_{n\Delta t}}_{t-n\Delta t}\#\mu^{F,k}_{n\Delta t}\right|\right.\\
&&+\left|\int_{\R^d}\varphi\,d (s^*_t-\Phi^{v^k_{n\Delta t}}_{t-n\Delta t}\#s^k_n)\right|+\left|\int_{\R^d}\varphi\,d\mu^{F,k}_t- \int_{\R^d}\nabla\varphi\cdot v^{k}_{n\Delta t} d\Phi^{v^k_{n\Delta t}}_{t-n\Delta t}\#\mu^{F,k}_{n\Delta t}-\right.\nonumber\\
&&\left.\int_{\R^d}\varphi\,d\Phi^{v^k_{n\Delta t}}_{t-n\Delta t}\#s^k_n\right|\leq 
\int_0^{\bar t} dt\, \left((m_1+m_2) \W_g(\mu^{F,*}_t,\mu^{F,k}_t)+m_3\W_g(\mu^{*}_t,\mu^{k}_t)+\right.\\
&&\left.(m_2+m_3)(t-n\Delta t)+\left|(t-n\Delta t)\int_{\R^d}\nabla\varphi\cdot v^{k}_{n\Delta t}\, d\Phi^{v^k_{n\Delta t}}_{t-n\Delta t}\#s^k_n\right|\right).
\end{eqnarray*}
We used here Estimates 1, 2, 3,  as well as Estimate 5 in its integral form \eqref{e-mukintform}. Recall now the definition of $\W_g(\mu^{*},\mu^{k})$ in \eqref{e-distsup} and use Estimate 4 for the last term. Also observe that it holds $t-n\Delta t\leq \Delta t= T 2^{-k}$ by the choice of $n$. By defining $m:=m_1+m_2+m_3+m_4$, it holds
\begin{eqnarray*}
&&C^*\leq \bar t (m \W_g(\mu^{*},\mu^{k})+mT2^{-k} ).
\end{eqnarray*}
Since such estimate holds for any $k$ in the converging subsequence, it holds $C^*=0$.

We have then proved that \eqref{e-muintform} is satisfied for any $\varphi\in \mathcal{C}^2_c(\R^d)$. Since for any $\mu^F,s^*$ the three operators $\varphi\to \int_{\R^d}\varphi\,d\mu^F, \int_{\R^d}\nabla\varphi\cdot v^*\, d\mu^{F},	\int_{\R^d}\varphi\,d s^*$ are continuous with respect to the norm $\mathcal{C}^1$, and $\mathcal{C}^2_c(\R^d)$ is dense in $\mathcal{C}^1_c(\R^d)$ with respect to such norm, then  \eqref{e-muintform} is satisfied for any $\varphi\in \mathcal{C}^1_c(\R^d)$.
\end{proof}

We now prove existence and uniqueness of the solution to \eqref{eq:macroleadfollstrong}.
\begin{proposition} \label{p-exun1}
Let an initial data $(\mu^F_0,\mu^L_0)\in \mathcal{M}_c(\R^d)\times \mathcal{M}_c(\R^d)$ and a time interval $[0,T]$ be fixed. Let \ref{ass:lipK}-\ref{ass:sublinK}-\ref{ass:supalpha}-\ref{ass:lipalpha} hold. Then, there exists a unique solution to \eqref{eq:macroleadfollstrong}.
\end{proposition}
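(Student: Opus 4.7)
The existence part is already in hand from Proposition \ref{p-ex}: the Euler scheme produces, via Arzelà--Ascoli, a subsequence converging to a solution in the sense of Definition \ref{def:solmacroleadfoll}. What remains is uniqueness, and this is the bulk of the work.

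\medskip

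My plan is a standard Grönwall argument in the generalized Wasserstein distance $\W_g$. Suppose $(\mu^F,\mu^L)$ and $(\nu^F,\nu^L)$ are two solutions on $[0,T]$ sharing the same initial datum $(\overline{\mu}^F,\overline{\mu}^L)$. First, I would observe (testing Definition \ref{def:solmacroleadfoll} against $\varphi\equiv 1$ on a ball containing the supports, truncated smoothly outside) that both solutions preserve the total mass $M:=|\overline{\mu}^F|+|\overline{\mu}^L|$, exactly as in Property 2 of Proposition \ref{p-ex}. Combining the a priori mass bound with \ref{ass:sublinK} and estimate \eqref{e-evolsuppinhom} applied to each solution (viewed as the inhomogeneous continuity equation driven by $v^\mu_t:=K^F*\mu^F_t+K^L*\mu^L_t$ and by $v^\nu_t$ respectively, with the $\pm\alpha_i$ terms as source $s_t$), I obtain a uniform radius $R=R(T,M,B_K)$ with $\supp(\mu^i_t),\supp(\nu^i_t)\subset B(0,R)$ for all $t\in[0,T]$ and $i\in\{F,L\}$. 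This puts us inside the regime where \ref{ass:lipalpha} is available, so \eqref{e-gwLip} holds with a constant $L'=L'_{\alpha,M,R}$.

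\medskip

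Next, I would apply Duhamel's formula \eqref{e-duhamel} to write, for each $i\in\{F,L\}$,
\begin{equation*}
\mu^i_t=\Phi^{v^\mu}_t\#\overline{\mu}^i+\int_0^t \Phi^{v^\mu}_{(\tau,t)}\#s^{\mu,i}_\tau\,d\tau,\qquad
\nu^i_t=\Phi^{v^\nu}_t\#\overline{\mu}^i+\int_0^t \Phi^{v^\nu}_{(\tau,t)}\#s^{\nu,i}_\tau\,d\tau,
\end{equation*}
where $s^{\mu,F}_\tau=-\alpha_F(\mu^F_\tau,\mu^L_\tau)\mu^F_\tau+\alpha_L(\mu^F_\tau,\mu^L_\tau)\mu^L_\tau$, and symmetrically for the other combinations. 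Using \ref{ass:lipK}--\ref{ass:sublinK} together with the estimate \eqref{e-Kequilip} already derived in the proof of Proposition \ref{p-ex}, both vector fields $v^\mu_t,v^\nu_t$ are Lipschitz with a common constant $L=L_K M$ and uniformly bounded in $\mathcal{C}^0$ by a constant depending only on $M,R,B_K$. The key point is the pointwise bound
\begin{equation*}
\|v^\mu_t-v^\nu_t\|_{\mathcal{C}^0}\le L_K\bigl(\W_g(\mu^F_t,\nu^F_t)+\W_g(\mu^L_t,\nu^L_t)\bigr)=L_K\,\W_g(\mu_t,\nu_t),
\end{equation*}
obtained exactly as in \eqref{e-Kequilip} combined with Theorem \ref{t-flat}.

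\medskip

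Applying the triangle inequality and estimate \eqref{e-gw2} of Proposition \ref{p-gw} to the first (flow) term, I get a contribution of the form $C_1\int_0^t \W_g(\mu_\tau,\nu_\tau)\,d\tau$. For the Duhamel integral, I split it as
\begin{equation*}
\int_0^t \Phi^{v^\mu}_{(\tau,t)}\#s^{\mu,i}_\tau-\Phi^{v^\nu}_{(\tau,t)}\#s^{\nu,i}_\tau\,d\tau
\end{equation*}
and again insert intermediate terms $\Phi^{v^\mu}_{(\tau,t)}\#s^{\nu,i}_\tau$: the first difference is controlled by \eqref{e-gw2} (flow comparison on a measure of uniformly bounded mass), the second by \eqref{e-gwLip} together with Theorem \ref{t-flat} (the push-forward is an isometry of $\W_g$). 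Summing over $i\in\{F,L\}$ yields
\begin{equation*}
\W_g(\mu_t,\nu_t)\le C\int_0^t \W_g(\mu_\tau,\nu_\tau)\,d\tau
\end{equation*}
for a constant $C$ depending only on $T,M,R,L_K,M_\alpha,L'_{\alpha,M,R}$. Grönwall's lemma then forces $\W_g(\mu_t,\nu_t)\equiv 0$, i.e.\ the two solutions coincide.

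\medskip

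The main obstacle is bookkeeping rather than conceptual: one must verify that the a priori support bound $R$ really depends only on the data so that the Lipschitz constant $L'_{\alpha,M,R}$ in \ref{ass:lipalpha} can be used uniformly in time, and one must handle the fact that the flow $\Phi^{v^\mu}$ depends on the whole solution (hence the comparison estimate \eqref{e-gw2} feeds the quantity we want to bound back into the right-hand side, producing exactly the Grönwall structure above). Once the uniform support and mass bounds are in place, the two-term Duhamel decomposition plus \eqref{e-gw2} and \eqref{e-gwLip} does the rest.
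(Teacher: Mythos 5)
Your strategy is essentially the one the paper uses: rely on Proposition \ref{p-ex} for existence, derive uniform mass and support bounds, view each solution as the unique solution of a \emph{linear} inhomogeneous continuity equation via Duhamel's formula \eqref{e-duhamel}, then compare the two solutions in $\W_g$ using \eqref{e-gw2}, \eqref{e-gwLip} and Kantorovich--Rubinstein duality (Theorem \ref{t-flat}). Good: the mass identity, the uniform support radius $R$, the Lipschitz estimate $\|v^\mu_t-v^\nu_t\|_{\mathcal{C}^0}\le L_K\W_g(\mu_t,\nu_t)$, and the Duhamel splitting with intermediate terms all match the paper's argument.

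There is, however, a technical imprecision in the final step. You claim that applying \eqref{e-gw2} to the flow term yields a contribution of the form $C_1\int_0^t\W_g(\mu_\tau,\nu_\tau)\,d\tau$, and then close with the classical integral Gr\"onwall lemma. But \eqref{e-gw2} does not produce an integral: it produces $\sup_{\tau\in[0,t]}\|v^\mu_\tau-v^\nu_\tau\|_{\mathcal{C}^0}$ multiplied by a coefficient that behaves like $t$ for small $t$. After inserting $\|v^\mu_\tau-v^\nu_\tau\|_{\mathcal{C}^0}\le L_K\W_g(\mu_\tau,\nu_\tau)$, the flow contribution is bounded by $C\,t\,\sup_{\tau\in[0,t]}\W_g(\mu_\tau,\nu_\tau)$ (and similarly for the flow-comparison term appearing inside the Duhamel integral), not by $C\int_0^t\W_g\,d\tau$. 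Consequently the paper does \emph{not} invoke integral Gr\"onwall; instead, setting $\varepsilon(t)=\sup_{\tau\le t}\W_g(\mu_\tau,\nu_\tau)$, it derives $\varepsilon(t)\le C_t\varepsilon(t)$ with $C_t$ continuous, increasing and $C_t\to 0$ as $t\to 0$, concludes $\varepsilon\equiv 0$ on a short time interval, and iterates. Your argument can be repaired by switching to this ``shrinking coefficient'' closure, or alternatively by proving a pointwise (differential) analogue of \eqref{e-gw2} from scratch, which the paper does not provide. A second, minor slip: the push-forward by a common flow $\Phi^{v^\mu}_{(\tau,t)}$ is \emph{not} an isometry of $\W_g$; by \eqref{e-gw2} with $v=w$ one gets the factor $e^{2L(t-\tau)}$, which the paper carries through and which ultimately enters the constant $C_t$.
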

\begin{proof} Existence of a solution was proved in Propostion \ref{p-ex}. We now prove uniqueness.

Let $\mu,\nu$ be two solutions of \eqref{eq:macroleadfollstrong}, in the sense of Definition \ref{def:solmacroleadfoll}. They are both continuous with respect to the topology of weak convergence of measures (Condition 2) and have equi-bounded support (Condition 3). By choosing $\varphi \in \mathcal{C}^1_c(\R^d)$ satisfying $\varphi\equiv 1$ on such equi-bounded support and using \ref{h4}, it holds
\begin{eqnarray*}
\partial_t |\mu^F_t|\leq 0 + M_\alpha (|\mu^F_t|+|\mu^L_t|),
\end{eqnarray*}
and similarly for $|\mu^L_t|$. This implies $|\mu^F_t|+|\mu^L_t|\leq e^{2M_\alpha t}(|\mu^F_0|+|\mu^L_0|)$, hence masses are equi-bounded too.

For the given solution $\mu_t$, define the corresponding vector field and source term $$w_t:=\sum_{j\in\{F,L\}}K^{j}\conv \mu^j_t,\qquad s_t:=-\alpha_F(\mu^F_t,\mu^L_t) \mu^F_t+\alpha_L(\mu^F_t,\mu^L_t)\mu^L_t.$$ Consider them as time-varying operators, not depending on $\mu$. 
By construction, it holds 
\begin{eqnarray}
\partial_t \mu^F_t=-\mathrm{div}(w_t\mu^F_t)+s_t.
\label{e-mut}
\end{eqnarray} Observe that $w_t$ is a time-varying vector field, continuous with respect to the time variable and uniformly Lipschitz with respect to the space variable, due to \ref{h3}, \eqref{e-Kequilip} and equi-boundedness of $|\mu_t|$. It is then a Carath{\'e}odory function. Moreover, $s_t$ is continuous with respect to time, with uniformly bounded mass due to \ref{h4} and with uniformly bounded support due to \ref{h5}. Then, hypotheses of Lemma \ref{le:equivPDEODEtransp} are satisfied, hence $\mu^F_t$ is the unique solution of \eqref{e-mut} and it satisfies the Duhamel's formula \eqref{e-duhamel}.
It is clear that the previous properties hold for $\mu^L$ too, with the same vector field $w_t$ and source $-s_t$. Moreover, the same properties hold for $\nu^F$ too, with vector field and source term
$$w'_t:=\sum_{j\in\{F,L\}}K^{j}\conv \nu^j_t,\qquad s'_t:=-\alpha_F(\nu^F_t,\nu^L_t) \nu^F_t+\alpha_L(\nu^F_t,\nu^L_t)\nu^L_t,$$
as well as for $\nu^L_t$, with $w'_t$ and $-s'_t$.

We now compute $\W_g(\mu_t,\nu_t)$ by using the Duhamel's formula and the Kantorovich-Rubinstein duality. Take $f$ such that $\|f\|_{\mathcal{C}^0},\mathrm{Lip}(f)\leq 1$ and compute
\begin{eqnarray}
&&\int_{\R^d} f\,d(\mu^F_t-\nu^F_t)= \int_{\R^d} f\,d(\Phi^{w_t}_t\#\mu^F_0-\Phi^{w'_t}_t\#\nu^F_0)+\nonumber\\
&&\int_0^t d\tau \int_{\R^d} f\,d({\Phi^{w}_{(\tau,t)}}\#s_\tau-{\Phi^{w'}_{(\tau,t)}}\#s'_\tau)\leq \gw{\Phi^{w_t}_t\#\mu^F_0,\Phi^{w'_t}_t\#\nu^F_0}+\nonumber\\
&&\int_0^t d\tau  \int_{\R^d} f\,d\left(-{\Phi^{w}_{(\tau,t)}}\#\alpha_F(\mu^F_\tau,\mu^L_\tau) \mu^F_\tau+{\Phi^{w'}_{(\tau,t)}}\#\alpha_F(\nu^F_\tau,\nu^L_\tau) \nu^F_\tau\right)+\nonumber\\
&&\int_0^t d\tau  \int_{\R^d} f\,d\left({\Phi^{w}_{(\tau,t)}}\#\alpha_L(\mu^F_\tau,\mu^L_\tau) \mu^L_\tau-{\Phi^{w'}_{(\tau,t)}}\#\alpha_L(\nu^F_\tau,\nu^L_\tau) \nu^L_\tau\right)\leq\nonumber\\
&&e^{2Lt}\gw{\mu^F_0,\nu^F_0}+|\mu^F_0|\frac{e^{2L t}(e^{Lt}-1)}{L} \sup_{\tau\in[0,t]}\{\|w_\tau-w'_\tau\|_{\mathcal{C}^0}\}+\label{e-dmunu}\\
&&\int_0^t d\tau e^{2L(t-\tau)}\left(\gw{\alpha_F(\mu^F_\tau,\mu^L_\tau) \mu^F_\tau,\alpha_F(\nu^F_\tau,\nu^L_\tau) \nu^F_\tau}+\right.\nonumber\\
&& \hspace{2cm}\left. \gw{\alpha_L(\mu^F_\tau,\mu^L_\tau) \mu^L_\tau,\alpha_L(\nu^F_\tau,\nu^L_\tau) \nu^L_\tau}\right)+\nonumber\\
&&\int_0^t d\tau(|\alpha_F(\mu^F_0,\mu^L_0) \mu^F_0|+|\alpha_L(\mu^F_0,\mu^L_0) \mu^L_0|)\frac{e^{2L (t-\tau)}(e^{L(t-\tau)}-1)}{L} \sup_{\tau'\in[\tau,t]}\{\|w_{\tau'}-w'_{\tau'}\|_{\mathcal{C}^0}\},\nonumber
\end{eqnarray}
where $L$ is a Lipschitz constant for both $w_\tau,w'_\tau$, that exists by \eqref{e-Kequilip}, and where we also used \eqref{e-gwLip}. Observe that it holds
\begin{eqnarray}
 |(K^F\conv \mu^F_t)(x)-(K^F\conv\nu^F_t)(x)|&\leq&\left|\int_{\R^d} K^F(z-x)\,d(\mu^F_t(z)-\nu^F_t(z))\right|\leq\nonumber\\
 && L_K\gw{\mu^F_t,\nu^F_t}
 \label{e-Kmunu}
 \end{eqnarray}
where we used the Kantorovich-Rubinstein duality and \ref{h2}. The same estimate holds for $K^L$, thus $|w_t(x)-w'_t(x)|\leq L_K\gw{\mu_t,\nu_t}$. 

Going back to \eqref{e-dmunu}, recall that $\gw{\mu_0,\nu_0}=0$, since the initial data coincide. Also apply the estimate \eqref{e-gwLip} and hypothesis \ref{h4}. Define $$\varepsilon(t):=\sup_{\tau\in[0,t]}\gw{\mu_\tau,\nu_\tau}$$ and observe that it holds
 \begin{eqnarray*}
 &&\int_{\R^d} f\,d(\mu^F_t-\nu^F_t)\leq 0+ |\mu^F_0|\frac{e^{2L t}(e^{Lt}-1)}{L} L_K \varepsilon(t)+
 \frac{e^{2Lt}-1}{L} L'_{\alpha,M,R} \varepsilon(t)+\nonumber\\
 &&M_\alpha(|\mu^F_0|+|\mu^L_0|)\frac{(e^{2L t}-1)(e^{Lt}-1)}{L}L_K\varepsilon(t)
 \end{eqnarray*}
 Since the left hand side does not depend on $f$, one can take the supremum over $f$ satisfying $\|f\|_{\mathcal{C}^0},\mathrm{Lip}(f)\leq 1$, i.e. replace it with $\gw{\mu^F_t,\nu^F_t}$. The equivalent estimate holds for $\gw{\mu^L_t,\nu^L_t}$. Merging them, it holds 
 \begin{eqnarray}
 \gw{\mu_t,\nu_t}\leq C_t  \varepsilon(t),
 \label{e-utilequi}
 \end{eqnarray} with 
 $$C_t:= (|\mu^F_0|+|\mu^L_0|)\frac{(e^{Lt}-1)}{L} L_K \left(e^{2Lt}+2M_\alpha (e^{2L t}-1)\right)+
 2\frac{e^{2Lt}-1}{L} L'_{\alpha,M,R}.$$
 
 Since the right hand side in \eqref{e-utilequi} is an increasing function with respect to $t$, one can replace $ \gw{\mu_t,\nu_t}$ with $\varepsilon(t)$ on the left hand side.  It then holds
 $$\varepsilon(t)\leq C_t \varepsilon(t).$$
 Since $\lim_{t\to 0} C_t=0$ and $C_t$ is continuous, it holds $\varepsilon(t)=0$ for $t$ sufficiently small. By iterating the estimate, this holds for any $t\in[0,T]$, thus $\mu_t=\nu_t$ for all $t\in[0,T]$.
\end{proof}

\subsection{Equivalence between systems \eqref{eq:macroleadfollstrong} and \eqref{eq:macronusigma}}

We now prove that, if \ref{ass:posmeas0} is satisfied, then systems \eqref{eq:macroleadfollstrong} and \eqref{eq:macronusigma} are equivalent, in the sense that there exists a bijection between solutions. We also use this equivalence to prove existence and uniqueness of solutions to system \eqref{eq:macronusigma}.
\begin{proposition} \label{p-equivalenza}
 Let $(\mu^F_t,\mu^L_t)$ be a solution to system \eqref{eq:macroleadfollstrong}, such that $(\mu^F_0,\mu^L_0)$ satisfies \ref{ass:posmeas0}. Assume that hypotheses \ref{ass:lipK}-\ref{ass:sublinK}-\ref{ass:supalpha}-\ref{ass:lipalpha} hold. Define
\begin{equation}
\label{e-defnusigma}
\nu_t:=\mu^F_t+\mu^L_t,\qquad (\sigma_t(F),\sigma_t(L)):=\left(\frac{|\mu^F_t|}{|\nu_t|},\frac{|\mu^L_t|}{|\nu_t|}\right).
\end{equation}
Then, $(\nu_t,\sigma_t)$ is a solution to system \eqref{eq:macronusigma}.

Conversely, let the hypotheses \ref{ass:lipK}-\ref{ass:sublinK}-\ref{ass:supalpha}-\ref{ass:lipalpha} hold and let $(\nu_t,\sigma_t)$ be a solution to system \eqref{eq:macronusigma}. Define 
\begin{equation}
\mu^F_t=\sigma_t(F)\nu_t,\qquad \mu^L_t=\sigma_t(L)\nu_t.\label{e-defmu}
\end{equation}
Then, $(\mu^F_t,\mu^L_t)$ is a solution to system \eqref{eq:macroleadfollstrong}.
\end{proposition}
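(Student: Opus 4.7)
My plan is to prove the two implications separately. The direction \eqref{eq:macronusigma} $\Rightarrow$ \eqref{eq:macroleadfollstrong} is a direct verification, while the converse requires a linear-uniqueness argument to propagate the reconstruction $\mu^F_t = \sigma_t(F)\nu_t$ from $t=0$ to all times.

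\textbf{Easier direction.} Given $(\nu_t,\sigma_t)$ solving \eqref{eq:macronusigma} and setting $\mu^i_t := \sigma_t(i)\nu_t$ for $i\in\{F,L\}$, the support and weak-continuity requirements of Definition \ref{def:solmacroleadfoll} are inherited from the analogous properties of $\nu_t$ and the absolute continuity of $\sigma_t$ in Definition \ref{def:solnusigma}. For the PDE, I would differentiate the identity $\int\varphi\,d\mu^F_t = \sigma_t(F)\int\varphi\,d\nu_t$ for $\varphi\in\mathcal{C}^1_c(\R^d)$, substitute the ODE for $\sigma_t(F)$ and the continuity equation for $\nu_t$, and then use the shortcuts \eqref{eq:alphashortcut} and \eqref{e-vf2} to rewrite $\alpha_i(\nu_t,\sigma_t)=\alpha_i(\mu^F_t,\mu^L_t)$ and $\scalarp{K,\nu_t\times\sigma_t} = K^F\conv\mu^F_t + K^L\conv\mu^L_t$. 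This recovers the first equation of \eqref{eq:macroleadfollstrong}; the equation for $\mu^L_t$ is symmetric.

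\textbf{Converse direction, setup.} Starting from a solution $(\mu^F,\mu^L)$ of \eqref{eq:macroleadfollstrong} with initial data satisfying \ref{ass:posmeas0}, define $(\nu_t,\sigma_t)$ by \eqref{e-defnusigma}. First, summing the two equations of \eqref{eq:macroleadfollstrong} makes the reaction terms cancel and yields $\partial_t\nu_t = -\diver(v_t\nu_t)$ with $v_t := K^F\conv\mu^F_t + K^L\conv\mu^L_t$; in particular $|\nu_t|\equiv|\overline\nu|$. Next, testing the $\mu^F_t$-equation against a cutoff $\varphi\in\mathcal{C}^1_c(\R^d)$ that equals $1$ on the uniform support ball $B(0,R_T)$ from Condition $(3)$ of Definition \ref{def:solmacroleadfoll} kills the transport term and produces $\frac{d}{dt}|\mu^F_t| = -\alpha_F(\mu^F_t,\mu^L_t)|\mu^F_t| + \alpha_L(\mu^F_t,\mu^L_t)|\mu^L_t|$. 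Dividing by the constant $|\nu_t|$ gives the ODE for $\sigma_t(F)$ driven by the \emph{frozen} coefficients $\alpha_i(\mu^F_t,\mu^L_t)$; absolute continuity is ensured by \ref{h4}.

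\textbf{Main obstacle.} The nontrivial step is to identify the frozen coefficients with $\alpha_i(\nu_t,\sigma_t)$ and $v_t$ with $\scalarp{K,\nu_t\times\sigma_t}$; by \eqref{eq:alphashortcut} and \eqref{e-vf2} this reduces to the pointwise-in-time identity $\mu^F_t = \sigma_t(F)\nu_t$ (and analogously for $L$). My strategy is a uniqueness argument: set $\tilde\mu^F_t := \sigma_t(F)\nu_t$, $\tilde\mu^L_t := \sigma_t(L)\nu_t$ and verify, by direct differentiation exploiting that $\sigma_t$ is spatially constant together with the equations just derived for $\nu_t$ and $\sigma_t$, that $(\tilde\mu^F,\tilde\mu^L)$ satisfies the same inhomogeneous system as $(\mu^F,\mu^L)$, namely
\[
\partial_t\tilde\mu^F_t = -\diver(v_t\tilde\mu^F_t) - \alpha_F(\mu^F_t,\mu^L_t)\tilde\mu^F_t + \alpha_L(\mu^F_t,\mu^L_t)\tilde\mu^L_t,
\]
and analogously for $\tilde\mu^L_t$. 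The key point is that $v_t$ is now a fixed Carath\'eodory vector field and the coefficients $\alpha_i(\mu^F_t,\mu^L_t)$ are bounded measurable functions of time depending only on the given pair $(\mu^F,\mu^L)$, so this system is \emph{linear}. Thanks to \ref{ass:posmeas0} the two solutions share their initial data, since $\tilde\mu^F_0 = \overline\sigma(F)\overline\nu = \overline\mu^F$ and analogously for $L$. Applying the Duhamel formula \eqref{e-duhamel} to the differences $\mu^F_t - \tilde\mu^F_t$ and $\mu^L_t - \tilde\mu^L_t$, testing against $f$ with $\|f\|_\infty,\mathrm{Lip}(f)\le1$ via Theorem \ref{t-flat}, and bounding the source through \ref{h4} gives a Gronwall-type inequality
\[
\gw{\mu^F_t,\tilde\mu^F_t} + \gw{\mu^L_t,\tilde\mu^L_t} \leq C\int_0^t\bigl(\gw{\mu^F_\tau,\tilde\mu^F_\tau} + \gw{\mu^L_\tau,\tilde\mu^L_\tau}\bigr)\,d\tau,
\]
forcing the two pairs to coincide for all $t\in[0,T]$. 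Once the identity is established, the remaining conditions of Definition \ref{def:solnusigma} follow at once, and the existence and uniqueness for \eqref{eq:macronusigma} transfer from Proposition \ref{p-exun1}.
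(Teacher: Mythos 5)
Your proposal follows essentially the same strategy as the paper's proof: sum the equations to get the $\nu_t$-transport equation and mass conservation, derive the $\sigma_t$-ODE by testing with a cutoff equal to $1$ on the uniform support, then prove the reconstruction identity $\mu^i_t=\sigma_t(i)\nu_t$ by showing that the candidate $\tilde\mu^i_t$ solves the same \emph{frozen-coefficient} linear system as $\mu^i_t$, has the same initial data thanks to \ref{ass:posmeas0}, and coincides by Duhamel's formula plus a Gronwall estimate in $\mathcal{W}_g$ via Kantorovich--Rubinstein duality, while the other direction is a direct differentiation. The only presentational difference is the order of the two implications; the key ideas and estimates are identical.
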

\begin{proof} We prove {\bf Statement 1}.Take $(\mu^F_t,\mu^L_t)$ a solution to system \eqref{eq:macroleadfollstrong} with $(\mu^F_0,\mu^L_0)$ satisfying \ref{ass:posmeas0}. Define $(\nu_t,\sigma_t)$ according to \eqref{e-defnusigma}. By a direct computation, it holds 
\begin{equation}
\partial_t\nu_t=-\diver((K^{F}\conv\mu^F_t + K^{L}\conv\mu^L_t)\nu_t).
\label{e-nut}
\end{equation}
This also implies that $|\nu_t|$ is constant. Define now $\sigma_t$ according to \eqref{e-defnusigma}, and compute
\begin{eqnarray}
\partial_t \sigma_t(F)&=&\frac{\partial_t|\mu^F_t|}{|\nu_t|}=\frac{- \alpha_F(\mu^F_t,\mu^L_t)|\mu^F_t| + \alpha_L(\mu^F_t,\mu^L_t)|\mu^L_t|}{|\nu_t|}=\nonumber\\
&=&- \alpha_F(\mu^F_t,\mu^L_t)\sigma_t(F) + \alpha_L(\mu^F_t,\mu^L_t)\sigma_t(L).\label{e-sigmat}
\end{eqnarray}
We used the fact that $|\nu_t|$ is constant, as a consequence of \eqref{e-nut}, and the definition of $\sigma_t$. One easily recovers $\sigma_t(L)=1-\sigma_t(F)$, hence $\partial_t\sigma_t(L)=-\partial_t\sigma_t(F)$. The difficulty is now to prove that it holds $\mu^F_t=\sigma_t(F)\nu_t, \mu^L_t:=\sigma_t(L)\nu_t$ for all times.

Since $\mu^F_t,\mu^L_t$ are given, one can define the non-autonomous vector field and the coefficients for the source term $$v_t:=K^{F}\conv\mu^F_t + K^{L}\conv\mu^L_t,\qquad h^F_t:=\alpha_F(\mu^F_t,\mu^L_t),\qquad h^L_t:=\alpha_L(\mu^F_t,\mu^L_t).$$ Define $$\tilde \mu^F_t:=\sigma_t(F)\nu_t,\qquad \tilde \mu^L_t:=\sigma_t(L)\nu_t$$ Observe that it holds $\tilde \mu^F_0=\mu^F_0$ and $\tilde \mu^L_0=\mu^L_0$, as a consequence of \ref{ass:posmeas0}. Using \eqref{e-nut}-\eqref{e-sigmat}, it holds 
\begin{eqnarray*}
\partial_t\tilde \mu^F_t&=&-\diver((K^{F}\conv\mu^F_t + K^{L}\conv\mu^L_t)\sigma_t(F)\nu_t)- \alpha_F(\mu^F_t,\mu^L_t)\sigma_t(F)\nu_t +\\
&& \alpha_L(\mu^F_t,\mu^L_t)\sigma_t(L)\nu_t=-\diver(v_t \tilde \mu^F_t)- h^F_t \tilde \mu^F_t + h^L_t \tilde \mu^L_t.
\end{eqnarray*}
One similarly has $\partial_t\tilde \mu^L_t=-\diver(v_t \tilde \mu^L_t)+ h^F_t \tilde \mu^F_t - h^L_t \tilde \mu^L_t$. By construction, it also holds
$$\partial_t  \mu^F_t=-\diver(v_t   \mu^F_t)- h^F_t   \mu^F_t + h^L_t   \mu^L_t,\qquad
\partial_t  \mu^L_t=-\diver(v_t   \mu^L_t)+ h^F_t   \mu^F_t - h^L_t   \mu^L_t.$$

Take $f$ such that $\|f\|_{\mathcal{C}^0},\mathrm{Lip}(f)\leq 1$, and apply the Duhamel's formula for both $\mu_t,\tilde\mu_t$. It holds
\begin{eqnarray}
&&\int_{\R^d} f\,d(\mu^F_t-\tilde\mu^F_t)= \int_{\R^d} f\,d(\Phi^{v_t}_t\#\mu^F_0-\Phi^{v_t}_t\#\tilde\mu^F_0)+\label{e-lhs}\\
&&\int_0^t d\tau \left[h^F_\tau \int_{\R^d} f\,d(\Phi^{v_t}_{(\tau,t)}\#\tilde\mu^F_\tau-\Phi^{v_t}_{(\tau,t)}\#\mu^F_\tau)+h^L_\tau\int_{\R^d} f\,d(\Phi^{v_t}_{(\tau,t)}\#\mu^L_\tau-\Phi^{v_t}_{(\tau,t)}\#\tilde\mu^L_\tau)\right] \leq\nonumber\\
&&0+\int_0^t d\tau (h^F_\tau\gw{\Phi^{v_t}_{(\tau,t)}\#\mu^F_\tau,\Phi^{v_t}_{(\tau,t)}\#\tilde\mu^F_\tau}+h^L_\tau\gw{\Phi^{v_t}_{(\tau,t)}\#\mu^L_\tau,\Phi^{v_t}_{(\tau,t)}\#\tilde\mu^L_\tau}.\label{e-rhs}
\end{eqnarray}
Here we used the fact that $\mu^F_0=\tilde\mu^F_0$ implies $\Phi^{v_t}_t\#\mu^F_0=\Phi^{v_t}_t\#\tilde\mu^F_0$, as well as the Kantorovich-Rubinstein duality. Denote with $L$ a Lipschitz constant for $v_t$, that exists due to \eqref{e-Kequilip}, and apply \eqref{e-gw2}.  Observe that \eqref{e-rhs} does not depend on $f$, thus one can take the supremum in the left hand side of \eqref{e-lhs} with $\|f\|_{\mathcal{C}^0},\mathrm{Lip}(f)\leq 1$, i.e. replace it with $\gw{\mu^F_\tau,\tilde\mu^F_\tau}$. Also observe that \ref{h4} implies $|h^F_\tau|,|h^L_\tau|\leq M_\alpha$. By defining $\varepsilon(t):=\sup_{\tau\in[0,t]} \gw{\mu_\tau,\tilde\mu_\tau}$, it holds
$$\gw{\mu^F_\tau,\tilde\mu^F_\tau}\leq t e^{2Lt}M_\alpha \varepsilon(t),$$ and the same holds for $\gw{\mu^L_\tau,\tilde\mu^L_\tau}$. 

Observe that the right hand side is increasing with respect to $t$, thus one can replace the left hand side with $\varepsilon(t)$. It then holds $\varepsilon(t)\leq 2t e^{2Lt}M_\alpha \varepsilon(t)$, thus $\varepsilon(t)=0$ for $t$ sufficiently small. Applying then the result iteratively, it holds $\varepsilon(t)=0$ for all $t\in[0,T]$, then $\mu_t=\tilde\mu_t$, thus $$\mu^F_t=\sigma_t(F)\nu_t \qquad \text{ and } \qquad \mu^L_t=\sigma_t(L)\nu_t.$$

We prove {\bf Statement 2}. Since $(\nu_t,\sigma_t)$ is a solution to system \eqref{eq:macronusigma} in the sense of Definition \ref{def:solnusigma}, then $(\mu^F_t,\mu^L_t)$ defined by \eqref{e-defmu} satisfies Conditions 2 and 3 of Definition \ref{def:solmacroleadfoll}. Condition 1 is also satisfied, by trivially choosing $\bar \mu^F=\mu^F_0$ and $\bar\mu^L=\mu^L_0$. We are left to prove that Condition 4 is satisfied: the proof is direct, by computing derivatives.

\end{proof}

As a corollary to Proposition \ref{p-equivalenza}, we prove existence and uniqueness of solutions to system \eqref{eq:macronusigma}.
\begin{corollary}\label{cor:nusigmaexistence} Let the hypotheses \ref{ass:lipK}-\ref{ass:sublinK}-\ref{ass:supalpha}-\ref{ass:lipalpha} hold. Then, for each initial data $(\overline{\nu},\overline{\sigma})\in\mathcal{M}(\R^d)\times\mathcal{P}(\{F,L\})$, there exists a unique solution to system \eqref{eq:macronusigma}.
\end{corollary}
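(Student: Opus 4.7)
The plan is to leverage Proposition \ref{p-equivalenza} together with Proposition \ref{p-exun1} to transfer the well-posedness of \eqref{eq:macroleadfollstrong} to \eqref{eq:macronusigma}. Given an initial datum $(\overline{\nu},\overline{\sigma})\in\mathcal{M}_c(\R^d)\times\mathcal{P}(\{F,L\})$, I would first build a corresponding initial datum for the leader-follower system by setting $\overline{\mu}^F\defin\overline{\sigma}(F)\overline{\nu}$ and $\overline{\mu}^L\defin\overline{\sigma}(L)\overline{\nu}$. By construction, this couple belongs to $\mathcal{M}_c(\R^d)\times\mathcal{M}_c(\R^d)$ and satisfies Assumption \ref{ass:posmeas0}.

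For existence, I would apply Proposition \ref{p-exun1} to obtain a (unique) solution $(\mu^F_t,\mu^L_t)$ of \eqref{eq:macroleadfollstrong} with this initial datum, and then invoke Statement 1 of Proposition \ref{p-equivalenza}, which produces a solution $(\nu_t,\sigma_t)$ of \eqref{eq:macronusigma}. A short check shows that the induced initial datum equals $(\overline{\nu},\overline{\sigma})$: indeed $\nu_0=\mu^F_0+\mu^L_0=(\overline{\sigma}(F)+\overline{\sigma}(L))\overline{\nu}=\overline{\nu}$, while $\sigma_0(i)=|\mu^i_0|/|\nu_0|=\overline{\sigma}(i)$ for $i\in\{F,L\}$ (the case $|\overline{\nu}|=0$ being trivial since then every measure is zero and the only solution is the null one).

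For uniqueness, suppose $(\nu_t,\sigma_t)$ and $(\nu'_t,\sigma'_t)$ are two solutions of \eqref{eq:macronusigma} starting from $(\overline{\nu},\overline{\sigma})$. Apply Statement 2 of Proposition \ref{p-equivalenza} to each: this yields two solutions $(\mu^F_t,\mu^L_t)$ and $(\mu'^F_t,\mu'^L_t)$ of \eqref{eq:macroleadfollstrong}, both starting from the common initial datum $(\overline{\sigma}(F)\overline{\nu},\overline{\sigma}(L)\overline{\nu})$. By the uniqueness part of Proposition \ref{p-exun1}, these two solutions coincide for every $t\in[0,T]$, so $\mu^F_t=\mu'^F_t$ and $\mu^L_t=\mu'^L_t$. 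Summing, $\nu_t=\mu^F_t+\mu^L_t=\mu'^F_t+\mu'^L_t=\nu'_t$; dividing by the (constant) total mass when positive, or handling trivially when it vanishes, we likewise obtain $\sigma_t=\sigma'_t$.

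No step poses a genuine obstacle here: the content is entirely packaged into the two preceding propositions, and the only minor point to watch is the degenerate case $|\overline{\nu}|=0$, where $\sigma_t$ is not determined by \eqref{e-defnusigma} but the dynamics \eqref{e-A}--\eqref{eq:macronusigma} reduces to the ODE $\dot\sigma_t=A_{0,\sigma_t}\sigma_t$ on the simplex $\mathcal{P}(\{F,L\})$, which is itself well-posed by Cauchy--Lipschitz thanks to \ref{ass:supalpha}--\ref{ass:lipalpha}; one may also include this situation by simply taking $\overline{\mu}^F=\overline{\mu}^L=0$ and recovering $\sigma$ directly from the ODE part.
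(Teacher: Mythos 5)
Your argument is correct and follows exactly the paper's own route: build $\overline{\mu}^F=\overline{\sigma}(F)\overline{\nu}$, $\overline{\mu}^L=\overline{\sigma}(L)\overline{\nu}$, then shuttle existence and uniqueness between the two systems via Proposition \ref{p-exun1} and the two directions of Proposition \ref{p-equivalenza}. The only thing you add beyond the paper is the explicit treatment of the degenerate case $|\overline{\nu}|=0$, where the inversion $\sigma_t(i)=|\mu^i_t|/|\nu_t|$ breaks down; the paper glosses over this (it just says ``the relation \eqref{e-defmu} is invertible''), so your observation that one must fall back on the autonomous ODE for $\sigma_t$ in that case is a small but genuine improvement in rigor.
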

\begin{proof} For the existence part, define $$\bar\mu^F:=\overline{\sigma}(F)\overline{\nu},\qquad \bar\mu^L:=\overline{\sigma}(F)\overline{\nu}$$ and consider the corresponding solution $(\mu^F_t,\mu^L_t)$ to \eqref{eq:macroleadfollstrong}, that exists due to Proposition \ref{p-exun1}. Then, there exists a corresponding solution $(\nu_t,\sigma_t)$ to system \eqref{eq:macronusigma}, due to the first statement of Proposition \ref{p-equivalenza}. Such solution satisfies $(\nu_0,\sigma_0)=(\overline{\nu},\overline{\sigma})$, by construction.

For the uniqueness part, assume that there exist two solutions $(\nu_t,\sigma_t),(\tilde\nu_t,\tilde\sigma_t)$ to \eqref{eq:macronusigma} with the same initial data $(\overline{\nu},\overline{\sigma})$. Due to the second statement of Proposition \ref{p-equivalenza}, for each of the two solutions to \eqref{eq:macronusigma} there exists a solution $(\mu^F_t,\mu^L_t),(\tilde\mu^F_t,\tilde\mu^L_t)$ to system \eqref{eq:macroleadfollstrong}. It clearly holds $(\mu^F_0,\mu^L_0)=(\tilde\mu^F_0,\tilde\mu^L_0)$, then such two solutions coincide, due to uniqueness of the solution to system \eqref{eq:macroleadfollstrong}. Since the relation \eqref{e-defmu} is invertible, this implies $(\nu_t,\sigma_t)=(\tilde\nu_t,\tilde\sigma_t)$.
\end{proof}

\begin{remark}\label{rem:servepersteer}
By inspection of our proofs, other types of measure-dependent velocity fields can be encompassed by our approach, as long as the dependence is Lipschitz with repect to $\mathcal{W}_g$  (see, e.g., \cite{gw}). For instance, instead of the convolution term 
$
K^i * \mu^i
$
for $i=F$ or $i=L$, one could simply consider a weighted velocity of the form
$
|\mu^i|K^i(x)
$
which still allows for proving the existence, uniqueness and equivalence results of this section. 
Accordingly, in the equivalent system to be considered in Proposition \ref{p-equivalenza} one has to consider a velocity field of the form (if $i=L$ in the equation above)
\[
\sigma_t(F) K^F\conv \nu_t + \sigma_t(L) K^L
\]
for which also the mean-field derivation of Section \ref{sec:meanfield} can be performed without changing the proofs.
\end{remark}

\section{A mean-field description of the leader-follower dynamics}\label{sec:meanfield}

In this section we shall provide a mean-field description of system \eqref{eq:macronusigma}. To do this, we shall first introduce for every $N\in\N$ a particle system which consists of a transport part for the evolution over the state space $\R^d$ and a jump part for the change of label in $\{F,L\}$.

The connection between systems of interacting particles and nonlinear evolution equations has been studied by many authors, going back to McKean \cite{mckean1967propagation}; for detailed expositions on this topic, the reader may consult Sznitman \cite{snitzman} or M\'el\'eard \cite{meleard}. A central point of this connection is the introduction of a nonlinear \textit{averaged} particle system associated with the original one, whose marginal laws appear explicitly (and nonlinearly) in the generator of its dynamics. When the interactions are regular and the particles are exchangeable, a unique nonlinear process exists, and it describes the limiting behavior of one particle of the original system when their number tends to infinity. One further has the propagation of chaos property, which is in this case equivalent to a trajectorial law of large numbers and yields the final mean-field limit result.

\subsection{Definition of the stochastic processes}

Throughout this section, we shall fix $\overline{\nu}\in\mathcal{P}_1(\R^d)$ and $\overline{\sigma}\in\mathcal{P}_1(\{F,L\})$, as well as, for every $N > 0$, a sample of $N$ particles from $\overline{\nu}\times\overline{\sigma}$, i.e.,
$$(X^{i,N}_0,\state^{i,N}_0)_{i = 1}^N \sim \overline{\nu}\times\overline{\sigma}.$$ We assume that $\overline{\nu}$ has compact support in $\R^d$.

We introduce the stochastic processes $(X^{1,N}_t,\state^{1,N}_t),\ldots,(X^{N,N}_t,\state^{N,N}_t)$ defined for every $t \in [0,T]$ and $i = 1,\ldots,N$ as follows
	\begin{itemize}
		\item the initial conditions are $(X^{i,N}_0,\state^{i,N}_0)_{i = 1}^N$,
		\item we set
		\begin{align}\label{eq:empmean}
		\nu^N_t \defin \frac1N \sum^N_{i = 1}\delta_{X^{i,N}_t} \quad \text{ and } \quad \sigma^N_t \defin \frac1N \sum^N_{i = 1}\delta_{\state^{i,N}_t},
		\end{align}
		\item it holds
                \begin{align}\label{stoc-X}
               dX^{i,N}_t = \langle K, \nu^N_t\times\sigma^N_t\rangle (X^{i,N}_t)dt\,
               \end{align}
		\item the conditional transition rates for $\state^{i, N}$ at time $t$, for a realization of $(\nu ^N_t,\sigma^N_t)$,  are given by
		\begin{itemize}
			\item if $\state^{i,N}_t = F$ then $F \rightarrow L$ with rate $\alpha_F(\nu ^N_t,\sigma^N_t)$,
			\item if $\state^{i,N}_t = L$ then $L \rightarrow F$ with rate $\alpha_L(\nu^N_t,\sigma^N_t)$,
		\end{itemize}
	where we used the shorthand notation \eqref{eq:alphashortcut} for $\alpha_F$ and $\alpha_L$.
	\end{itemize}

More formally, we define the processes $(\state^{1,N}_t,\ldots,\state^{N,N}_t)$ to be the jump processes such that $\Law(\state^{i,N}_t)$ satisfy the system of ODEs
\begin{align}\label{stoc-Y}
\frac{d}{dt}\Law(\state^{i,N}_t) = \mathbb{E}\left(\appstate{\nu^N_t}{\sigma^N_t}\right)\Law(\state^{i,N}_t) \quad i = 1,\ldots,N.
\end{align}
Notice that \eqref{stoc-Y} clearly stems out of the above definitions and the law of total probability,  averaging on all realizations of $(\nu ^N_t,\sigma^N_t)$.

\begin{remark}
We shortly discuss the well-posedness of the above defined processes, leaving the details to the reader.

For a realization of  $(\state^{1,N}_t,\ldots,\state^{N,N}_t)$ in the space of c\'adl\'ag functions, the applications $\langle K, \nu^N_t\times\sigma^N_t\rangle$ and $\appstate{\nu^N_t}{\sigma^N_t}$ are both measurable and bounded in time. Thus, \eqref{stoc-Y} has a right-hand side which is measurable and bounded with respect to $t$ and Lipschitz continuous with resect to $X$, uniformly for $t\in [0,T]$. Hence, the existence of Lipschitz continuous solutions to \eqref{stoc-X} uniquely determined by the initial data follows directly from the general theory in \cite{filipov}.

Concerning the stochastic processes $\state^{i,N}_t$ with law given by \eqref{stoc-Y}, they can be, for instance, realised as limit of discrete-in-time processes of the form
\[
\zweivect{\mathbb{P}\left\{\state^{i,N}_{t+h}=F|(\nu^N_t, \sigma^N_t)=(\tilde\nu, \tilde\sigma)\right\}}{\mathbb{P}\left\{\state^{i,N}_{t+h}=L|(\nu^N_t,\sigma^N_t)=(\tilde\nu, \tilde\sigma)\right\}} =\left(I+h \appstate{\tilde\nu}{\tilde\sigma}\right)\zweivect{\mathbb{P}\left\{\state^{i,N}_{t}=F\right\}}{\mathbb{P}\left\{\state^{i,N}_{t}=L\right\}}\quad i = 1,\ldots,N
\]
for $I$ being the identity matrix and $h>0$ a vanishing time step.	In the equation above notice that, since by construction the vector $(1,1)^T$ 
belongs to the kernel of the transpose  $\appstate{\tilde\nu}{\tilde\sigma}^*$ of $\appstate{\tilde\nu}{\tilde\sigma}$ for every realization $ (\tilde\nu, \tilde\sigma)$ of $(\nu ^N_t,\sigma^N_t)$,  the left-hand side above is well-defined as a conditional probability law on $\{F,L\}$.
\end{remark}

\begin{remark}\label{rem:boundedsupp}
Since $\overline{\nu}$ has compact support in $\R^d$, and using \eqref{e-vf2}, standard arguments (see, e.g. \cite[Appendix]{MFOC}) entail that it exists $R_T>0$ such that, for all  $N \in \mathbb{N}$, $i=1,\dots,N$ and $t \in [0, T]$, it holds
\begin{equation}\label{eq:a-priori}
|X^{i, N}_t|\le R_T, \mbox{ so that  supp}(\nu^N_t) \subset B(0, R_T)\,.
\end{equation}
This inclusion has clearly to be understood as being verified with probability $1$.
\end{remark}

Our next step is defining, for fixed $i$ and $N$, an auxiliary averaged process $(\overline{X}^{i,N}_t,\overline{\state}^{i,N}_t)$ having the solutions $\nu_t$ and $\sigma_t$ of system \eqref{eq:macronusigma} as laws. To this purpose, we need some preparation which will be useful also in the sequel.

\begin{proposition}\label{prop:equivPDEODEoverline} 
	Let $(\nu_t,\sigma_t)$ be a solution of \eqref{eq:macronusigma} and define a process $(\overline{X}_t,\overline{\state}_t)$ as follows
	\begin{itemize}
		\item $\overline{X}_0 \sim \nu_0$ and $\overline{\state}_t \sim \sigma_0$,
		\item $d\overline{X}_t = \langle K, \nu_t\times\sigma_t\rangle (\overline{X}_t)dt$,
		\item the transition rates at time $t$ are given by
		\begin{itemize}
			\item if $\overline{\state}_t = F$ then $F \rightarrow L$ with rate $\alpha_F(\nu_t,\sigma_t)$,
			\item if $\overline{\state}_t = L$ then $L \rightarrow F$ with rate $\alpha_L(\nu_t,\sigma_t)$.
		\end{itemize}
	\end{itemize}
	Then $\nu_t = \Law(\overline{X}_t)$ and $\sigma_t = \Law(\overline{\state}_t)$.
\end{proposition}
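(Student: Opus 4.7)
The plan is to observe that the dynamics of $\overline{X}_t$ and $\overline{\state}_t$ depend on $(\nu_t,\sigma_t)$, which by hypothesis are deterministic and already known. This decouples the two components: $\overline{X}_t$ is the (random) solution of an ODE with a deterministic time-dependent vector field starting from a random initial datum distributed as $\nu_0$, while $\overline{\state}_t$ is a time-inhomogeneous two-state Markov chain with deterministic transition rates. I would therefore treat the two marginals separately and, in each case, identify an ODE/PDE that their laws satisfy, and then invoke uniqueness to match them with $\nu_t$ and $\sigma_t$, respectively.

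For the first marginal I would proceed as follows. Define the vector field
\[
v_t(x) := \scalarp{K,\nu_t\times\sigma_t}(x) = \sigma_t(F)(K^F\conv\nu_t)(x) + \sigma_t(L)(K^L\conv\nu_t)(x).
\]
Using \ref{ass:lipK}--\ref{ass:sublinK}, the boundedness of $\sigma_t(F),\sigma_t(L)\in[0,1]$, and the bounded support and equi-boundedness of $\nu_t$ encoded in Definition \ref{def:solnusigma}, one checks that $v_t$ is a Carath\'eodory function in the sense of Section 2: Lipschitz in $x$ with a constant uniform in $t$ (via the analogue of \eqref{e-Kequilip}), and measurable in $t$ thanks to the weak continuity of $\nu_t$ and the absolute continuity of $\sigma_t$. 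Since $\overline{X}_t$ solves $\dot x = v_t(x)$ with $\overline{X}_0\sim\nu_0$, Lemma \ref{le:equivPDEODEtransp} gives $\Law(\overline{X}_t) = \Phi^{v}_t\#\nu_0$, and the same lemma identifies this push-forward as the unique weak solution of
\[
\partial_t\mu_t = -\diver(v_t\mu_t),\qquad \mu_0 = \nu_0.
\]
But by Definition \ref{def:solnusigma} point (4), $\nu_t$ itself solves this very equation with the same initial datum, so $\nu_t = \Law(\overline{X}_t)$ follows from uniqueness.

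For the second marginal, let $p_t := (\mathbb{P}(\overline{\state}_t = F),\mathbb{P}(\overline{\state}_t = L))^\top$. The standard forward Kolmogorov derivation for a two-state jump process with deterministic, bounded, measurable-in-time rates $\alpha_F(\nu_t,\sigma_t)$ and $\alpha_L(\nu_t,\sigma_t)$ (bounded by \ref{ass:supalpha}, measurable as above) yields the linear ODE
\[
\dot p_t = \appstate{\nu_t}{\sigma_t}\, p_t,\qquad p_0 = (\overline{\sigma}(F),\overline{\sigma}(L))^\top.
\]
On the other hand, Definition \ref{def:solnusigma} point (4) states that $\sigma_t$ satisfies precisely the same linear ODE with the same initial datum. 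Since the right-hand side is linear and $\appstate{\nu_t}{\sigma_t}$ is bounded and measurable in $t$, this ODE admits a unique absolutely continuous solution, which forces $\Law(\overline{\state}_t) = \sigma_t$.

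The main technical point I expect to need care with is the measurability/regularity of the coefficients $v_t$ and $\appstate{\nu_t}{\sigma_t}$ in time, which is needed to legitimately apply the uniqueness parts of Lemma \ref{le:equivPDEODEtransp} and of the classical ODE theory; but this follows from the regularity built into Definition \ref{def:solnusigma} together with \eqref{e-w1lip}. Everything else is essentially a clean separation-of-variables argument exploiting that the nonlinearity has been absorbed into the data $(\nu_t,\sigma_t)$.
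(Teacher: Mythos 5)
Your proposal is correct and follows essentially the same route as the paper: freeze the deterministic pair $(\nu_t,\sigma_t)$ to linearize the dynamics, show that $\Law(\overline{X}_t)$ and $\Law(\overline{\state}_t)$ satisfy the same (now linear) continuity equation and ODE as $\nu_t$ and $\sigma_t$ with the same initial data, and conclude by the uniqueness guaranteed by Lemma \ref{le:equivPDEODEtransp} and classical linear ODE theory. The only difference is cosmetic: you spell out the Carath\'eodory/measurability checks that the paper leaves implicit, which is a reasonable thing to make explicit.
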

\begin{proof}
        Define $\eta_t = \Law(\overline{X}_t)$ and let $\varphi \in \mathcal{C}^1_c(\R^d)$ be any test function. For $v_t= \langle K, \nu_t\times\sigma_t\rangle$, by definition of $\overline{X}_t$ and linearity of the expected values it holds that
	\begin{align*}
	\partial_t \langle \varphi, \eta_t \rangle  = \partial_t \mathbb{E}[\varphi(\overline{X}_t)] = \langle \varphi, -\diver(v_t\eta_t) \rangle.
	\end{align*}
	The initial condition $\eta_0 = \Law(\overline{X}_0)=\nu_0$ holds by definition. 
	Hence, $\Law(\overline{X}_t)$ is a solution to the PDE \begin{align*}
	\left\{\begin{aligned}\partial_t\eta_t &= -\diver(\langle K,\nu_t\times\sigma_t\rangle\eta_t),\\
	\eta(0) &= \nu_0,
	\end{aligned}\right.
	\end{align*}
	which is unique  by Lemma \ref{le:equivPDEODEtransp}. Since $\nu_t$ solves the same problem,  we get $\nu_t = \Law(\overline{X}_t)$. Moreover, as both $\sigma_t$ and $\Law(\overline{Y}_t)$ are solutions of
	$$\dot{\eta}_t = \appstate{\nu_t}{\sigma_t}\eta_t$$
	with initial condition $\sigma_0$, then again by uniqueness we have $\sigma_t = \Law(\overline{\state}_t)$.
\end{proof}

We are now in a position to define, for fixed $i$ and $N$, the processes $\overline{X}^{i,N}_t$ and $\overline{\state}^{i,N}_t$ through the following dynamics:
	\begin{itemize}
	\item $\overline{X}^{i,N}_0 = X^{i,N}_0$ and $\overline{\state}^{i,N}_0 = \state^{i,N}_0$,
	    \item $\Law(\overline{X}^{i,N}_t) = \nu_t$ and $\Law(\overline{\state}^{i,N}_t) = \sigma_t$,
		\item $d\overline{X}^{i,N}_t = \langle K, \nu_t\times\sigma_t\rangle (\overline{X}^{i,N}_t)dt$,
		\item the transition rates at time $t$ are given by
		\begin{itemize}
			\item if $\overline{\state}^{i,N}_t = F$ then $F \rightarrow L$ with rate $\alpha_F(\nu_t,\sigma_t)$,
			\item if $\overline{\state}^{i,N}_t = L$ then $L \rightarrow F$ with rate $\alpha_L(\nu_t,\sigma_t)$.
		\end{itemize}
	\end{itemize}

The well-posedness of such  processes is indeed a corollary of our previous results.

\begin{corollary}
	The processes $\overline{X}^{i,N}_t$ and $\overline{\state}^{i,N}_t$ exist for every $N\in\N$ and every $i = 1,\ldots,N$.
\end{corollary}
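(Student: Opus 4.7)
The strategy is to assemble the statement directly from three pieces already in hand: the existence of $(\nu_t,\sigma_t)$, the classical theory for Carath\'eodory ODEs, and the construction in Proposition \ref{prop:equivPDEODEoverline}. First, by Corollary \ref{cor:nusigmaexistence} applied to the initial datum $(\overline{\nu},\overline{\sigma})$, there exists a unique solution $(\nu_t,\sigma_t)$ of system \eqref{eq:macronusigma}. In particular $t\mapsto\nu_t$ is weakly continuous with uniformly compactly supported mass, and $t\mapsto\sigma_t$ is absolutely continuous.

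Next, fix $N\in\N$ and $i\in\{1,\ldots,N\}$. The vector field
\[
(t,x)\mapsto \scalarp{K,\nu_t\times\sigma_t}(x)=\sigma_t(F)(K^F\conv\nu_t)(x)+\sigma_t(L)(K^L\conv\nu_t)(x)
\]
is measurable in $t$ (being weakly continuous, by composition with Lipschitz integrands), Lipschitz in $x$ with constant $L_K(|\nu_t|)$ uniform in $t$ by \ref{ass:lipK} and \eqref{e-Kequilip}, and has sublinear growth thanks to \ref{ass:sublinK}. It is therefore a Carath\'eodory field in the sense of Section \ref{sec:preliminaries}, so the ODE
\[
d\overline{X}^{i,N}_t=\scalarp{K,\nu_t\times\sigma_t}(\overline{X}^{i,N}_t)\,dt,\qquad \overline{X}^{i,N}_0=X^{i,N}_0,
\]
admits a unique Lipschitz solution on $[0,T]$ for every realization of the initial datum, by the standard existence theorem recalled in Lemma \ref{le:equivPDEODEtransp} (see also \cite{filipov}).

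For the label component, once $(\nu_t,\sigma_t)$ is fixed, the transition rates $\alpha_F(\nu_t,\sigma_t)$ and $\alpha_L(\nu_t,\sigma_t)$ become deterministic, bounded (by \ref{ass:supalpha}) and measurable functions of $t$, because $t\mapsto\nu_t$ is weakly continuous, $t\mapsto\sigma_t$ is absolutely continuous, and $\alpha_F,\alpha_L$ satisfy \eqref{e-w1lip}. Hence $\overline{\state}^{i,N}_t$ is a time-inhomogeneous continuous-time Markov chain on the two-state space $\{F,L\}$ with bounded measurable rates, whose existence and pathwise construction (e.g.\ via exponential waiting times with time-dependent intensity, or as a limit of the Euler-type scheme exhibited for $\state^{i,N}_t$) is standard.

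It remains to verify that the resulting processes indeed satisfy $\Law(\overline{X}^{i,N}_t)=\nu_t$ and $\Law(\overline{\state}^{i,N}_t)=\sigma_t$, which is the only nontrivial requirement in their definition. But by assumption on the sample we have $X^{i,N}_0\sim\overline{\nu}=\nu_0$ and $\state^{i,N}_0\sim\overline{\sigma}=\sigma_0$; thus the pair $(\overline{X}^{i,N}_t,\overline{\state}^{i,N}_t)$ fits precisely into the framework of Proposition \ref{prop:equivPDEODEoverline}, and the conclusion of that proposition gives the required identification of the marginal laws. No part of the argument is really delicate: the only point to keep in mind is that, unlike in the actual particle system, here the rates and the driving field are \emph{deterministic} functions of $t$, so that no self-consistency is needed and the construction is purely classical.
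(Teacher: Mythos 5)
Your proof is correct and follows essentially the same route as the paper: invoke Corollary \ref{cor:nusigmaexistence} for the existence of $(\nu_t,\sigma_t)$ and then Proposition \ref{prop:equivPDEODEoverline} to identify the marginal laws of the constructed processes. The paper states this in one line; you have simply filled in the supporting details (Carath\'eodory well-posedness of the linear ODE, classical existence of the inhomogeneous jump process with deterministic bounded rates), which is a faithful unpacking rather than a different argument.
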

\begin{proof}
	Follows from Proposition \ref{prop:equivPDEODEoverline} and the existence of $(\nu_t,\sigma_t)$ from Corollary \ref{cor:nusigmaexistence}.
\end{proof}

For every fixed $t$, the processes $\overline{X}^{i,N}_t$ with $i=1,\dots, N$ are clearly independent of each other, and so are the processes $\overline{\state}^{i,N}_t$ .

Now, all the above constructions still leave one free to choose how to couple the processes $\state^{i, N}_t$ and $\overline{\state}^{i,N}_t$ in their product space\footnote{The coupling between $X^{i,N}_t$ and $\overline{X}^{i,N}_t$ is as usual tacitly defined by asking that $X^{i,N}_t-\overline{X}^{i,N}_t$ solves the SDE obtained as difference of the ones for $X^{i,N}_t$ and $\overline{X}^{i,N}_t$, respectively.} : we namely assume that
\[
\Law\left(\state^{i, N}_t, \overline{\state}^{i,N}_t\right)\in \Gamma_o(\Law(\state^{i, N}_t), \sigma_t)\,.
\]
Here optimality of the transportation plans is meant with respect of the distance $\W_1$ on $\mathcal{P}_1(\{F, L\})$, where (as everywhere in what follows) the set $\{F, L\}$ is endowed with the distance \eqref{eq:distfinite}. With the above choice, by the definition of $\W_1$ we have
\begin{align}\label{eq:optimalcoupling}
\mathbb{E}|\state^{i, N}_t-\overline{\state}^{i,N}_t|=\W_1(\Law(\state^{i, N}_t), \sigma_t)
\end{align}
for all $i$, $N$, and $t$. Since $\state^{i, N}_t$ and $\overline{\state}^{i,N}_t$ are random variables on the discrete space $\{F, L\}$, a simple computation using \eqref{eq: Kantorovich} together with \eqref{eq:optimalcoupling} entails that 
\begin{align}\label{eq:optimalcoupling2}
\mathbb{E}|\state^{i, N}_t-\overline{\state}^{i,N}_t|=\left|\mathbb{P}\{\state^{i, N}_t=F\}-\mathbb{P}\{\overline\state^{i, N}_t=F\}\right|\,.
\end{align}

\begin{remark}
	The relationship between the empirical mean of the independent processes $(\overline{X}^{i,N}_t,\overline{\state}^{i,N}_t)_{i = 1,\ldots,N}$ given by
	\begin{equation}\label{eq:averaged-empirical}
	\overline{\nu}^N_t \defin \frac1N \sum^N_{i = 1}\delta_{\overline{X}^{1,N}_t} \quad \text{ and } \quad \overline{\sigma}^N_t \defin \frac1N \sum^N_{i = 1}\delta_{\overline{\state}^{i,N}_t}
	\end{equation}
	and $(\nu_t,\sigma_t)$ solution of system \eqref{eq:macronusigma} is clear: by the Glivenko-Cantelli's theorem, $(\overline{\nu}^N_t,\overline{\sigma}^N_t)$ converges weakly to $(\nu_t,\sigma_t)$ as $N\rightarrow+\infty$. The rate of convergence can actually be quantified thanks to \cite[Theorem 1]{fournier} (which holds for $\nu_t$ and $\sigma_t$, since their support is uniformly compact in time): we may apply it once for $\sigma_t$ and the values $p = d = 1, q = 3$ to get for every $t \in [0,T]$
	\begin{align}\begin{split}\label{eq:wassfourniersigma}
	\mathbb{E}\left[\W_1(\overline{\sigma}^N_t,\sigma_t)\right] &\leq K_1 (N^{-1/2} + N^{-2/3}),
	\end{split}
	\end{align}
	for a given constant $K_1>0$. If we apply it for $\nu_t$ and the values $p = 2d, q = 3p$ (where $d$ here denotes the dimension of the state space $\R^d$) we get for every $t \in [0,T]$
	\begin{align*}\begin{split}
	\mathbb{E}\left[\W_{2d}(\overline{\nu}^N_t,\nu_t)^{2d}\right] &\leq K_2 (N^{-1/2} + N^{-2/3}),
	\end{split}
	\end{align*}
	for some $K_2>0$. Since it is well-known that $\W_1 \leq \W_p$ for any $p \geq 1$, an application of Jensen's inequality yields the estimate
	\begin{align}\begin{split}\label{eq:wassfourniernu}
	\mathbb{E}\left[\W_{1}(\overline{\nu}^N_t,\nu_t)\right] &\leq K_2^{1/2d} (N^{-1/4d} + N^{-1/3d}).
	\end{split}
	\end{align}
	Setting
	\begin{align*}
	\Theta(N) \defin K_1 (N^{-1/2} + N^{-2/3})+K_2^{1/2d} (N^{-1/4d} + N^{-1/3d}),
	\end{align*}
	and putting together \eqref{eq:wassfourniernu} and \eqref{eq:wassfourniersigma} we obtain
	\begin{align}\label{eq:wassfourniertotal}
	\mathbb{E}\left[\W_{1}(\overline{\nu}^N_t,\nu_t)\right]+\mathbb{E}\left[\W_1(\overline{\sigma}^N_t,\sigma_t)\right] &\leq \Theta(N).
	\end{align}
\end{remark}

\begin{remark}[Exchangeability of processes]\label{rem:exchange}
    Notice a fundamental property of the processes $(\overline{X}^{i,N}_t,\overline{\state}^{i,N}_t)_{i = 1,\ldots,N}$: for every $i,j=1,\ldots,N$ and every $t\geq0$ we have
    $$\mathbb{E}\left|X^{i,N}_t - \overline{X}^{i,N}_t\right| = \mathbb{E}\left|X^{j,N}_t - \overline{X}^{j,N}_t\right| \quad\text{ and }\quad \mathbb{E}\left|\state^{i,N}_t - \overline{\state}^{i,N}_t\right| = \mathbb{E}\left|\state^{j,N}_t - \overline{\state}^{j,N}_t\right|.$$
    After noticing that both identities hold trivially at $t=0$, this clearly follows from the simmetry of the processes $(X^{i,N}_t, \state^{i,N}_t)$ and the fact that       $(\overline{X}^{i,N}_t,\overline{\state}^{i,N}_t)_{i = 1,\ldots,N}$ are independent. In particular, this exchangeability implies that
    $$\frac1N\sum^N_{j=1} \mathbb{E}\left|X^{j,N}_t - \overline{X}^{j,N}_t\right| = \mathbb{E}\left|X^{i,N}_t - \overline{X}^{i,N}_t\right|,$$
    as well as
    $$\frac1N\sum^N_{j=1} \mathbb{E}\left|\state^{j,N}_t - \overline{\state}^{j,N}_t\right| = \mathbb{E}\left|\state^{i,N}_t - \overline{\state}^{i,N}_t\right|\,.$$
\end{remark}

\subsection{The mean-field limit}

The main goal of this section is to show that, for $N$ large, the random empirical distributions $\nu^N_t$ and $\sigma^N_t$ associated to the processes $(X^{1,N}_t,\state^{1,N}_t),\ldots,(X^{N,N}_t,\state^{N,N}_t)$ defined in the previous subsection are close, in a probabilistic sense, to the deterministic measures $\nu_t$ and $\sigma_t$, solutions of system \eqref{eq:macronusigma} with initial datum $(\overline{\nu},\overline{\sigma})$.
The result we aim to prove is namely the following.

\begin{theorem}\label{th:main}
	Under Assumptions \ref{ass:lipK} and \ref{ass:lipalpha}, there exists a  function $\Psi:\N\rightarrow\R_+$ satisfying $\lim_{N\to +\infty}\Psi(N)=0$ such that
	\begin{equation*}
	\sup_{t\geq 0}\mathbb{P}\left(\W_1(\nu^N_t,\nu_t) + \W_1(\sigma^N_t,\sigma_t) > \varepsilon\right) \leq \varepsilon^{-1}\Psi(N).
	\end{equation*}
\end{theorem}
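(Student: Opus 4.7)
The plan is to split the target distance into a propagation-of-chaos piece comparing the true particle system with the independent averaged particles, and a Glivenko--Cantelli piece comparing the averaged empirical measures with $(\nu_t,\sigma_t)$. Since the particle processes and the limit system are constructed on the fixed interval $[0,T]$ used throughout Remark \ref{rem:boundedsupp}, I interpret the supremum in the statement as $\sup_{t\in[0,T]}$ and let the function $\Psi$ absorb the $T$-dependence coming from Gronwall: concretely $\Psi(N)=C(T)\,\Theta(N)$, with $\Theta(N)\to 0$ as $N\to\infty$ as in \eqref{eq:wassfourniertotal}.

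First I would apply the triangle inequality
\[
\W_1(\nu^N_t,\nu_t)+\W_1(\sigma^N_t,\sigma_t)\le \W_1(\nu^N_t,\overline{\nu}^N_t)+\W_1(\sigma^N_t,\overline{\sigma}^N_t)+\W_1(\overline{\nu}^N_t,\nu_t)+\W_1(\overline{\sigma}^N_t,\sigma_t),
\]
where the last two terms are controlled in expectation by $\Theta(N)$ via \eqref{eq:wassfourniertotal}. For the first two, the canonical transport plans pairing $X^{i,N}_t$ with $\overline{X}^{i,N}_t$ and $\state^{i,N}_t$ with $\overline{\state}^{i,N}_t$, together with the exchangeability in Remark \ref{rem:exchange}, yield
\[
\mathbb{E}\,\W_1(\nu^N_t,\overline{\nu}^N_t)\le \mathbb{E}|X^{1,N}_t-\overline{X}^{1,N}_t|,\qquad \mathbb{E}\,\W_1(\sigma^N_t,\overline{\sigma}^N_t)\le \mathbb{E}|\state^{1,N}_t-\overline{\state}^{1,N}_t|,
\]
so it suffices to bound $u_N(t):=\mathbb{E}|X^{1,N}_t-\overline{X}^{1,N}_t|+\mathbb{E}|\state^{1,N}_t-\overline{\state}^{1,N}_t|$.

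For the transport component, subtracting the ODEs \eqref{stoc-X} for $X^{1,N}$ and $\overline{X}^{1,N}$, adding and subtracting $\langle K,\nu_t\times\sigma_t\rangle(X^{1,N}_t)$, and exploiting the Lipschitz character of $\langle K,\nu_t\times\sigma_t\rangle$ (from \ref{ass:lipK} and the compact-support bound of Remark \ref{rem:boundedsupp}) together with
\[
\bigl|\langle K,\nu^N_t\times\sigma^N_t\rangle(x)-\langle K,\nu_t\times\sigma_t\rangle(x)\bigr|\le C_T\bigl(\W_1(\nu^N_t,\nu_t)+\W_1(\sigma^N_t,\sigma_t)\bigr),
\]
a direct consequence of Kantorovich--Rubinstein duality and of the identity $\W_1(\sigma^N_t,\sigma_t)=|\sigma^N_t(F)-\sigma_t(F)|$, gives a linear differential inequality for $\mathbb{E}|X^{1,N}_t-\overline{X}^{1,N}_t|$ driven by the r.h.s. of the triangle splitting. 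For the jump component, by \eqref{eq:optimalcoupling2} one has $\mathbb{E}|\state^{1,N}_t-\overline{\state}^{1,N}_t|=|\mathbb{P}(\state^{1,N}_t=F)-\mathbb{P}(\overline{\state}^{1,N}_t=F)|$; subtracting the ODEs \eqref{stoc-Y} for the two laws and using the Lipschitz estimate \eqref{e-w1lip} for the entries of $A$ produces an analogous differential inequality, with constants controlled through \ref{ass:lipalpha}.

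Closing the loop, and using that $u_N(t)$ itself controls $\mathbb{E}\W_1(\nu^N_t,\overline{\nu}^N_t)+\mathbb{E}\W_1(\sigma^N_t,\overline{\sigma}^N_t)$ while the averaged-vs-limit piece is dominated by $\Theta(N)$, one arrives at an integral inequality
\[
u_N(t)\le \int_0^t\bigl(C_1 u_N(s)+C_2 \Theta(N)\bigr)\,ds,
\]
whence $u_N(t)\le C(T)\Theta(N)$ on $[0,T]$ by Gronwall. Plugging back into the triangle decomposition and applying Markov's inequality yields the desired bound with $\Psi(N)=(C(T)+1)\Theta(N)$. The main obstacle I anticipate lies in the jump step: one has to justify that the Lipschitz dependence \eqref{e-w1lip} of $A_{\nu,\sigma}$ on its arguments transfers through the averaging in \eqref{stoc-Y}, so that the correlation between $\state^{i,N}_t$ and the empirical measures $(\nu^N_t,\sigma^N_t)$ does not destroy the linearity of the inequality in $u_N$ and in $\W_1(\nu^N_t,\nu_t)+\W_1(\sigma^N_t,\sigma_t)$; the secondary subtlety, as highlighted in the feedback, is the exponential dependence of the Gronwall constant on $T$, which I handle by interpreting the supremum on the time interval on which the processes are actually defined and letting $\Psi$ depend on $T$.
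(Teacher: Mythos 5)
Your proposal is correct and follows essentially the same route as the paper: the triangle decomposition into a propagation-of-chaos piece and a Fournier--Guillin piece bounded by $\Theta(N)$, the reduction to $\mathbb{E}|X^{i,N}_t-\overline{X}^{i,N}_t|+\mathbb{E}|\state^{i,N}_t-\overline{\state}^{i,N}_t|$ via atomic transport plans and exchangeability, the Gronwall closure, and the final Markov inequality are exactly the paper's argument (packaged there as Lemma \ref{le:propchaos}). The only cosmetic difference is which intermediate comparison term you add and subtract in the transport estimate, and your handling of the jump step (passing the Lipschitz bound \eqref{e-w1lip} through the expectation in \eqref{stoc-Y}, using \eqref{eq:optimalcoupling2} and the uniform bound on $\alpha$) is precisely what the paper does.
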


For the proof of Theorem we will need a key intermediate result that we state below.
\begin{lemma}[Uniform propagation of chaos]\label{le:propchaos}
        Define the empirical  measures $\nu^N_t$, $\sigma^N_t$, $\overline{\nu}^N_t$, and $\overline{\sigma}^N_t$ through \eqref{eq:empmean}, and \eqref{eq:averaged-empirical}, respectively.
	Under Assumptions \ref{ass:lipK} and \ref{ass:lipalpha}, there exists a  function $\Phi:\N\rightarrow\R_+$ satisfying $\lim_{N\to +\infty}\Phi(N)=0$ such that
	\begin{equation}\label{eq:propchaos}
	\sup_{t \geq 0} \mathbb{E}\bigg[\left| X^N_i(t) - \overline{X}^N_i(t)\right| + \left| \state^N_i(t) - \overline{\state}^N_i(t)\right| \bigg] \leq \Phi(N).
	\end{equation}
\end{lemma}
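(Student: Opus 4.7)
The plan is to derive a linear Grönwall inequality for the quantity
$$e_N(t) := \mathbb{E}|X^{i,N}_t - \overline{X}^{i,N}_t| + \mathbb{E}|\state^{i,N}_t - \overline{\state}^{i,N}_t|,$$
which is $i$-independent by exchangeability (Remark \ref{rem:exchange}), with the Glivenko--Cantelli rate $\Theta(N)$ of \eqref{eq:wassfourniertotal} as inhomogeneity. Since the particle system is defined on the fixed horizon $[0,T]$, I read the statement as $\sup_{t\in[0,T]}\mathbb{E}[\,\cdot\,]\le \Phi(N)$, with $\Phi$ depending on $T$: a genuinely $T$-free bound would demand a contraction/dissipativity hypothesis on the transition matrix not present in Assumptions \ref{ass:lipK}--\ref{ass:lipalpha}, so this appears to be the intended interpretation.

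\emph{Step 1 (positions).} Writing
\begin{align*}
X^{i,N}_t - \overline{X}^{i,N}_t &= \int_0^t \bigl[v^N_s(X^{i,N}_s) - v^N_s(\overline{X}^{i,N}_s)\bigr]\,ds \\
&\quad + \int_0^t \bigl[v^N_s(\overline{X}^{i,N}_s) - v_s(\overline{X}^{i,N}_s)\bigr]\,ds,
\end{align*}
with $v^N_s = \langle K, \nu^N_s\times\sigma^N_s\rangle$ and $v_s = \langle K, \nu_s\times\sigma_s\rangle$, Assumption \ref{ass:lipK} combined with the uniform support bound \eqref{eq:a-priori} implies that $v^N_s$ has a Lipschitz constant $L$ independent of $s$ and $N$, so the first integral is bounded by $L\int_0^t e_N(s)\,ds$. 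For the second, expanding via \eqref{e-vf2} and applying Kantorovich--Rubinstein duality to $K^i \conv \nu^N_s$ versus $K^i \conv \nu_s$, together with uniform compactness of supports, yields the pointwise estimate
$$|v^N_s(x) - v_s(x)| \leq C\bigl(\W_1(\nu^N_s, \nu_s) + |\sigma^N_s(F) - \sigma_s(F)|\bigr).$$

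\emph{Step 2 (coupling to the limit).} I split $\mathbb{E}\W_1(\nu^N_s, \nu_s) \le \mathbb{E}\W_1(\nu^N_s, \overline{\nu}^N_s) + \mathbb{E}\W_1(\overline{\nu}^N_s, \nu_s)$. The first term is controlled by the canonical coupling $\frac{1}{N}\sum_j \delta_{(X^{j,N}_s, \overline{X}^{j,N}_s)}$ of empirical measures and exchangeability, giving $\mathbb{E}|X^{i,N}_s - \overline{X}^{i,N}_s|$; the second is $\le \Theta(N)$ by \eqref{eq:wassfourniertotal}. The same decomposition applied to $\sigma^N_s$, using that $\W_1$ on $\{F,L\}$ equals $|\cdot(F)-\cdot(F)|$, yields $\mathbb{E}|\sigma^N_s(F) - \sigma_s(F)| \le \mathbb{E}|\state^{i,N}_s - \overline{\state}^{i,N}_s| + \Theta(N)$.

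\emph{Step 3 (labels).} By the optimal coupling \eqref{eq:optimalcoupling2}, $\mathbb{E}|\state^{i,N}_t - \overline{\state}^{i,N}_t| = |\Delta_t|$ with $\Delta_t := \mathbb{P}\{\state^{i,N}_t = F\} - \sigma_t(F)$. Subtracting the $F$-component of $\dot\sigma_t = \appstate{\nu_t}{\sigma_t}\sigma_t$ from the scalar ODE extracted from \eqref{stoc-Y}, adding and subtracting $\alpha_F(\nu_t,\sigma_t)\mathbb{P}\{\state^{i,N}_t = F\}$ and $\alpha_L(\nu_t,\sigma_t)\mathbb{P}\{\state^{i,N}_t = L\}$, and using \eqref{e-w1lip} with Jensen's inequality to bound
$$|\mathbb{E}\alpha_i(\nu^N_t,\sigma^N_t) - \alpha_i(\nu_t,\sigma_t)| \le L_\alpha\bigl(\mathbb{E}\W_1(\nu^N_t,\nu_t) + \mathbb{E}\W_1(\sigma^N_t,\sigma_t)\bigr),$$
Step 2 converts the right-hand side into $L_\alpha\bigl(e_N(t) + \Theta(N)\bigr)$. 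This yields an integral inequality for $|\Delta_t|$ of the same form as the one for positions.

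\emph{Step 4 (Grönwall).} Summing the two estimates produces
$$e_N(t) \le C_1 \int_0^t e_N(s)\,ds + C_2\, T\, \Theta(N),$$
and Grönwall's lemma gives $e_N(t) \le C_2 T\, \Theta(N)\, e^{C_1 T}$ on $[0,T]$. Setting $\Phi(N) := C_2 T\, \Theta(N)\, e^{C_1 T}$ concludes the argument, since $\Theta(N)\to 0$.

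The main technical care is required in Step 3: $\alpha_i$ is evaluated at a random measure inside an expectation and multiplied by another probability, so one must pull the expectation into the Lipschitz estimate \eqref{e-w1lip} and recognize that, after the add-and-subtract trick, $|\Delta_t|$ is the only label-error quantity appearing linearly on the right-hand side that is not already absorbed by Step 2. The remaining steps are standard Glivenko--Cantelli-type coupling arguments.
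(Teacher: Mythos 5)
Your proof is correct and follows essentially the same route as the paper: decompose the position error into a spatial-Lipschitz term, an empirical-coupling term controlled via exchangeability, and a Glivenko--Cantelli term $\Theta(N)$ from \eqref{eq:wassfourniertotal}; treat the labels through \eqref{eq:optimalcoupling2}, the ODEs for the laws and the Lipschitz bound \eqref{e-w1lip}; and close with Gr\"onwall on $[0,T]$. Your reading of the supremum as being over $[0,T]$ with a $T$-dependent constant also matches what the paper's own proof actually delivers, namely a bound of the form $C\,T\,e^{CT}\,\Theta(N)$.
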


The proof of this result is postponed to the next section. Let us first show how Theorem \ref{th:main} can be easily derived, once Lemma \ref{le:propchaos} is established.

\begin{proof}[Proof of Theorem \ref{th:main}]
	By the triangular inequality it follows that
	\begin{align}
	\begin{split}\label{eq:wasstriangle}
	\W_1(\nu^N_t,\nu_t) & \leq \W_1(\nu^N_t,\overline{\nu}^N_t) + \W_1(\overline{\nu}^N_t,\nu_t),\\
	\W_1(\sigma^N_t,\sigma_t) & \leq \W_1(\sigma^N_t,\overline{\sigma}^N_t) + \W_1(\overline{\sigma}^N_t,\sigma_t).\\
	\end{split}
	\end{align}
	Since $\nu^N_t,\overline{\nu}^N_t,\sigma^N_t$ and $\overline{\sigma}^N_t$ are all atomic measures, by the properties of the Wasserstein distance we have
	\begin{align}\label{eq:wassersteintomean}
	\W_1(\nu^N_t,\overline{\nu}^N_t) \leq \frac1N\sum^N_{i = 1}\left|X^{i,N}_t - \overline{X}^{i,N}_t\right| \quad \text{ and } \quad \W_1(\sigma^N_t,\overline{\sigma}^N_t) \leq \frac1N\sum^N_{i = 1}\left|\state^{i,N}_t - \overline{\state}^{i,N}_t\right|.
	\end{align}

	Therefore, by taking expectations in \eqref{eq:wasstriangle} and plugging \eqref{eq:wassfourniertotal} and \eqref{eq:propchaos} on the the right-hand side, we obtain
	\begin{align*}
	\mathbb{E}\left[\W_1(\nu^N_t,\nu_t) + \W_1(\sigma^N_t,\sigma_t)\right]\leq \Phi(N) + \Theta(N).
	\end{align*}
	If we set
	$$\Psi(N)\defin \Phi(N) + \Theta(N),$$
	an application of Markov's inequality concludes the proof.
\end{proof}

\subsection{Proof of Lemma \ref{le:propchaos}}
First, we start from the term $\mathbb{E}|X^{i,N}_t - \overline{X}^{i,N}_t|$. By integrating the dynamics of $X^{i,N}_t$ from $0$ to $t$ we obtain
$$X^{i,N}_t = X^{i,N}_0 + \int^t_0 \langle K, \nu^N_s\times\sigma^N_s\rangle (X^{i,N}_s)\,\mathrm{d}s,$$
and similarly for $\overline{X}^{i,N}_t$ we get
$$\overline{X}^{i,N}_t = X^{i,N}_0 + \int^t_0 \langle K, \nu_s\times\sigma_s\rangle (\overline{X}^{i,N}_s)\,\mathrm{d}s.$$
Above we used that, by definition, $\overline{X}^{i,N}_0 = X^{i,N}_0$. Therefore, adding and subtracting the terms $\langle K, \overline{\nu}^N_s\times\overline{\sigma}^N_s\rangle (X^{i,N}_s)$ and $\langle K, \overline{\nu}^N_s\times\overline{\sigma}^N_s\rangle (\overline{X}^{i,N}_s)$, we get the estimate
\begin{align}\label{eq:meanfieldnu0}
    \mathbb{E}|X^{i,N}_t &- \overline{X}^{i,N}_t| = \nonumber\\
    & = \mathbb{E}\Bigg|X^{i,N}_0 + \int^t_0 \langle K, \nu^N_s\times\sigma^N_s\rangle (X^{i,N}_t)\,\mathrm{d}s- X^{i,N}_0 - \int^t_0 \langle K, \nu_s\times\sigma_s\rangle (\overline{X}^{i,N}_s)\,\mathrm{d}s\Bigg|\nonumber\\
    &\leq \underbrace{\mathbb{E}|X^{i,N}_0 - X^{i,N}_0|}_{=0} + \underbrace{\mathbb{E}\left[\int^t_0 |\langle K, \nu^N_s\times\sigma^N_s\rangle (X^{i,N}_s) - \langle K, \overline{\nu}^N_s\times\overline{\sigma}^N_s\rangle (X^{i,N}_s)|\,\mathrm{d}s\right]}_{I_1}\nonumber\\
    &\quad\quad +\underbrace{\mathbb{E}\left[\int^t_0 \left|\langle K, \overline{\nu}^N_s\times\overline{\sigma}^N_s\rangle (X^{i,N}_s) - \langle K, \overline{\nu}^N_s\times\overline{\sigma}^N_s\rangle (\overline{X}^{i,N}_s)\right|\,\mathrm{d}s\right]}_{I_2}\nonumber\\
    &\quad\quad + \underbrace{\mathbb{E}\left[\int^t_0 \left|\langle K, \overline{\nu}^N_s\times\overline{\sigma}^N_s\rangle (\overline{X}^{i,N}_s) - \langle K, \nu_s\times\sigma_s\rangle (\overline{X}^{i,N}_s)\right|\,\mathrm{d}s\right]}_{I_3}.
\end{align}
We shall now estimate from above the terms $I_1,I_2$ and $I_3$. Recall that  \eqref{eq:a-priori} and property (3) in Definition \ref{def:solnusigma} hold. This latter also gives that
\begin{equation}\label{eq:a-priori2}
|\overline{X}^{i, N}_t|\le R_T, \mbox{ so that  supp}(\overline{\nu}^N_t) \subset B(0, R_T)\,,
\end{equation}
with probability $1$. With this, \eqref{e-vf2}, and \cite[Lemma A.7]{MFOC},  for $I_1$ we have
\begin{align}\begin{split}\label{eq:meanfieldnu1}
    I_1 &\leq \mathbb{E}\left[\int^t_0 L_K(\W_1(\nu^N_s,\overline{\nu}^N_s) + \W_1(\sigma^N_s,\overline{\sigma}^N_s))\,\mathrm{d}s\right]\\
    & = \int^t_0 L_K(\mathbb{E}\left[\W_1(\nu^N_s,\overline{\nu}^N_s)\right] + \mathbb{E}\left[\W_1(\sigma^N_s,\overline{\sigma}^N_s)\right])\,\mathrm{d}s\\
    &\leq L_K\int^t_0\left(\frac1N\sum^N_{j=1}\mathbb{E}|X^{j,N}_s - \overline{X}^{j,N}_s| + \frac1N\sum^N_{j=1}\mathbb{E}|\state^{j,N}_s - \overline{\state}^{j,N}_s|\right)\,\mathrm{d}s\\
    &=L_K\int^t_0(\mathbb{E}|X^{i,N}_s - \overline{X}^{i,N}_s| + \mathbb{E}|\state^{i,N}_s - \overline{\state}^{i,N}_s|)\,\mathrm{d}s,
\end{split}
\end{align}
where we additionally used  the inequalities \eqref{eq:wassersteintomean} and Remark \ref{rem:exchange}. With \eqref{e-vf2} and the same argument in \eqref{e-Kmunu} we deduce
\begin{align}\begin{split}\label{eq:meanfieldnu2}
    I_2 &\leq L_K\int^t_0 \mathbb{E}|X^{i,N}_s - \overline{X}^{i,N}_s|\,\mathrm{d}s.
\end{split}
\end{align}
Finally, using \eqref{eq:a-priori2} within the same steps used for $I_1$, together with \eqref{eq:wassfourniertotal}, yields
\begin{align}\begin{split}\label{eq:meanfieldnu3}
    I_3 &\leq  \int^t_0 L_K(\mathbb{E}\left[\W_1(\overline{\nu}^N_s,\nu_s)\right] + \mathbb{E}\left[\W_1(\overline{\sigma}^N_s,\sigma_s)\right])\,\mathrm{d}s\\
    &\leq L_K \Theta(N) t.
\end{split}
\end{align}
By plugging \eqref{eq:meanfieldnu1}, \eqref{eq:meanfieldnu2} and \eqref{eq:meanfieldnu3} into \eqref{eq:meanfieldnu0} we finally obtain
\begin{align}\begin{split}\label{eq:meanfieldnu4}
    \mathbb{E}|X^{i,N}_t - \overline{X}^{i,N}_t| &\leq L_K \Theta(N) t \\
    & \quad\quad+ 2L_K\int^t_0(\mathbb{E}|X^{i,N}_s - \overline{X}^{i,N}_s| + \mathbb{E}|\state^{i,N}_s - \overline{\state}^{i,N}_s|)\,\mathrm{d}s.
\end{split}
\end{align}

We now turn to the term $\mathbb{E}|\state^{i,N}_t - \overline{\state}^{i,N}_t|$.
Using  \eqref{stoc-Y} and \eqref{eq:optimalcoupling2}, and since $\state^{i,N}_0=\overline{\state}^{i,N}_0$ we  have
\begin{align*}
\mathbb{E}|\state^{i,N}_t &- \overline{\state}^{i,N}_t| \le\\
    & \le \int_0^t \left|\frac{\mathrm{d}}{\mathrm{d}s}\mathbb{P}\{\state^{i, N}_s=F\}-\frac{\mathrm{d}}{\mathrm{d}s}\mathbb{P}\{\overline\state^{i, N}_s=F\}\right|\,\mathrm{d}s\\
    & \le \int_0^t \left|\mathbb{E}(\alpha_F(\nu_s, \sigma_s))\mathbb{P}\{\overline\state^{i, N}_s=F\}-\mathbb{E}(\alpha_F(\nu^N_s, \sigma^N_s))\mathbb{P}\{\state^{i, N}_s=F\}\right|\,\mathrm{d}s  \\
 & +
\int_0^t \left|\mathbb{E}(\alpha_L(\nu^N_s, \sigma^N_s))\mathbb{P}\{\state^{i, N}_s=L\}-\mathbb{E}(\alpha_L(\nu_s, \sigma_s))\mathbb{P}\{\overline{\state}^{i, N}_s=L\}\right|\,\mathrm{d}s \\
 & \le 
 \int_0^t\mathbb{E} \left|\alpha_F(\nu_s, \sigma_s)-\alpha_F(\nu^N_s, \sigma^N_s)\right|\,\mathrm{d}s +\int_0^t \mathbb{E} \left|\alpha_L(\nu_s, \sigma_s)-\alpha_L(\nu^N_s, \sigma^N_s)\right|\,\mathrm{d}s\\
 & +\int_0^t
\left(|\mathbb{E}(\alpha_F(\nu^N_s, \sigma^N_s))|+|\mathbb{E}(\alpha_L(\nu^N_s, \sigma^N_s))|\right)\,\left|\mathbb{P}\{\state^{i, N}_s=F\}-\mathbb{P}\{\overline\state^{i, N}_s=F\}\right|\mathrm{d}s\,,
\end{align*}
where we additionally exploited that clearly 
$$
\left|\mathbb{P}\{\state^{i, N}_s=F\}-\mathbb{P}\{\overline\state^{i, N}_s=F\}\right|=\left|\mathbb{P}\{\state^{i, N}_s=L\}-\mathbb{P}\{\overline\state^{i, N}_s=L\}\right|\,.
$$
By Assumption \ref{ass:supalpha} there exists a uniform constant $L'_\alpha > 0$ such that $|\mathbb{E}(\alpha_F(\nu^N_s, \sigma^N_s))|+|\mathbb{E}(\alpha_L(\nu^N_s, \sigma^N_s))|\le L'_\alpha$.  We also recall that, by \eqref{eq:a-priori} and property (3) in Definition \ref{def:solnusigma}, $\nu^N_s$ and $\nu_s$ have by construction support contained in a compact set $B(0, R_T) \subset \R^d$ independent of $N$ and $s$. We can therefore use \eqref{e-w1lip} and \eqref{eq:optimalcoupling2}, and continue the above estimate to get
\begin{align*}
\mathbb{E}|\state^{i,N}_t &- \overline{\state}^{i,N}_t| \le\\
    & \le 2L_\alpha\int_0^t \left(\mathbb{E}\left[\W_1(\nu^N_s,\nu_s)\right] + \mathbb{E}\left[\W_1(\sigma^N_s,\sigma_s)\right]\right)\,\mathrm{d}s+ L'_\alpha\int_0^t \mathbb{E}|\state^{i,N}_s - \overline{\state}^{i,N}_s|\,\mathrm{d}s\,.
\end{align*}
With \eqref{eq:wasstriangle} and \eqref{eq:wassersteintomean}, plugging \eqref{eq:wassfourniertotal}, and with Remark \ref{rem:exchange}, we then have
\begin{align*}
\mathbb{E}|\state^{i,N}_t &- \overline{\state}^{i,N}_t| \le\\
    & \le 2L_\alpha\Theta(N)t + 2L_\alpha\int_0^t \mathbb{E}|X^{i,N}_s - \overline{X}^{i,N}_s|\,\mathrm{d}s+(2L_\alpha+ L'_\alpha)\int_0^t \mathbb{E}|\state^{i,N}_s - \overline{\state}^{i,N}_s|\,\mathrm{d}s\\
    & \leq 2L_\alpha\Theta(N)t + (2L_\alpha+ L'_\alpha)\int_0^t \left(\mathbb{E}|X^{i,N}_s - \overline{X}^{i,N}_s| + \mathbb{E}|\state^{i,N}_s - \overline{\state}^{i,N}_s|\right)\,\mathrm{d}s \,.
\end{align*}

Summing the above estimate to \eqref{eq:meanfieldnu4}, we obtain the integral inequality
\begin{align*}
    \mathbb{E}|X^{i,N}_t - \overline{X}^{i,N}_t| &+ \mathbb{E}|\state^{i,N}_t - \overline{\state}^{i,N}_t| \leq (L_K+2L_\alpha) \Theta(N)t \\ 
    & \quad\quad+ (2L_K+2L_{\alpha}+L'_\alpha)\int^t_0\left(\mathbb{E}|X^{i,N}_s - \overline{X}^{i,N}_s| + \mathbb{E}|\state^{i,N}_s - \overline{\state}^{i,N}_s|\right)ds.
\end{align*}
Hence, an application of Gronwall's inequality to the function $\mathbb{E}|X^{i,N}_t - \overline{X}^{i,N}_t| + \mathbb{E}|\state^{i,N}_t - \overline{\state}^{i,N}_t|$ inside the interval $[0,T]$ yields
\begin{align*}
    \mathbb{E}|X^{i,N}_t - \overline{X}^{i,N}_t| + \mathbb{E}|\state^{i,N}_t - \overline{\state}^{i,N}_t| \leq (L_K+2L_\alpha) T \mathrm{e}^{(2L_K+2L_{\alpha}+L'_\alpha)T}\cdot \Theta(N)\,,
\end{align*}
which is the desired statement.

\section{Numerical experiments and applications}\label{sec-num}
We finally provide some practical applications of the present study, by numerically implementing some examples of social interaction dynamics. We will discuss the well-posedness of these examples according to theoretical assumptions. In particular, we remark that in all examples we account a bounded computational domain, therefore condition (H3) will be automatically satisfied.
Numerical simulations are performed with a first-order finite volume scheme. Details of the implementation are reported in \ref{sec:FVscheme}. 


\subsection{Test I: Consensus dynamics}
We aim to show the evolution of the mean-field system when the measures $\mu_t^F, \mu_t^L$  have a bounded confidence interaction kernel. Therefore, we consider the  Hegselmann-Krause type interactions
\begin{align}\label{HKs}
K^i(x) = a^i(x)x,\qquad a^i(x) = \chi_{\{|x|\leq C^i\}} (x),\qquad i\in\{F,L\}
\end{align}
where $C^F,C^L>0$ are the {\em confidence thresholds} respectively for the followers, and leaders. We remark that Assumptions \ref{ass:lipK} would require to replace the indicator functions $a_i$'s with  Lipschitz approximations thereof. When $a^F$, and $a^L$ are two bounded, Lipschitz continuous functions, a direct computation shows indeed that the functions $K^i(x)=a^i(x)x$
satisfy Assumptions \ref{ass:lipK}  inside $B(0,R_T)$, which is enough since our measures are compactly supported. 
On the other hand, the experiments are not affected by such a smoothing procedure, hence we keep the definition \eqref{HKs} throughout this section.

We want to solve numerically the evolution of the mean-field interaction dynamics, observing the impact of different choices of  birth rates functions $\alpha_F,\alpha_L$.  We select the computational domain $\Omega =[-1,1]$ and system \eqref{eq:macroleadfollstrong} complemented by zero-flux boundary conditions.

Let $\sigma_t(F)$ and $\sigma_t(L)$ be the total mass of followers and leaders at time $t$, respectively. Since the total mass is preserved, by renormalizing at the initial time, it holds $\sigma_t(L)+\sigma_t(F)=1$. We assume that at time $t=0$ the initial data is uniformly distributed in $\Omega$ with initial density $\sigma_0(L)=1-\sigma_t(F)=0.75$. 

We report in Table \ref{Tab1} the  model parameters for two different test cases. In both cases, we assume that leaders have larger range of influence than followers, with $C^F=0.2$ and $C^L=0.6$.
For the numerical discretization, we select $N=80$ space grid points, time step $\Delta t = 0.0127$ and final time $T =25$.
\begin{table}[t]
\centering
\caption{Computational parameters for Test I.}
\begin{tabular}{c|c|c|c|c|c|c|c|c|}
\hline
Test  & $C^F$ & $C^L$ &$\alpha_F$& $\alpha_L$&$\sigma_0(F)$ &$\sigma_0(L)$ & $\delta_F$& $\delta_L$ \\
\hline
\hline
 Ia & 0.2 & 0.6 & 0.1 & 0.95&0.75&0.25 & -- & --\\
 Ib & 0.2 & 0.6 & \eqref{T1b:alfaF}& \eqref{T1b:alfaL}&0.75&0.25 & 0.35 & 0.2\\
\hline
\end{tabular}
\label{Tab1}
\end{table}

\vspace{+0.25cm}
\noindent
{\em Test Ia: constant rates.} We have reported  in Figure \ref{fig1} the evolution of the mean-field system with different simulations, when constant transition rates $\alpha_F,\alpha_L$ are selected. According to Table \ref{Tab1}, we selected 
$\alpha_F=0.1, \alpha_L=0.95$.
Notice that in the case of constant rates the total masses $\sigma_t(F),\sigma_t(L)$ converge to
\[
\sigma_\infty(F) = \frac{\alpha_L}{\alpha_F+\alpha_L}, \qquad \sigma_\infty(L) = \frac{\alpha_F}{\alpha_F+\alpha_L}.
\]
Figure \ref{fig1} depicts the density $\nu_t(x)$ in the time interval $\Omega\times[0,T]$, and the time evolution of $\sigma_t(F)$ and $\sigma_t(L)$. In Figure \ref{fig2}  we report different time frame of the densities $\mu^F_t,\mu^L_t$. We observe that at final time the system has clustered around three states.
\begin{figure}
\centering
\includegraphics[scale=0.375]{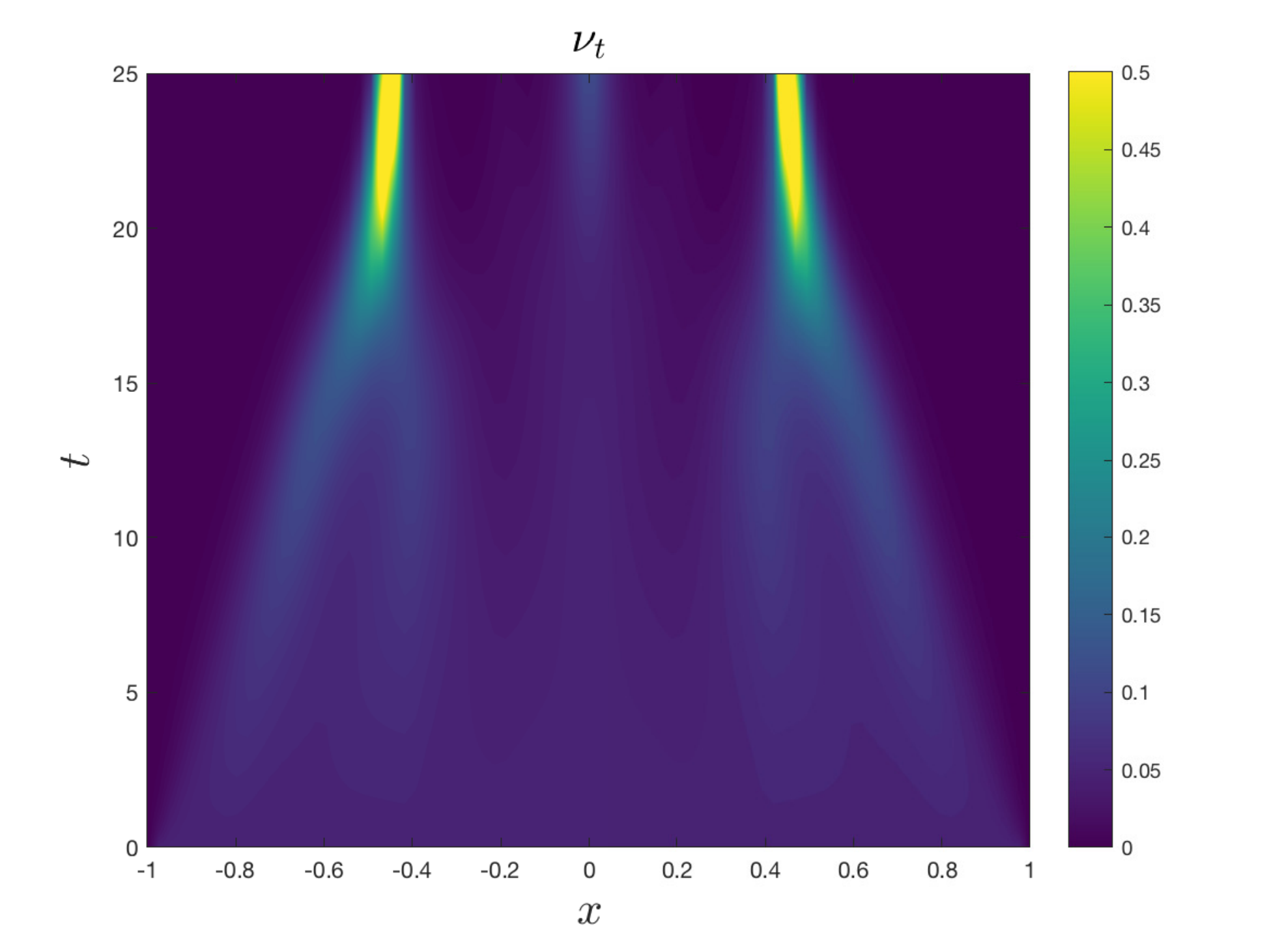}
\qquad
\includegraphics[scale=0.375]{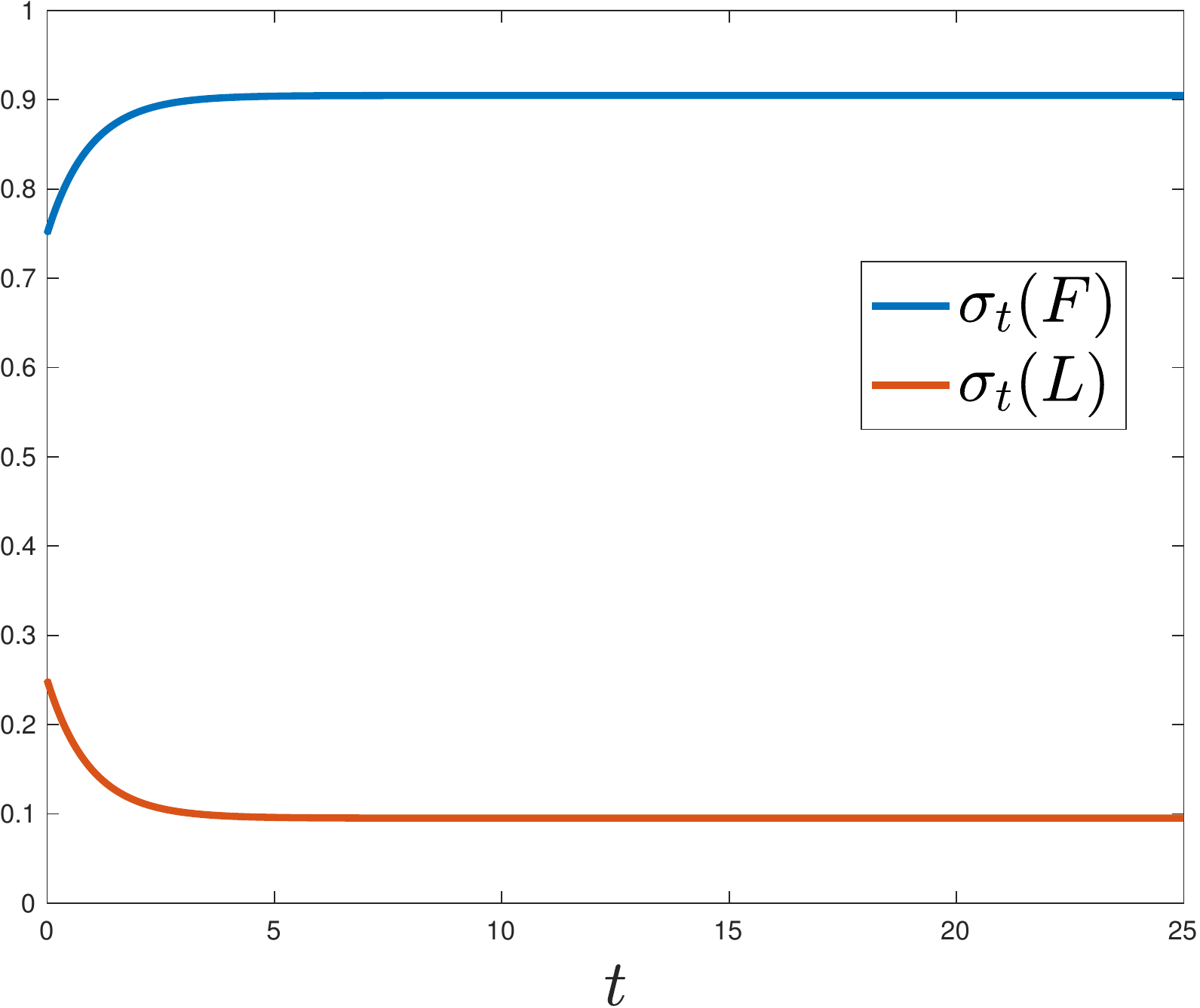}
\caption{{\em Test Ia}. 
Left: the total density $\nu_t$ in the space time domain $[-1,1]\times[0,T]$. Right: the followers' and leaders' mass, $\sigma_t(F),\sigma_t(L)$, with a monotonic evolution in time induced by the constant rates $\alpha_F=0.15$ and $\alpha_L=0.95$.}\label{fig1}
\end{figure}

\begin{figure}
\centering
\includegraphics[scale=0.275]{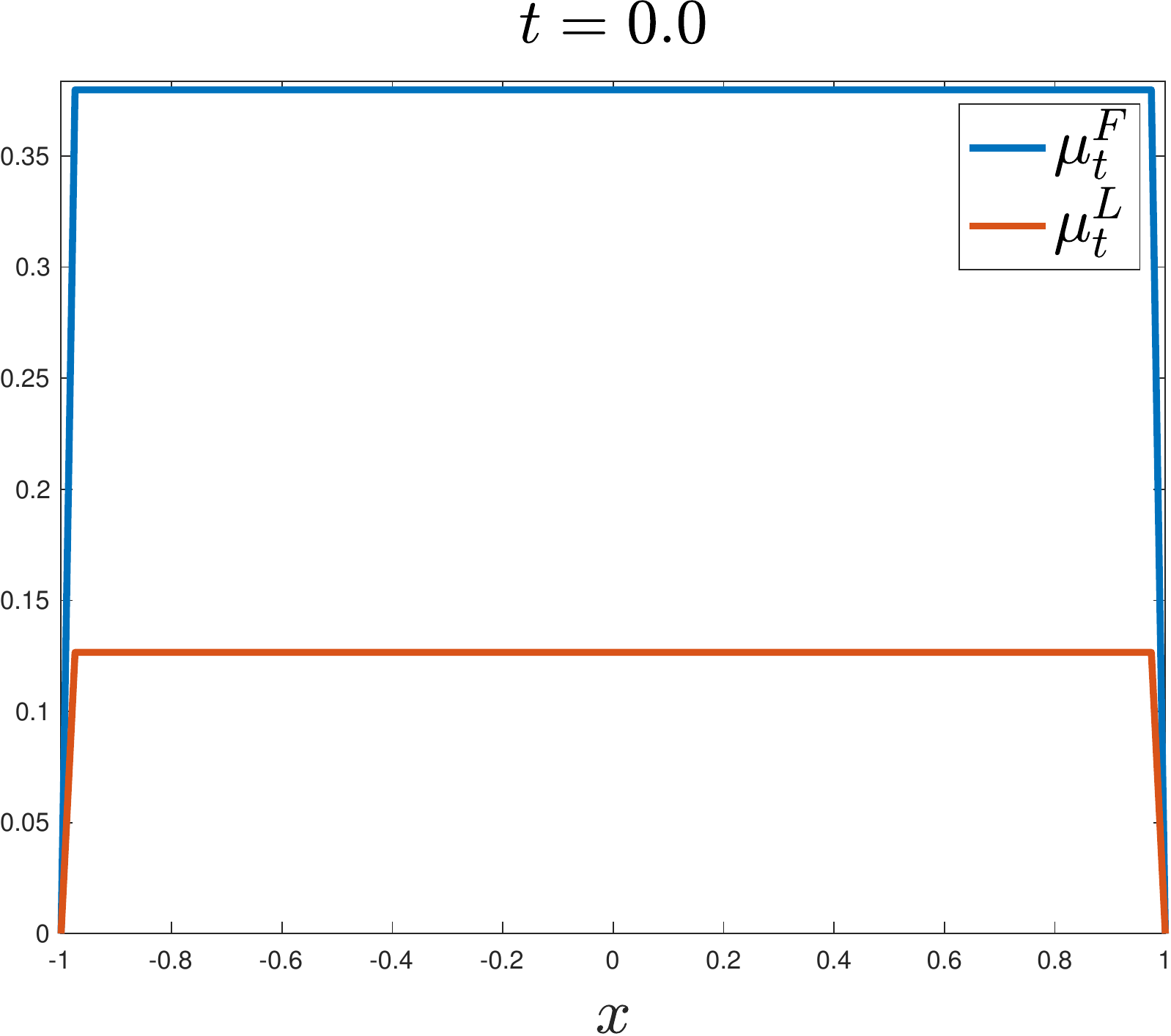}
\includegraphics[scale=0.275]{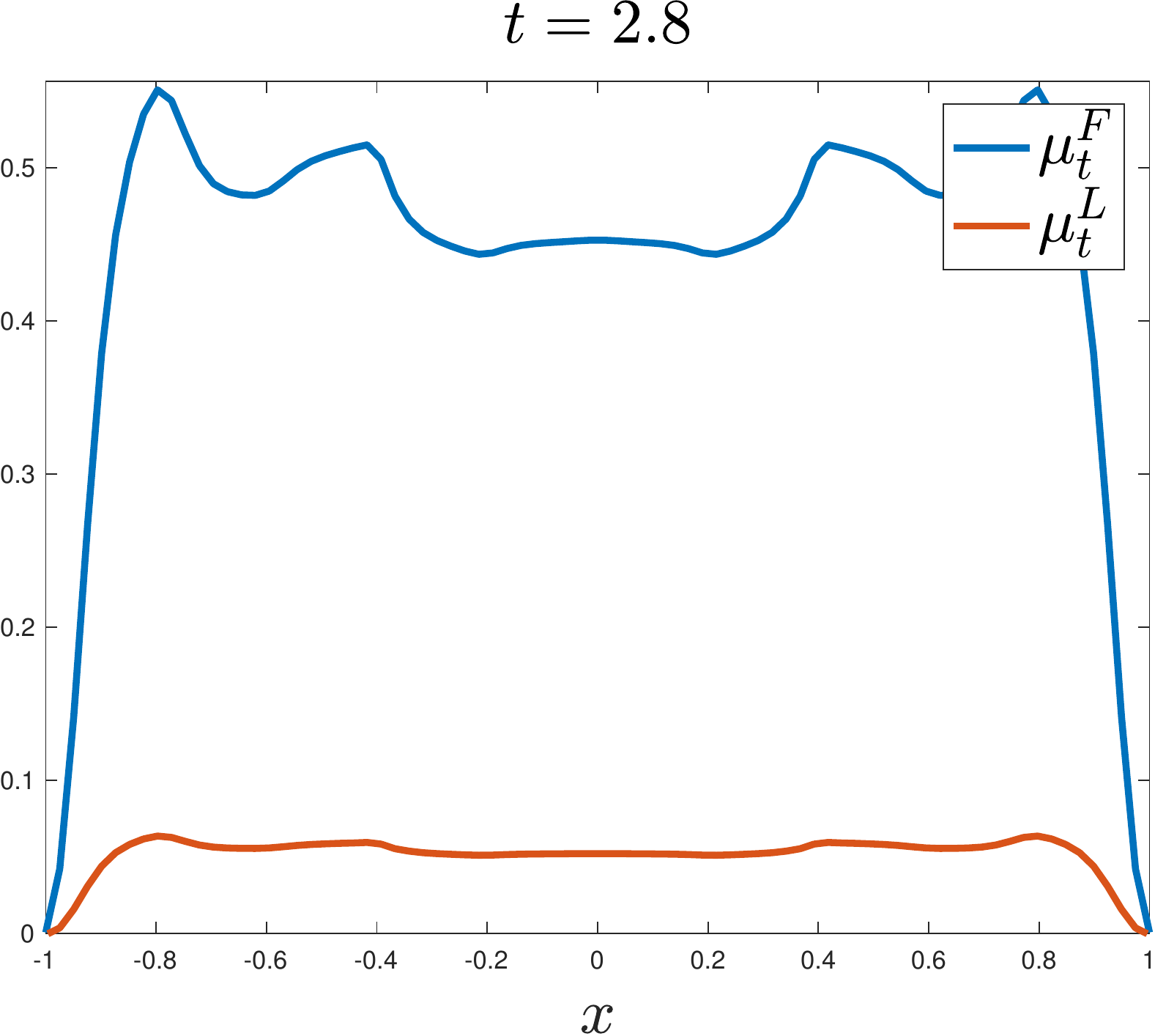}
\includegraphics[scale=0.275]{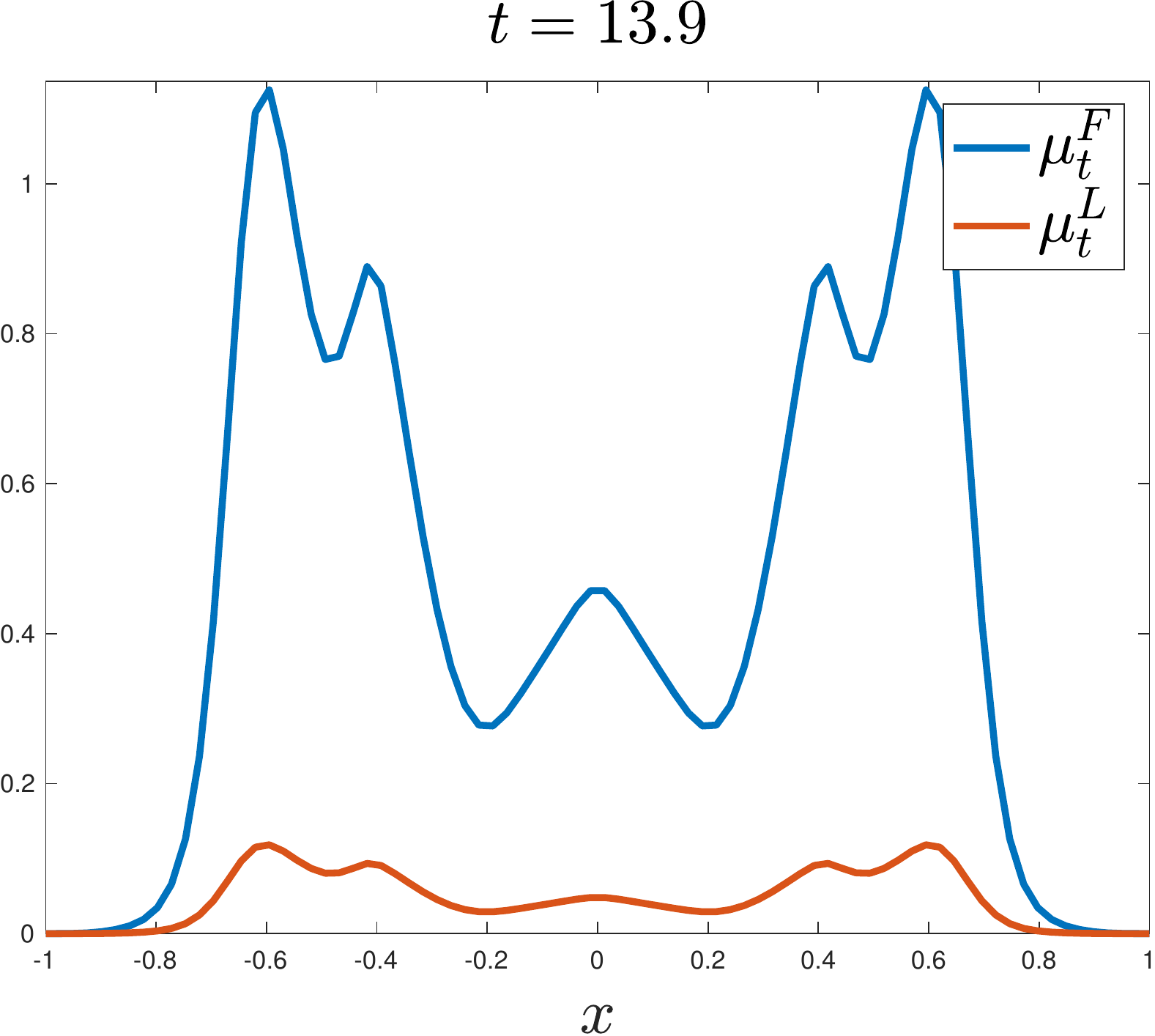}
\\
\includegraphics[scale=0.275]{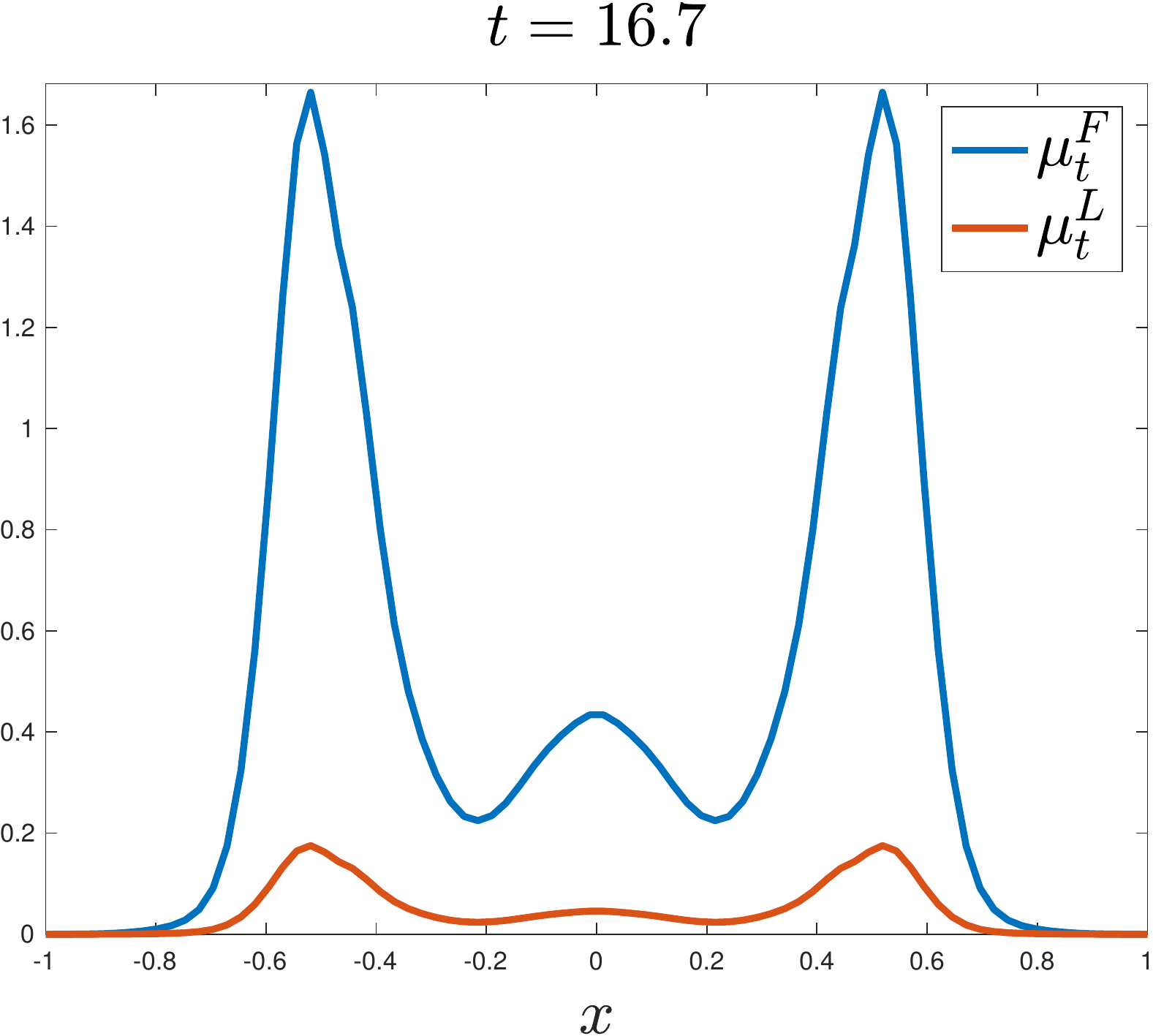}
\includegraphics[scale=0.275]{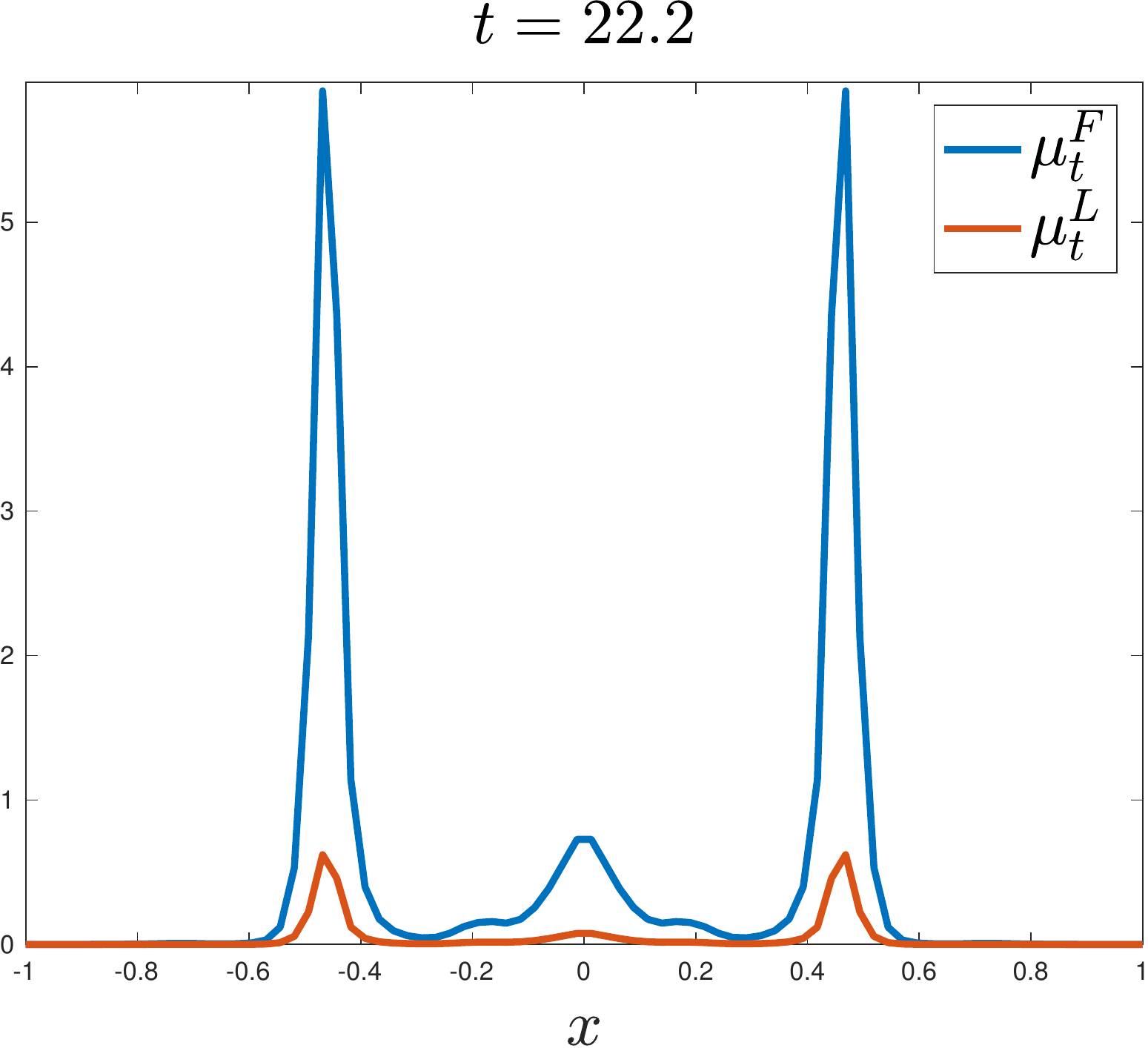}
\includegraphics[scale=0.275]{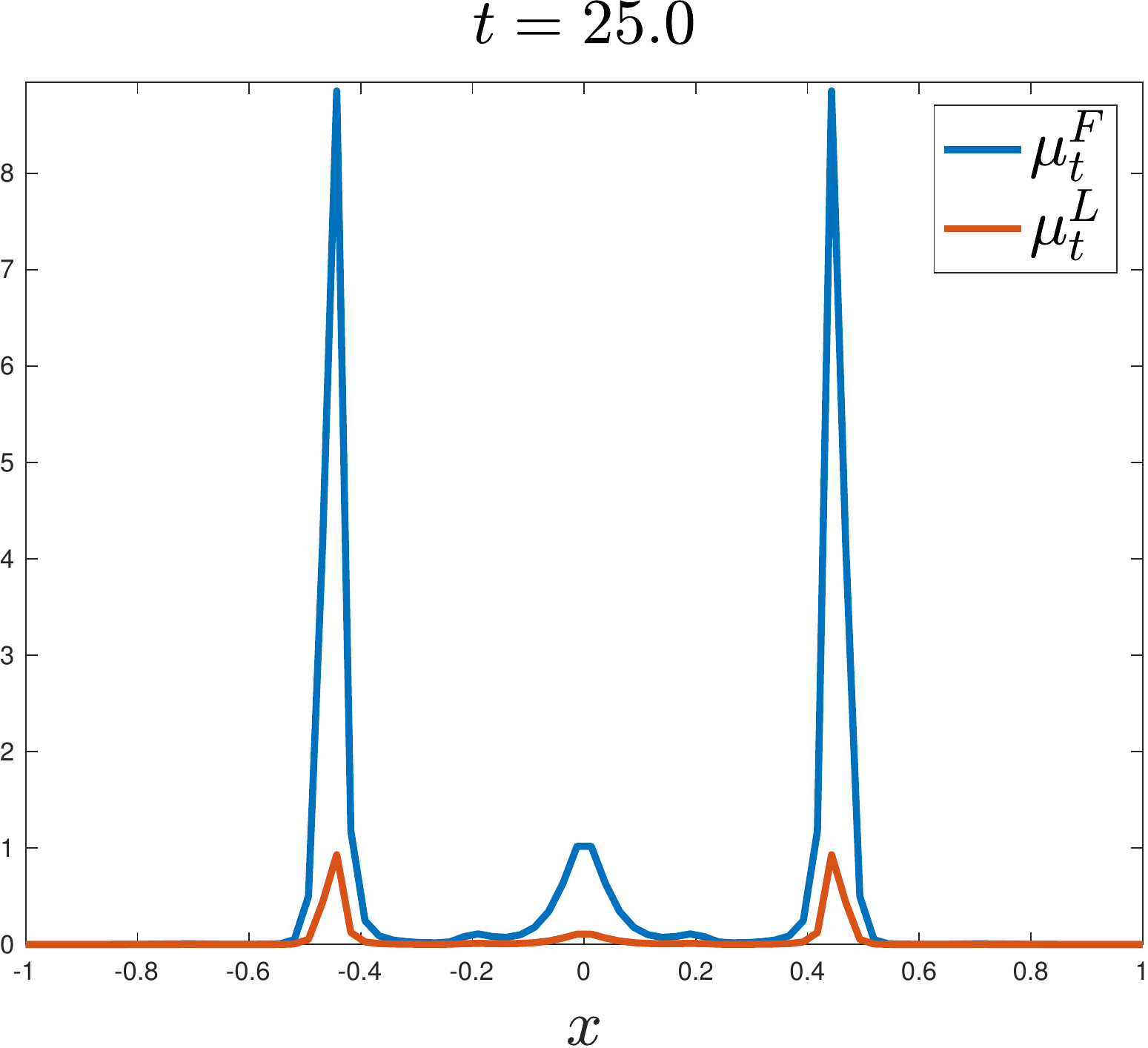}

\caption{{\em Test Ia}. From left to right, and top to bottom row we show the emergence of consensus with leaders' interaction confidence level $C^L=0.6$, and followers' interaction confidence level $C^F=0.2$. Consensus state is not yet reached and three main clusters emerge.}\label{fig2}
\end{figure}

\vspace{+0.25cm}
\noindent
{\em Test Ib: Density-dependent rates.} 
We consider birth rates depending on the densities $\mu_t^F,\mu_t^L$. We consider the variance measure of $\mu_t^L$ defined as follows
\begin{align}\label{eq:DmuL}
\mathcal{V}(\mu_t^L)=\frac{1}{|\sigma_t(L)|^2}\int_{\Omega\times\Omega} |x-y |^2\,d\mu_t^L(x)\, d\mu_t^L(y),
\end{align}
which measures the spread of the solution $\mu^L_t$ over $\Omega$.  The birth rate of leaders $\alpha_F$ is selected as a switching function with respect to the dispersion measure \eqref{eq:DmuL}, such that creation is activated only when the dispersion is above a certain threshold $\delta_F\geq 0$. 
Thus, we consider the following Lipschitz approximation of the indicator function
\begin{align}\label{T1b:alfaF}
\alpha_F(\mu_t^F,\mu_t^L) =\frac{1}{1+e^{c_F(\delta_F-\mathcal{V}(\mu^L_t))}}
\end{align}
with $c_F\gg1$, here we select $c_F = 1000$, and $\delta_F=0.15$.

Note that function \eqref{eq:DmuL} is exactly of the form \eqref{eq:esempio}, with $f(x)=|x|^2$. At the same time equation \eqref{T1b:alfaF} complies with Assumption \ref{ass:lipalpha}, as shown in \ref{sec:assumptions}, as long as $|\sigma_t(L)|\ge \epsilon$ for a fixed threshold $\epsilon >0$. This last condition can be easily checked along the evolution. 

The creation of followers given by rate $\alpha_L$ is instead determined by the following switching function
\begin{align}\label{T1b:alfaL}
\alpha_L(\mu_t^F,\mu_t^L) =\frac{1}{1+e^{c_L(\delta_L-|\sigma_t(L)|)}},
\end{align}
namely when the total mass of leaders is above a  threshold $\delta_L$. Here we selected  $\delta_L = 0.25$, and $c_L= 1000$. 

Similarly to the previous test, we show in Figure \ref{fig3} the total density $\nu_t(x)$ on $[0,T]\times\Omega$ and the time evolution of $\sigma_t(F)$ and $\sigma_t(L)$. In this case we observe the emergence of a consensus state before final time $T= 25$. This is explained by the large amount of leaders, whose mass increases until the total mass is too spread over the domain $\Omega$ (and so the measure $\mathcal{V}(\mu^L_t)$ is above the threshold $\delta_F$). As soon as the threshold is reached, the creation of leaders is stopped and $\sigma_t(F), \sigma_t(L)$ converge to an asymptotic state thanks to the concentration of the total mass.
In Figure \ref{fig4} we show some frames of the time evolution of $\mu_t^F,\mu_t^L$. 

\begin{figure}
\centering
\includegraphics[scale=0.375]{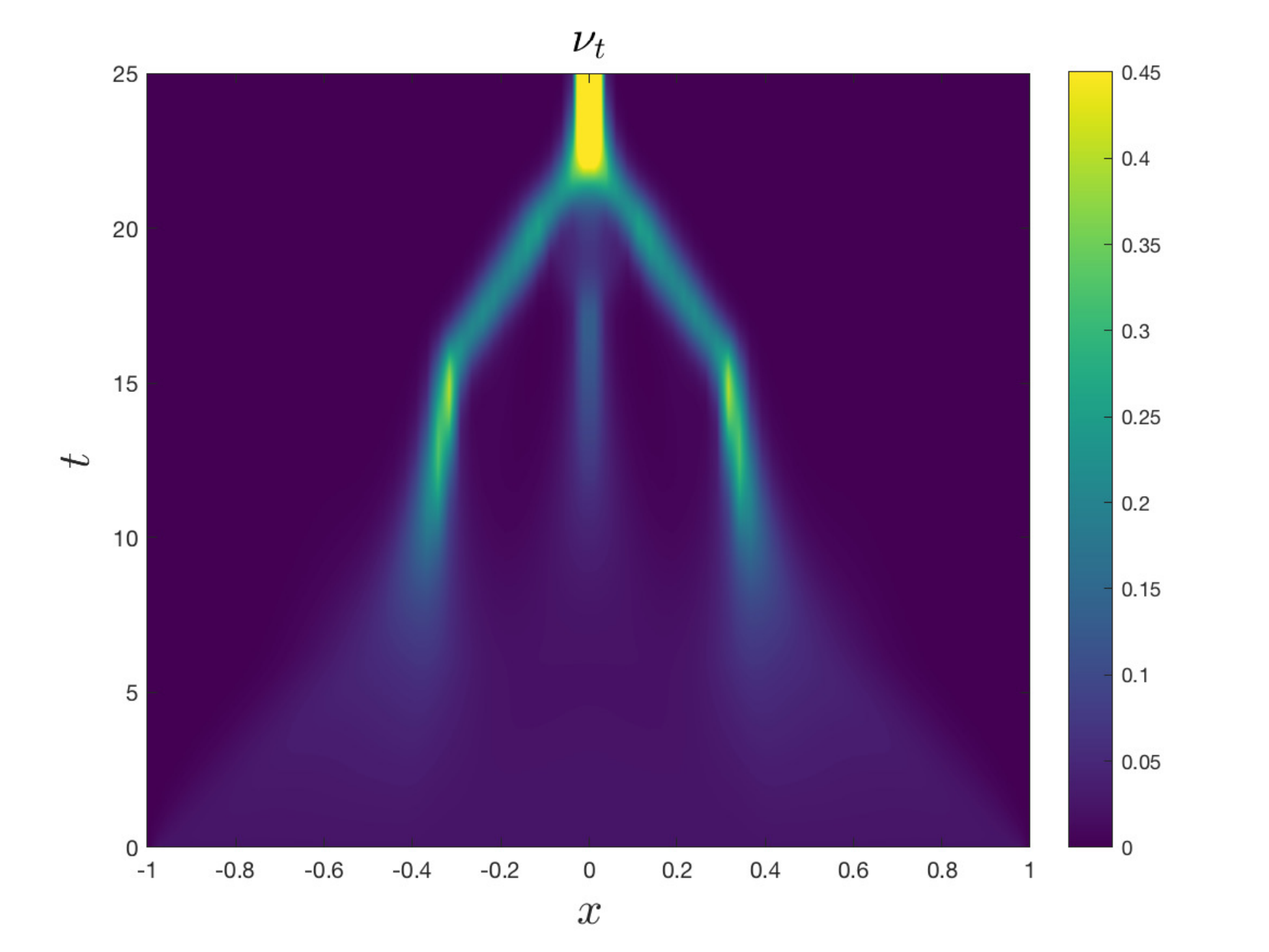}
\qquad
\includegraphics[scale=0.375]{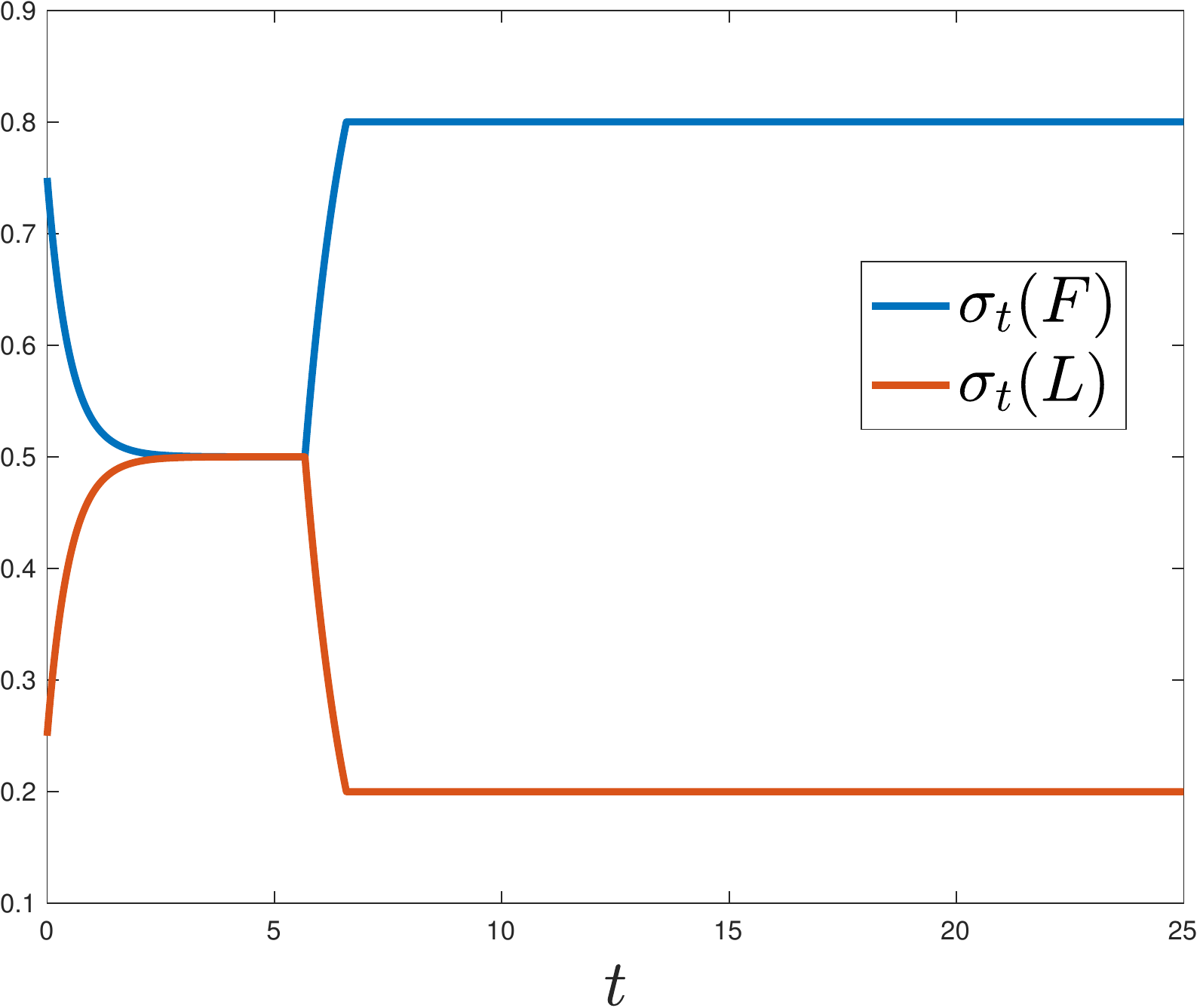}
\caption{{\em Test Ib}. Left: the total density $\nu_t$ in the space time domain $[-1,1]\times[0,T]$. Right: the non-linear evolution of the followers' and leaders' mass, $\sigma_t(F),\sigma_t(L)$.}\label{fig3}
\end{figure}

\begin{figure}
\centering
\includegraphics[scale=0.275]{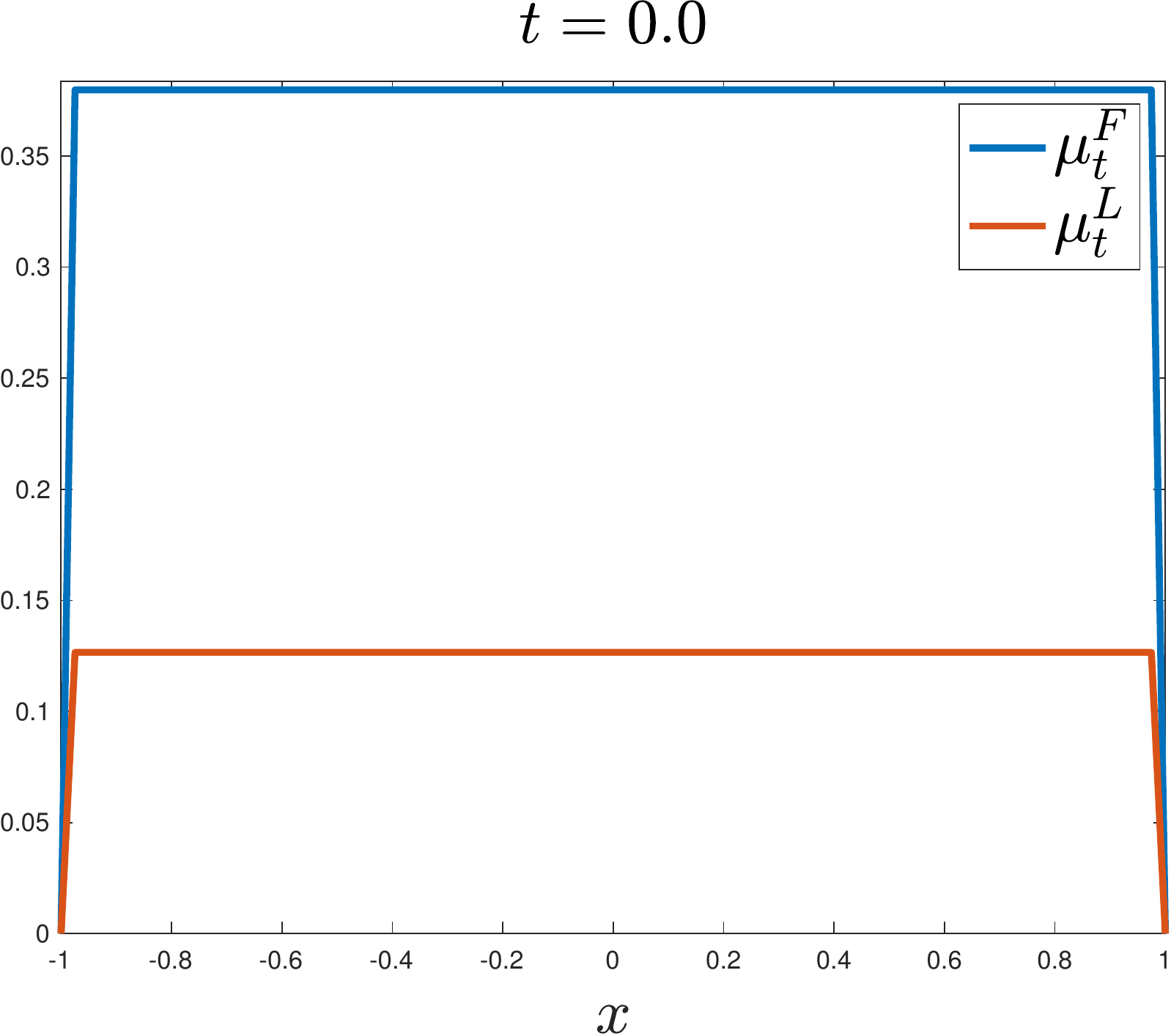}
\includegraphics[scale=0.275]{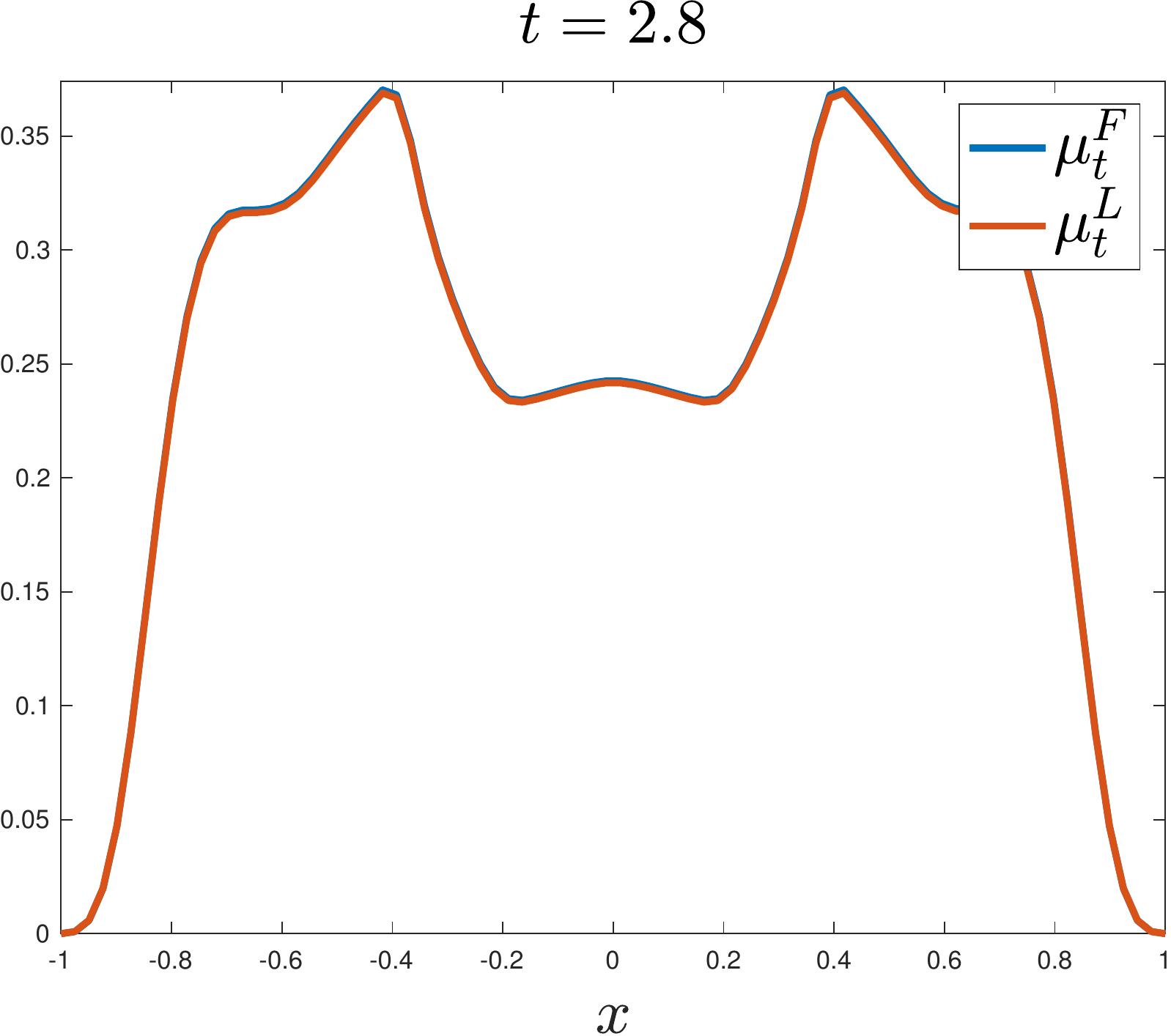}
\includegraphics[scale=0.275]{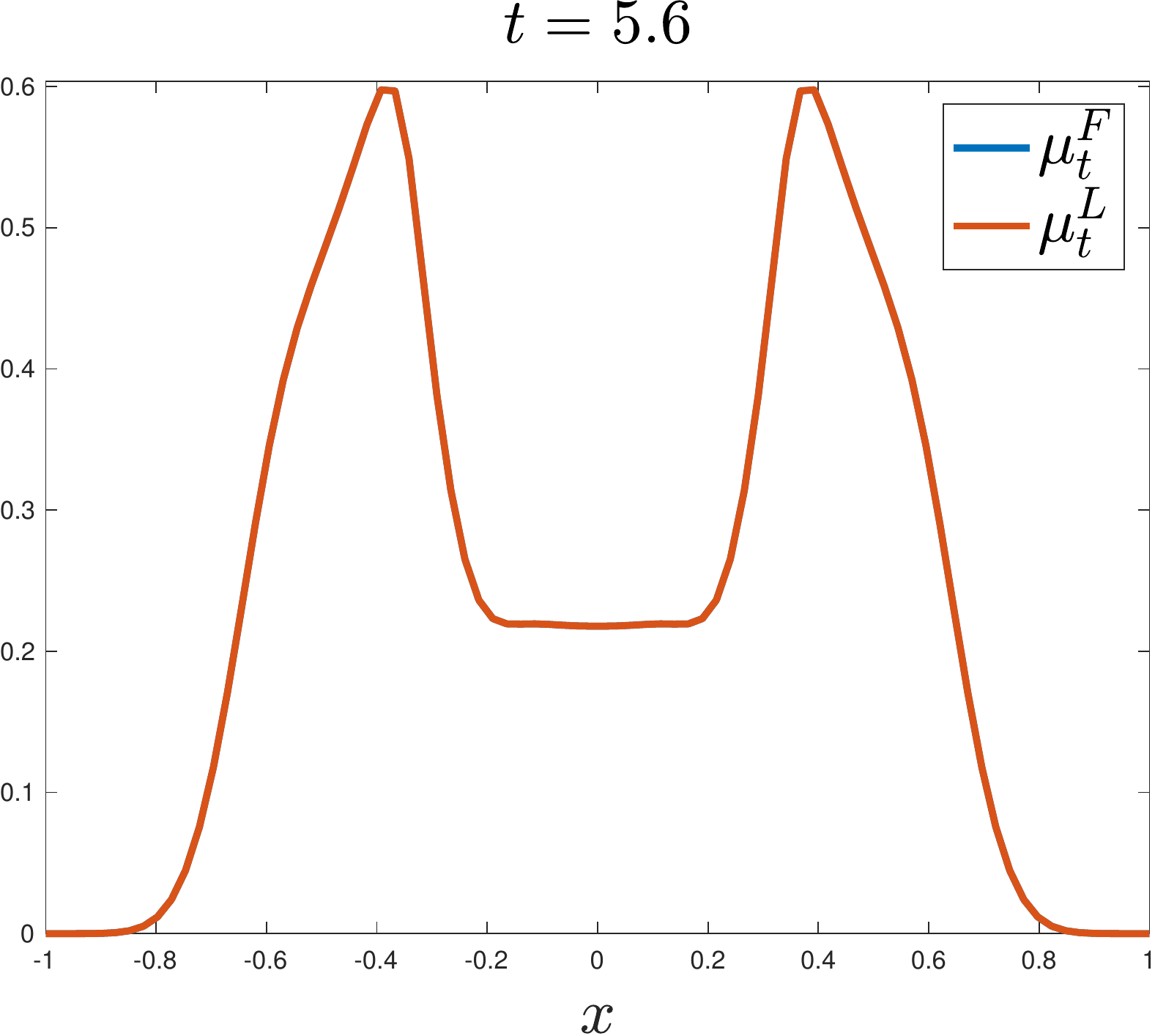}
\\
\includegraphics[scale=0.275]{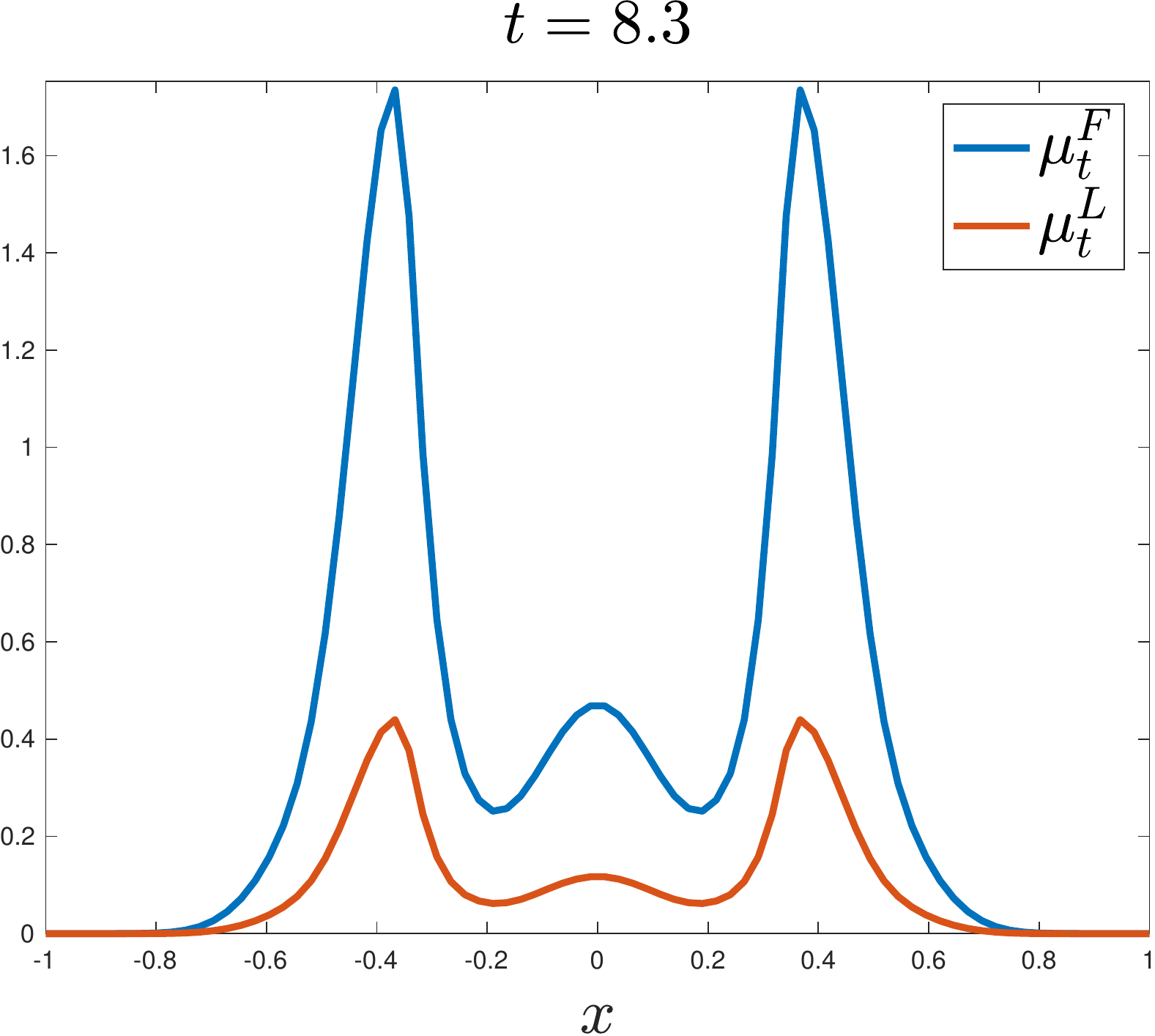}
\includegraphics[scale=0.275]{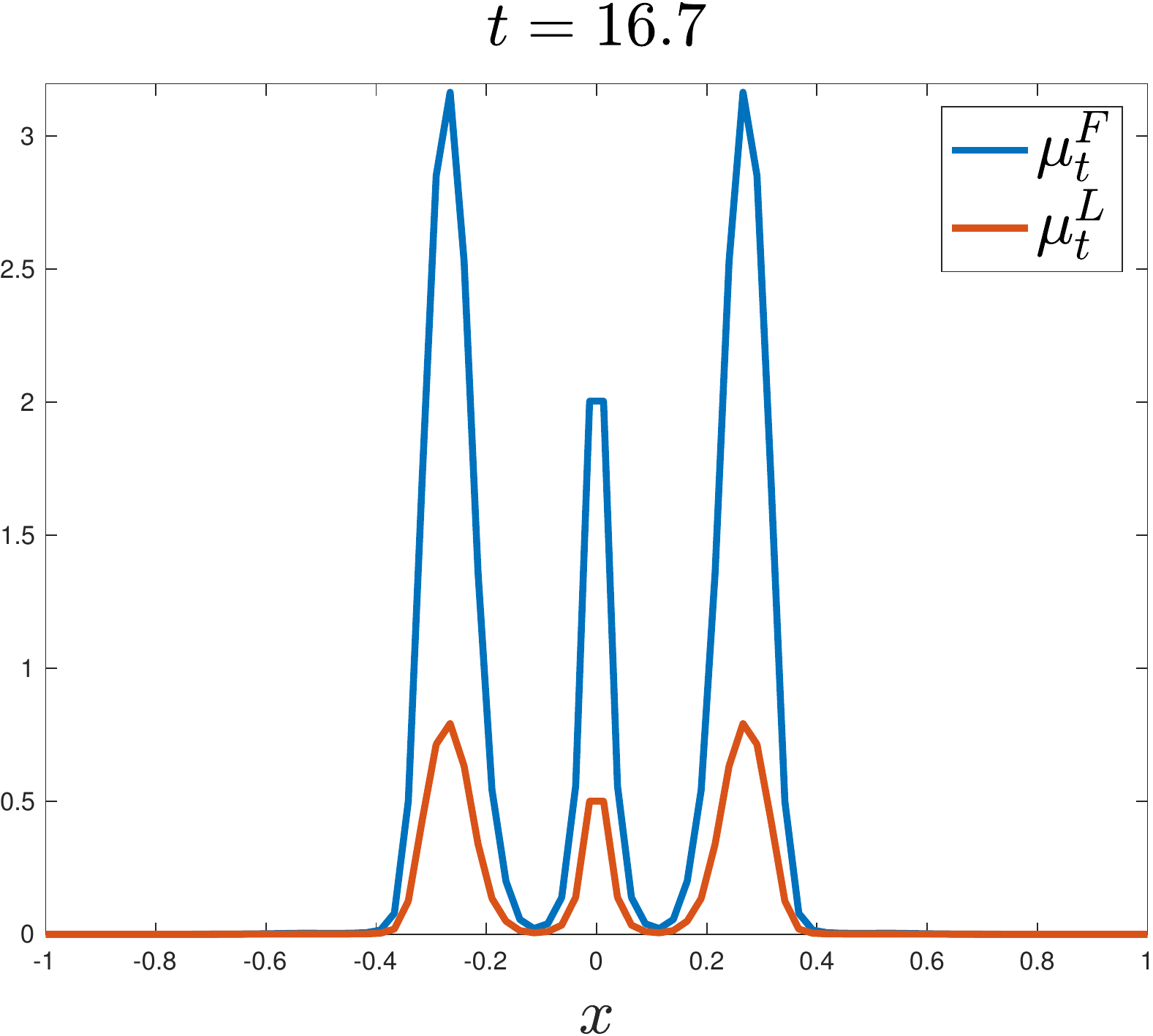}
\includegraphics[scale=0.275]{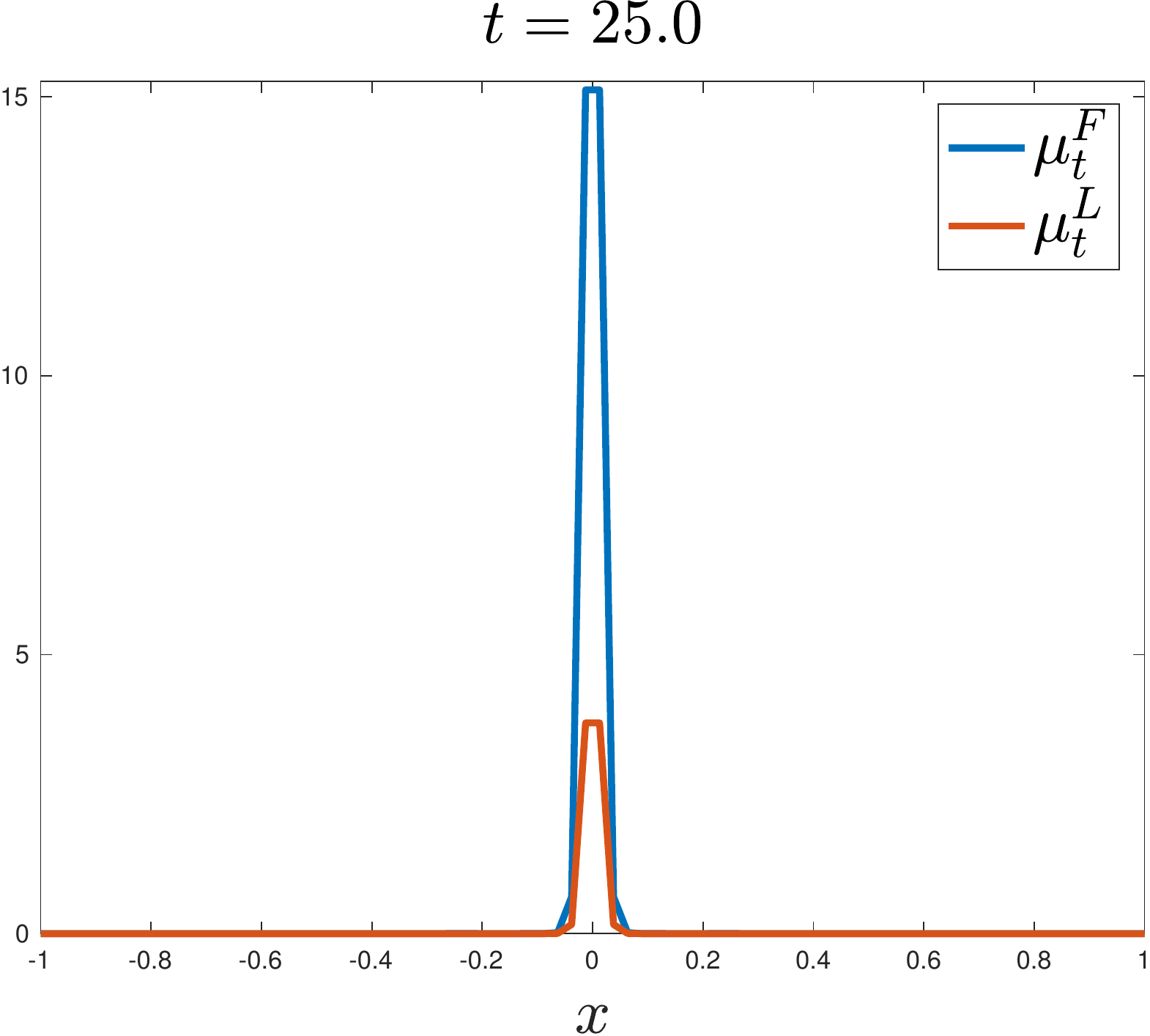}

\caption{{\em Test Ib}. From left to right, and top to bottom row we show the emergence of consensus with leaders' interaction confidence level $C^L=0.6$, and followers' interaction confidence level $C^F=0.2$. Consensus in $x = 0$ at final time is reached.}\label{fig4}
\end{figure}

\subsection{Test II: Aggregation dynamics}
We consider an aggregation dynamics ruled by an attraction towards the population of leaders, and repulsion towards the followers. 
Hence, we assume the following interaction kernels
\begin{align*}
K^F(x) &= a^F(x)x,\qquad a^F(x) = -\frac{\ell_R}{(\epsilon+|x|)^{c_R}},\\
K^L(x) &= a^L(x)x,\qquad a^L(x) = (\epsilon+|x|)^{c_A},
\end{align*}
with non-negative parameters $\ell_R, c_R,c_A$ and $\epsilon = 0.001$. 

The exchange of mass between leaders and followers is described as follows: we consider a constant rate $\alpha_L$, whereas leaders' birth rate $\alpha_F$ depends non-linearly on the followers' density. Similarly to Test I we use the variance measure \eqref{eq:DmuL} for $\mu_t^F$ as follows 
\begin{align}\label{eq:DmuF}
\mathcal{V}(\mu_t^F)=\frac{1}{|\sigma_t(F)|^2}\int_{\Omega\times\Omega} |x-y |^2\,d\mu_t^F(x)\, d\mu_t^F(y).
\end{align}
The birth rate $\alpha_F$ is the switching function \eqref{T1b:alfaF}, modified as follows
\begin{align}\label{T2:alfaF}
\alpha_F(\mu_t^F,\mu_t^L) =\frac{1}{1+e^{c_F(\delta_F-\mathcal{V}(\mu^F_t))}}, 
\end{align}
with $c_F \gg 1$ and $\delta_F\geq 0$.
Hence, we expect the total mass of leaders to increase when the followers' density is too spread over the domain $\Omega$, and to decrease when followers' density is sufficiently concentrated. 

Note that this choice controls the competition between the repulsive action of followers' kernel and the attraction of the leaders' one. In order to show the richness of this setting we consider two different cases. The choice of the parameters are reported in Table \ref{Tab2}.

For the numerical solution of the mean-field dynamics we fix the computational domain $\Omega =[-1,1]$ with zero-flux boundary conditions, discretized with $N=80$ space grid points, and time step $\Delta t = 0.0063$ and final time $T =25$. 

\begin{table}[t]
\centering
\caption{Computational parameters for Test II.}
\begin{tabular}{c|c|c|c|c|c|c|c|c}
\hline
Test  & $c_A$& $c_R$ & $\ell_R$ & $\alpha_F$  &$\delta_F$& $\alpha_L$&$\sigma_0(F)$ &$\sigma_0(L)$ \\
\hline
\hline
 IIa & 3 & 0.75  &0.1& \eqref{T2:alfaF} &0.15& 0.25&0.75&0.25 \\
 IIb & 2 & 0.5&0.1& \eqref{T2:alfaF}&0.2& 0.25&0.75&0.25 \\
\hline
\end{tabular}
\label{Tab2}
\end{table}

\vspace{+0.25cm}
\noindent

\vspace{+0.5cm}
\noindent
{\em Test IIa: Uniform initial data.} 
We consider an initial configuration where leaders and followers occupy the same domain's portion identified by  the function
\[
h(x) = \frac{1}{u-d}\chi_{\left[-\frac{u}{2},-\frac{d}{2}\right]\cup\left[\frac{d}{2},\frac{u}{2}\right]}(x)
\]
with $d = 0.3$ and $u=1.3$. The initial data of \eqref{eq:macroleadfollstrong} is defined as follows
\begin{align}\label{initialdataT2a}
\mu^F_0(x) =\sigma_0(F)h(x),\quad \mu^L_0(x) = \sigma_0(L)h(x).
\end{align}

We report in Figure \ref{fig:Test2a} the evolution of the system, observing an oscillating behavior of the total mass of leaders and followers towards a stable configuration of the densities' profiles. Indeed, initially the density of leaders increases to balance the spread of the initial mass \eqref{initialdataT2a}, up to the moment when the birth rate $\alpha_F$ is switched off. Subsequently, the mass of followers starts to increase, together with the intensity of the repulsion force. Therefore, the dispersion measure \eqref{eq:DmuF} increases again, until the reactivation of the birth rate function $\alpha_F$. At final time $T = 25$, the system has reached a stationary configuration of the densities $\mu^L_t,\mu^F_t$ as well as of the total masses $\sigma_t(L),\sigma_t(F)$.

\begin{figure}
\centering
\includegraphics[scale=0.375]{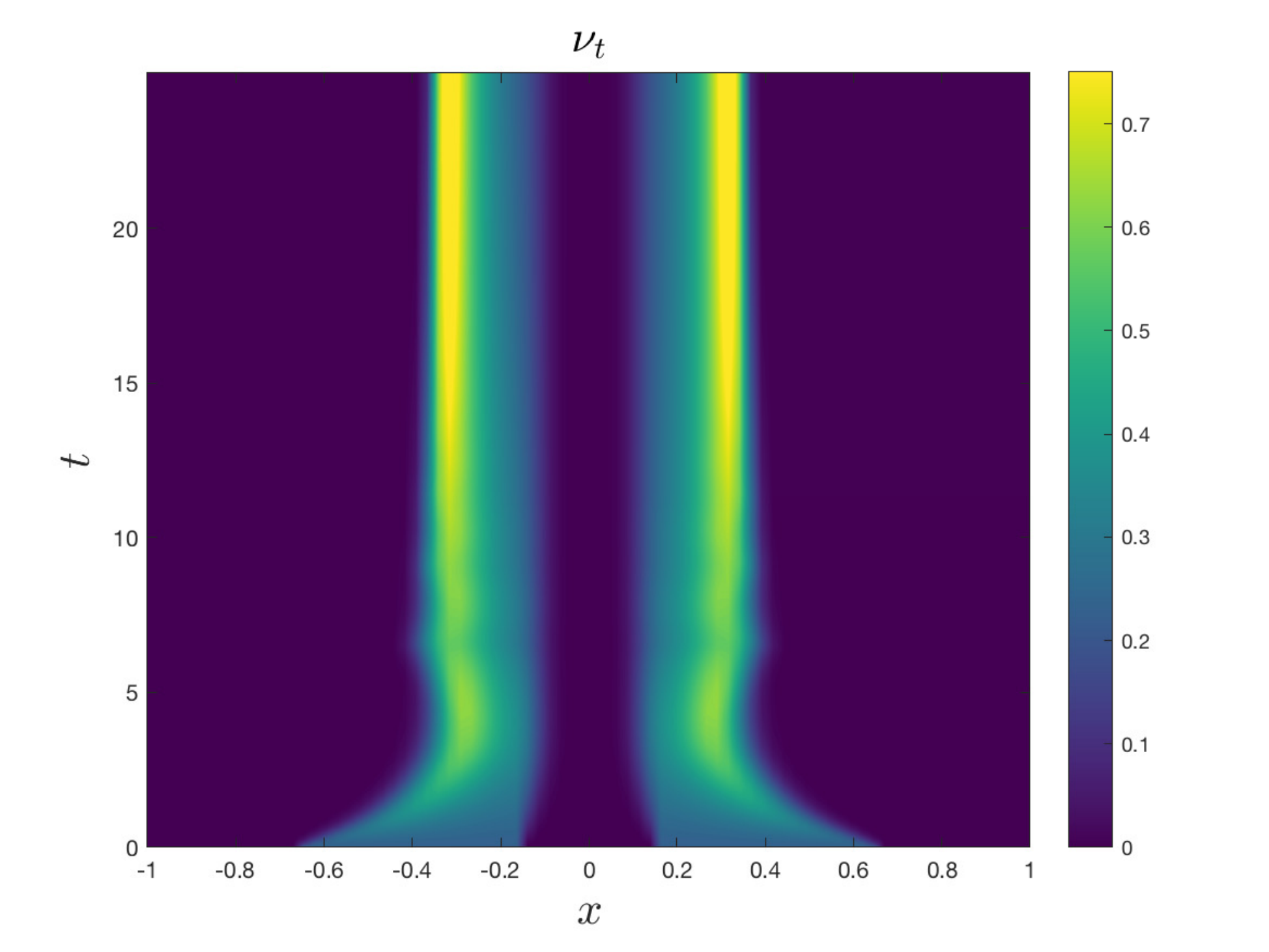}
\qquad
\includegraphics[scale=0.375]{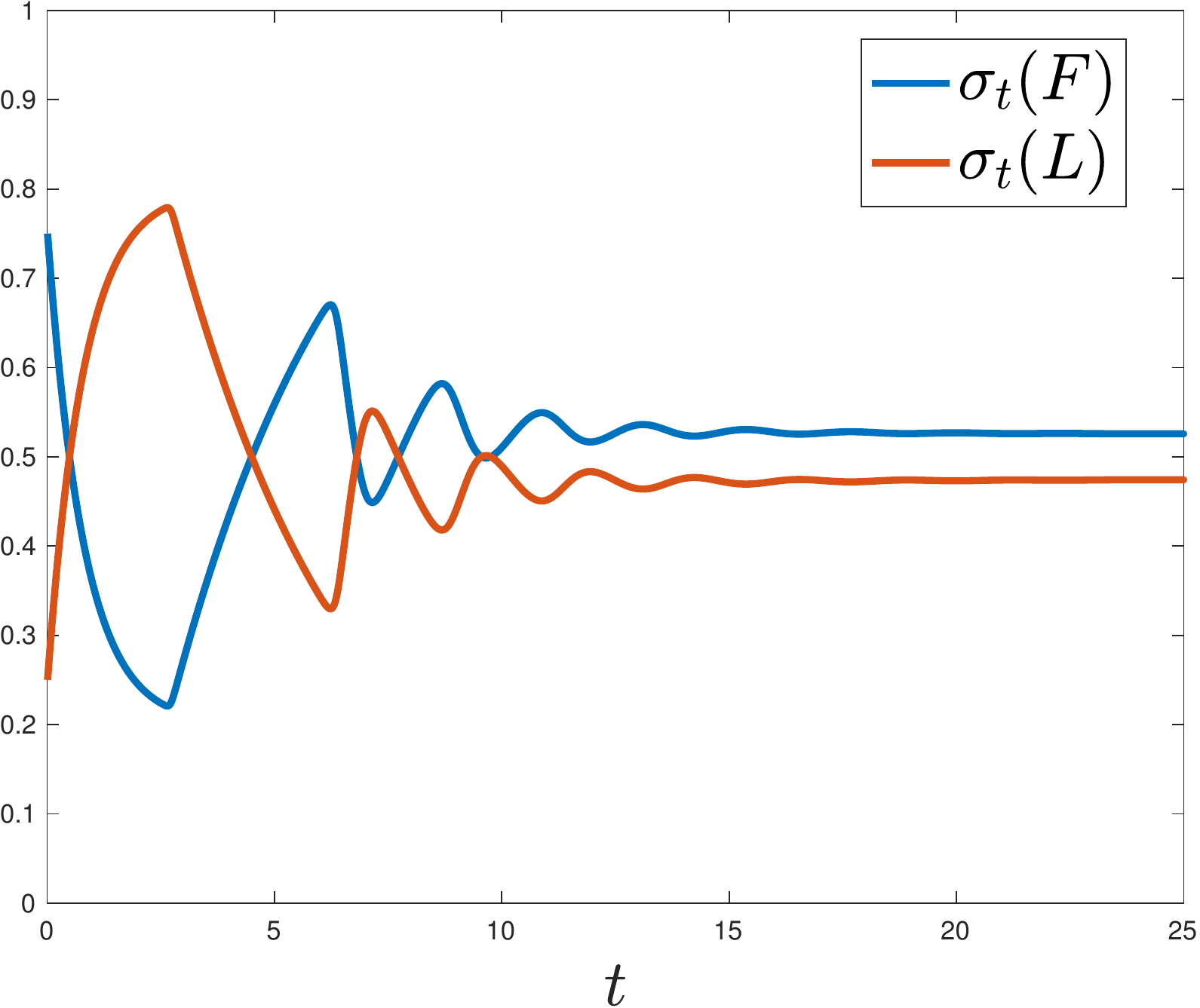}\\
%
\includegraphics[width=3.5cm]{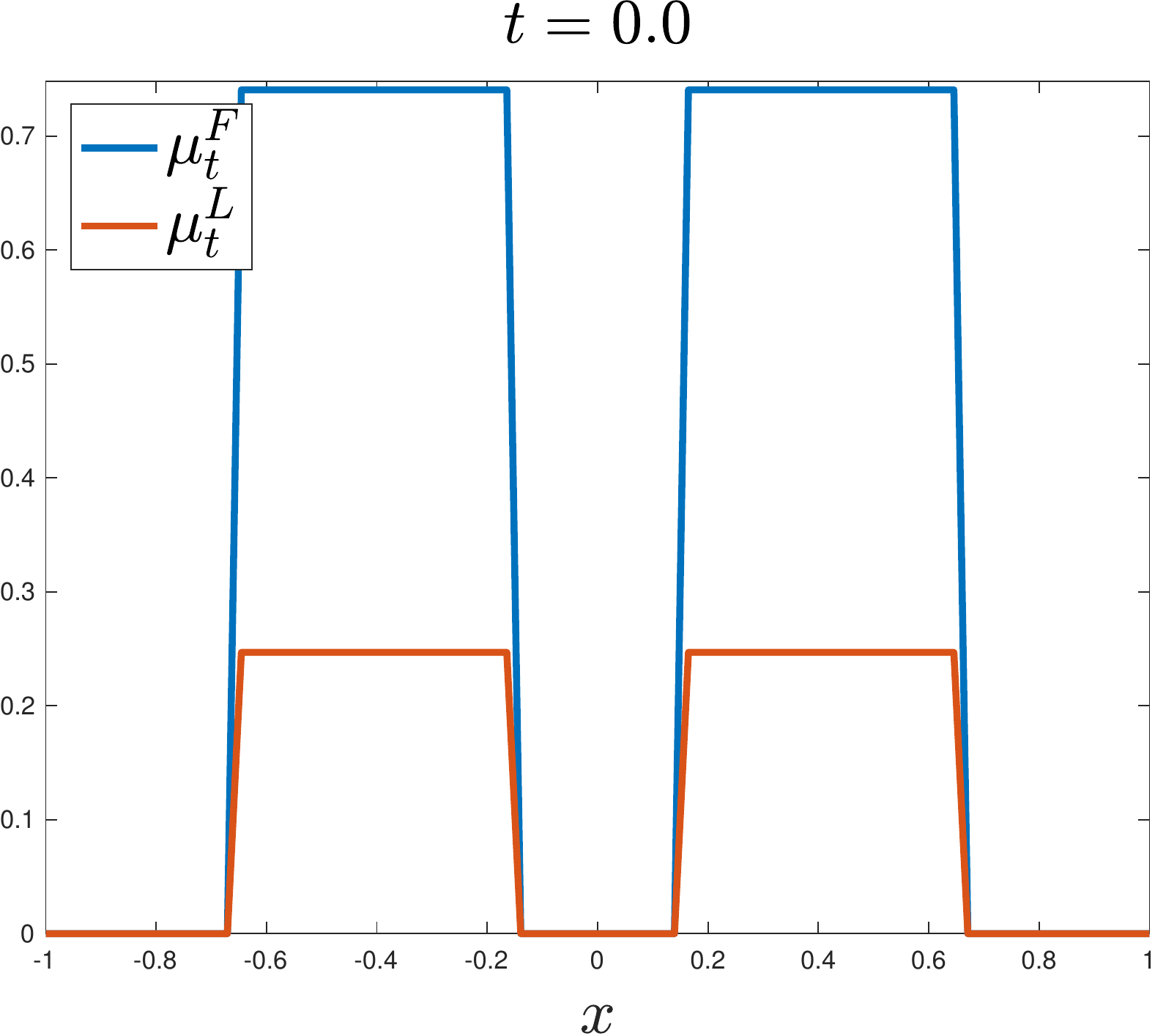}
\includegraphics[width=3.5cm]{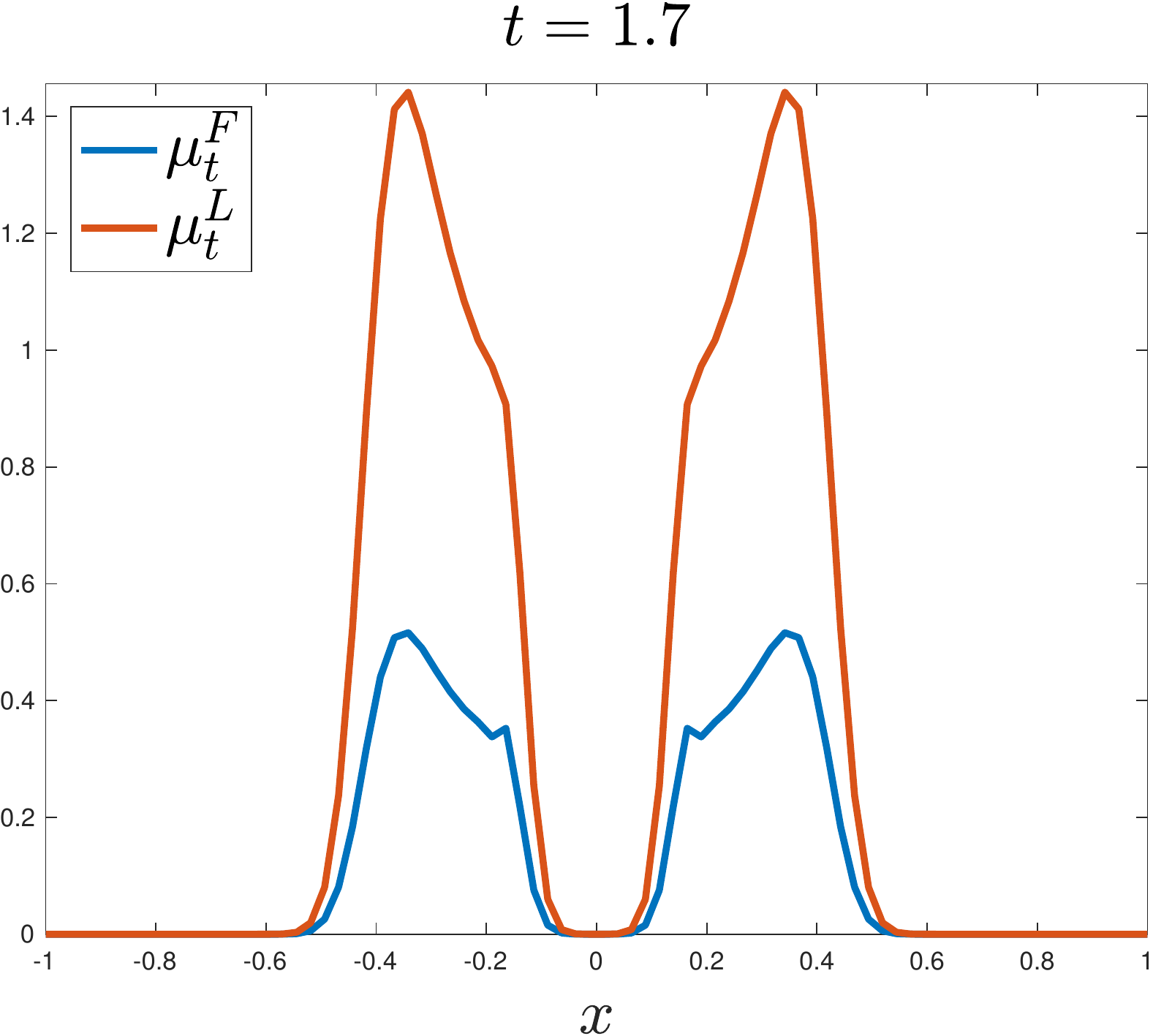}
\includegraphics[width=3.5cm]{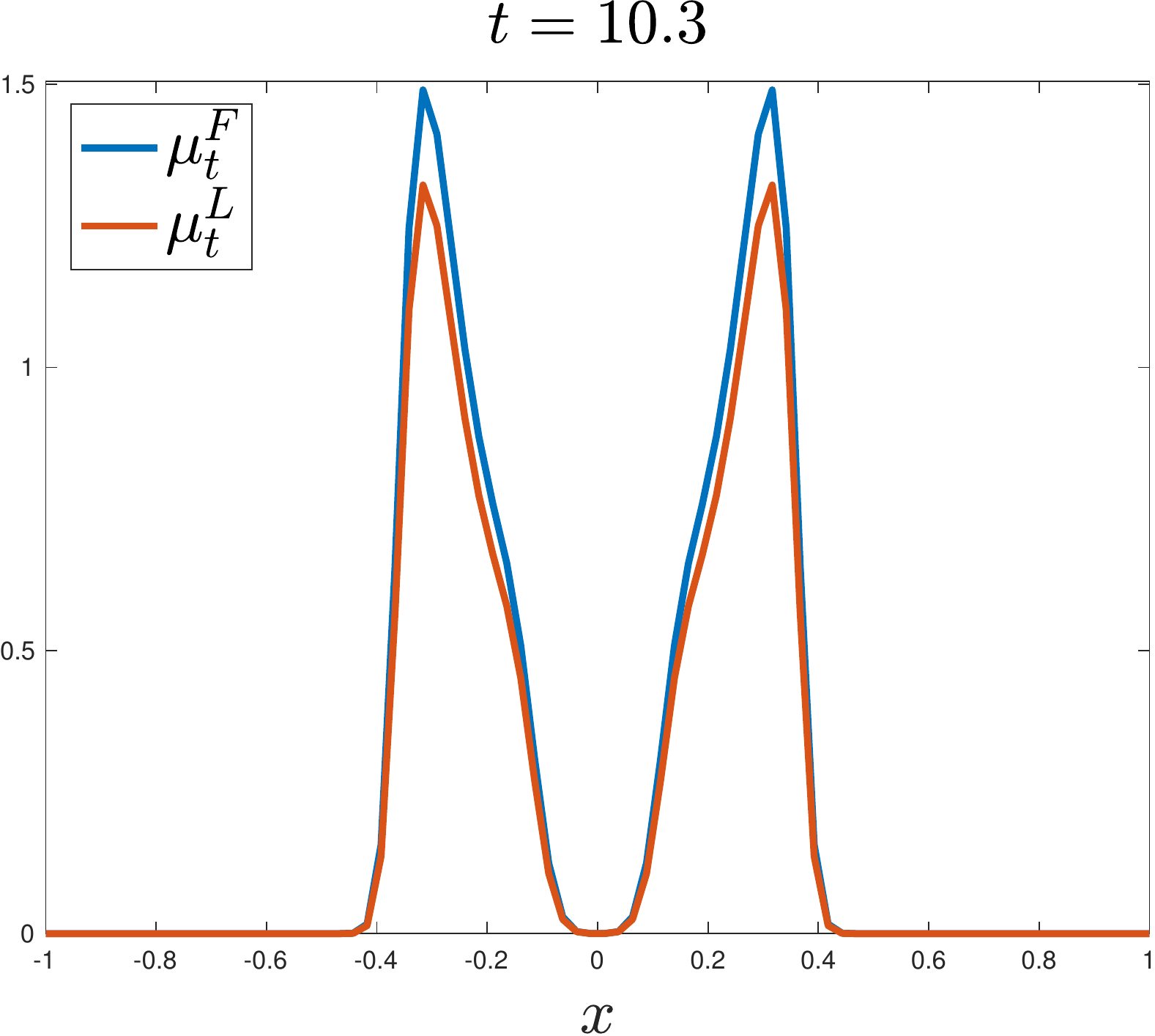}
\includegraphics[width=3.5cm]{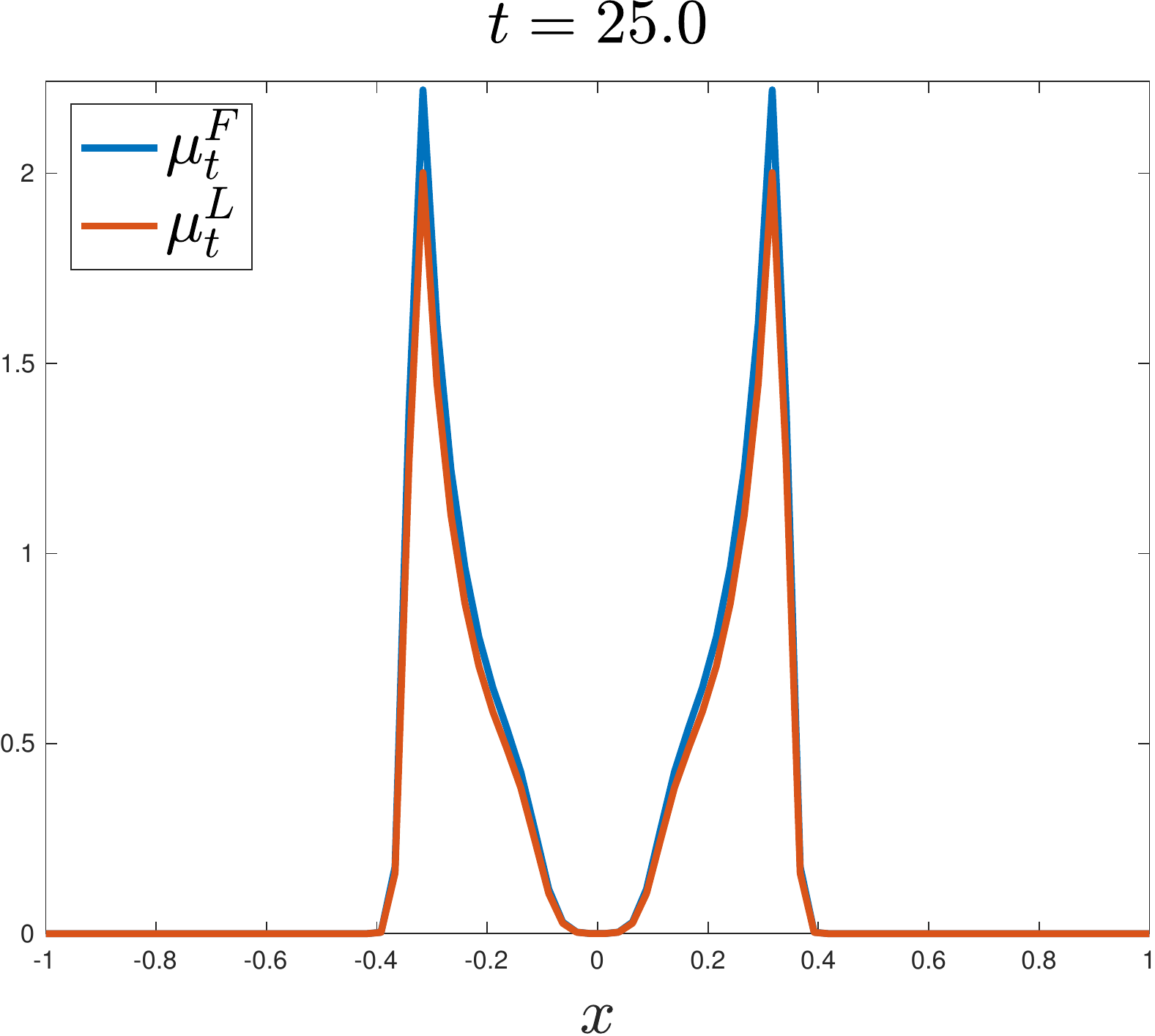}

\caption{{\em Test IIa}. Top line: the left-hand picture shows the total density $\nu_t$, the right-hand picture shows the evolution of the masses $\sigma_t^F,\sigma_t^L$.
Bottom line: From left to right we depict the evolution of the leaders' and followers' densities from the initial data to the final stationary state.}\label{fig:Test2a}
\end{figure}

\vspace{+0.5cm}
\noindent
{\em Test IIb: Confinement.} 
We consider a confinement setting, where the leaders' density surrounds the initial density of followers.  In this particular situation, differently from the previous cases, Assumption \ref{ass:posmeas0} on the initial data is not plausible anymore, therefore we renounce to it. We however recall the reader that an existence and uniqueness theory for system \eqref{eq:macroleadfollstrong} is still available, since Propositions \ref{p-ex} and \ref{p-exun1} do not require   \ref{ass:posmeas0} to be fulfilled.

We introduce the Gaussian function 
 \[
G(x;\varsigma^2) =  \frac{1}{\sqrt{2\pi\varsigma^2}}e^{-\frac{x^2}{\varsigma^2}},
\]
then we define the initial data as follows
\begin{align*}
\mu^F_0(x) =\sigma_0(F)G(x;1/30),\qquad \mu^L_0(x) = \frac{\sigma_0(L)}{2}\left(G(x-0.6;1/90) + G(x+0.6;1/90)\right).
\end{align*}
In this setting, the initial dispersion of followers is not large enough to activate the birth rate $\alpha_F$, \eqref{T2:alfaF}. Indeed we can observe from the first two frames of Figure \ref{fig:Test2b}-bottom row that the density of followers starts to grow on the support of $\nu_t$, while the creation of leaders is not inhibited. In a second step, when the interaction becomes too repulsive, the spread of $\mu^F_t$ activates the creation of leaders, and eventually stabilizes the total density towards a stable configuration, with the masses $\sigma_t(L),\sigma_t(F)$ converging towards a stationary value. 
\begin{figure}
\centering
\includegraphics[scale=0.375]{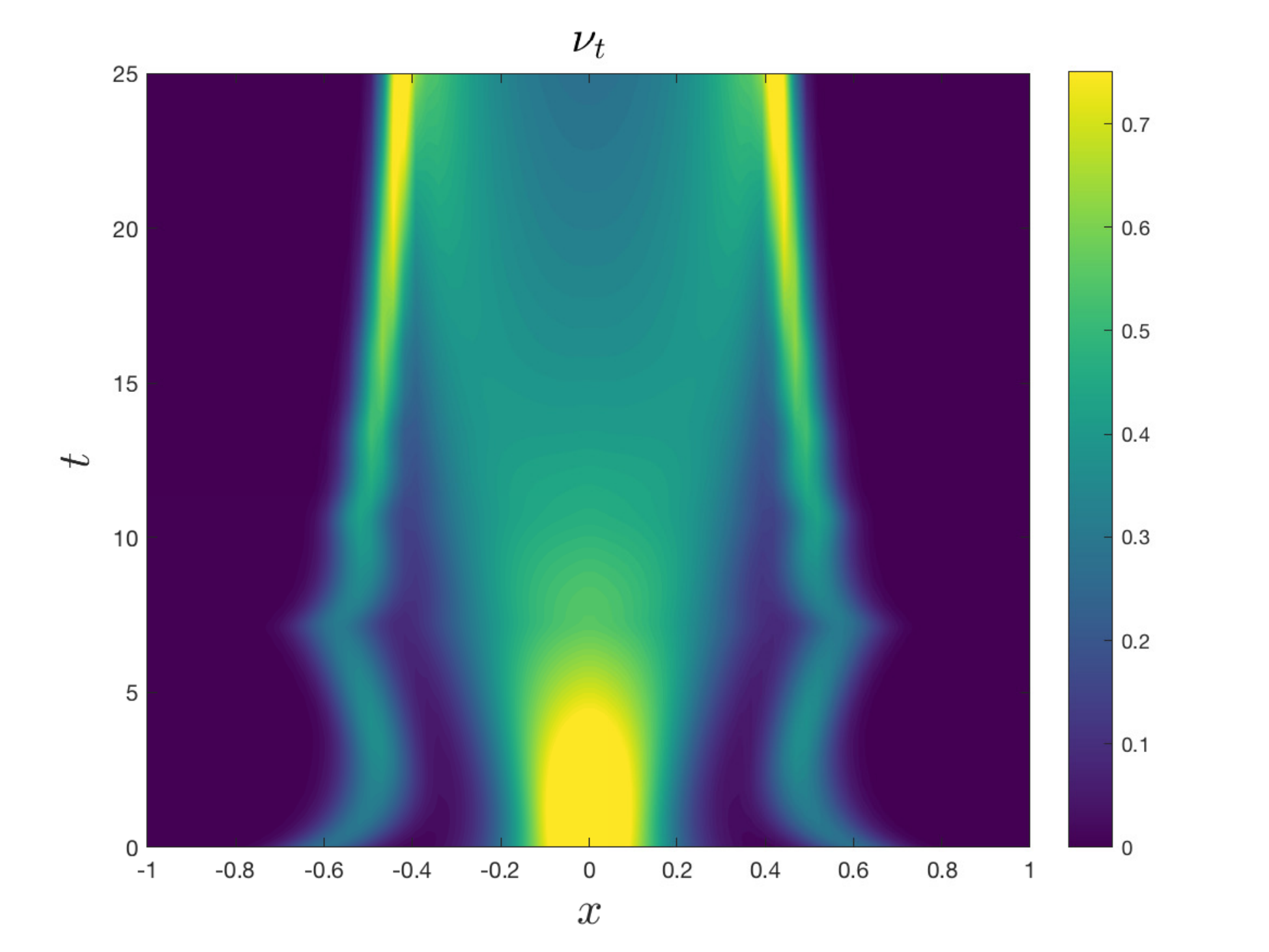}
\includegraphics[scale=0.375]{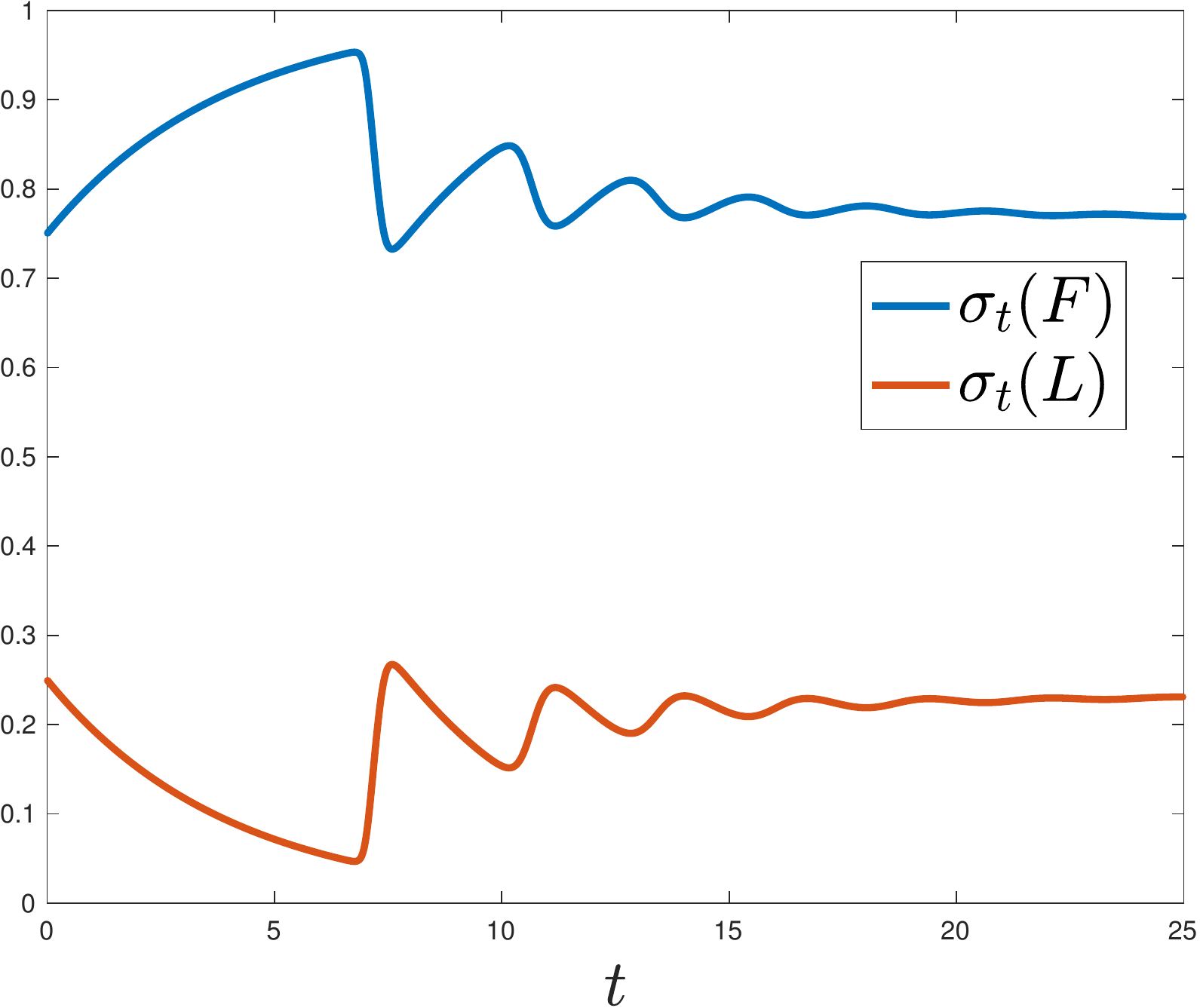}\\
\includegraphics[width=3.5cm]{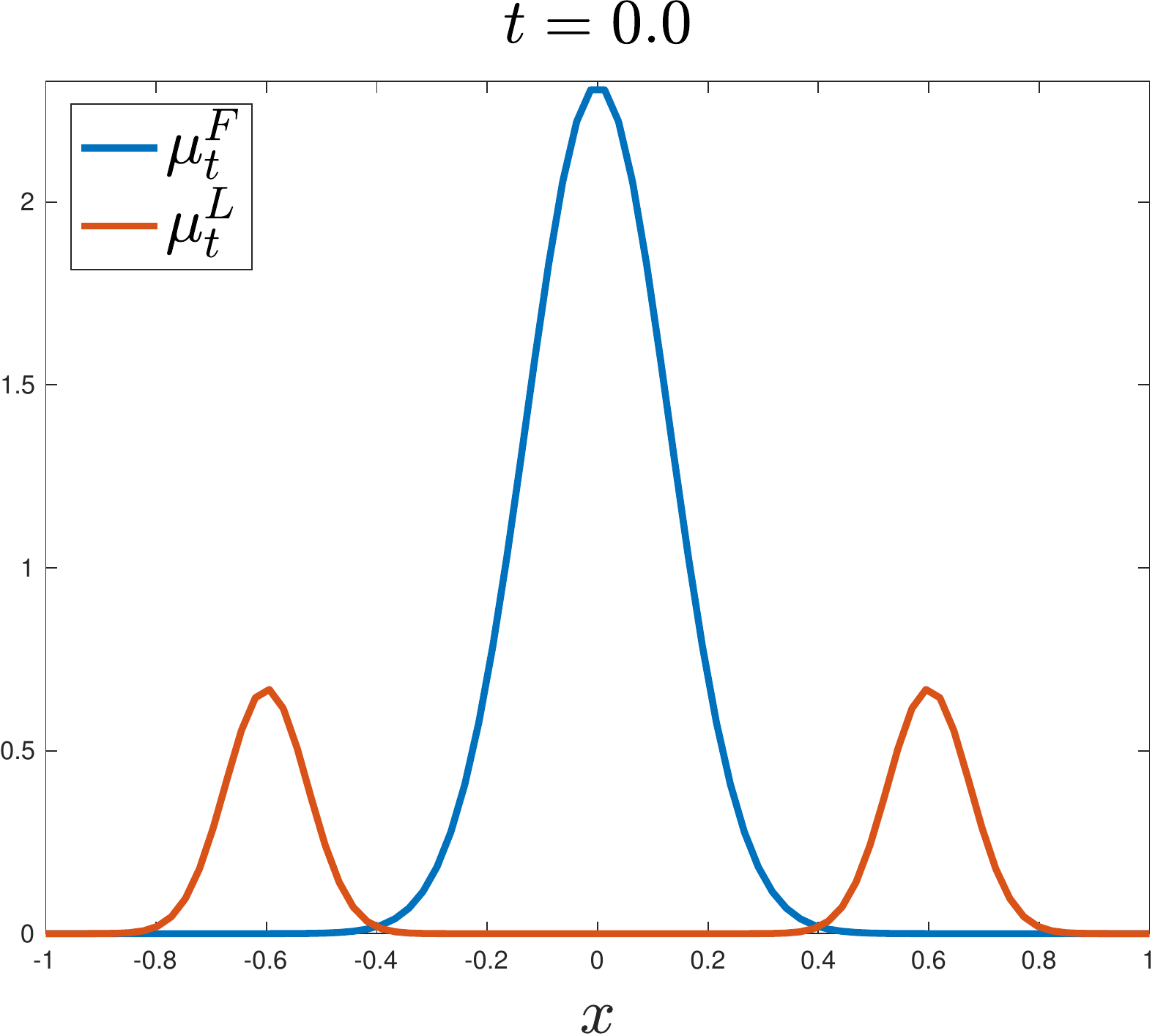}
\includegraphics[width=3.5cm]{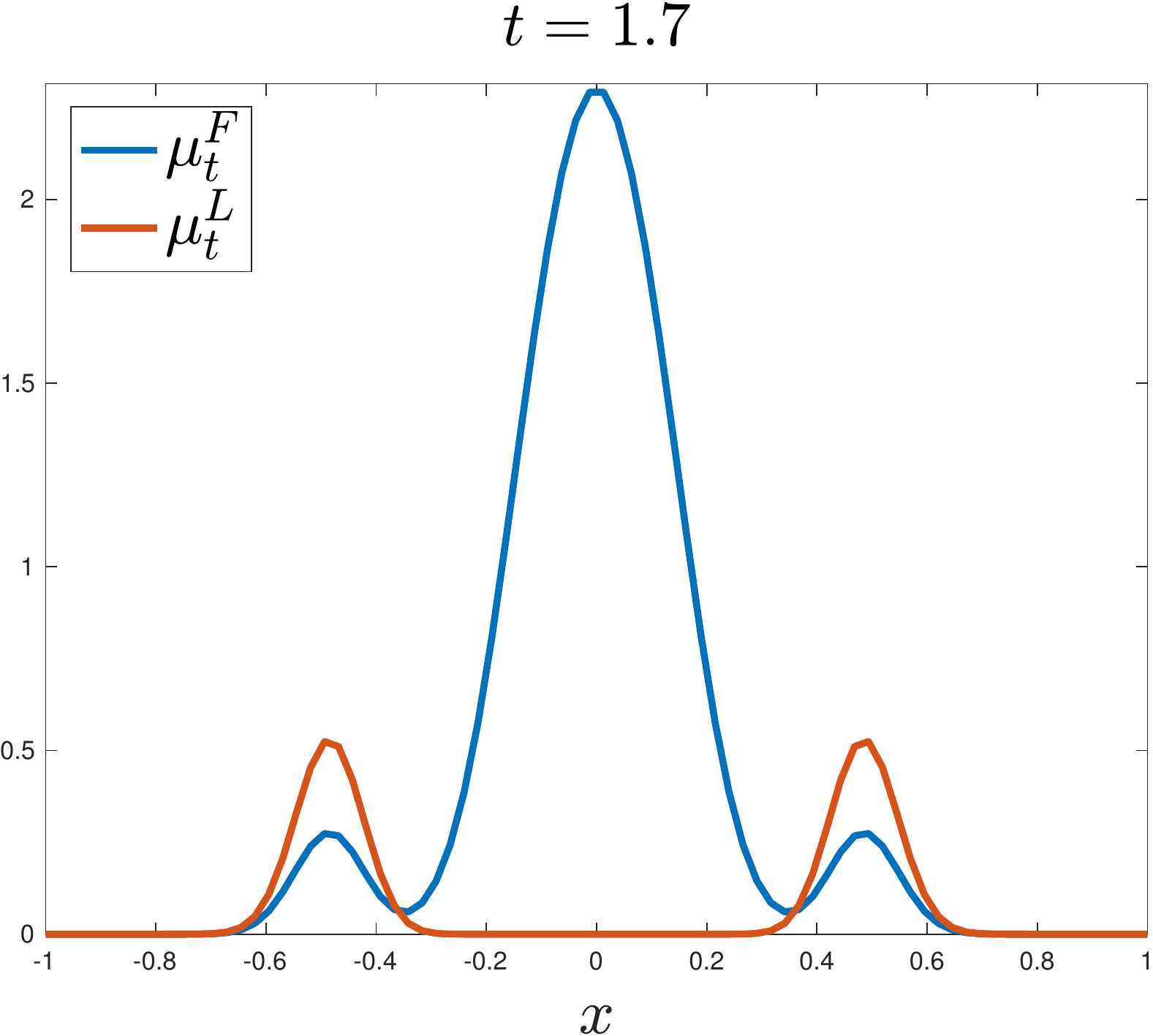}
\includegraphics[width=3.5cm]{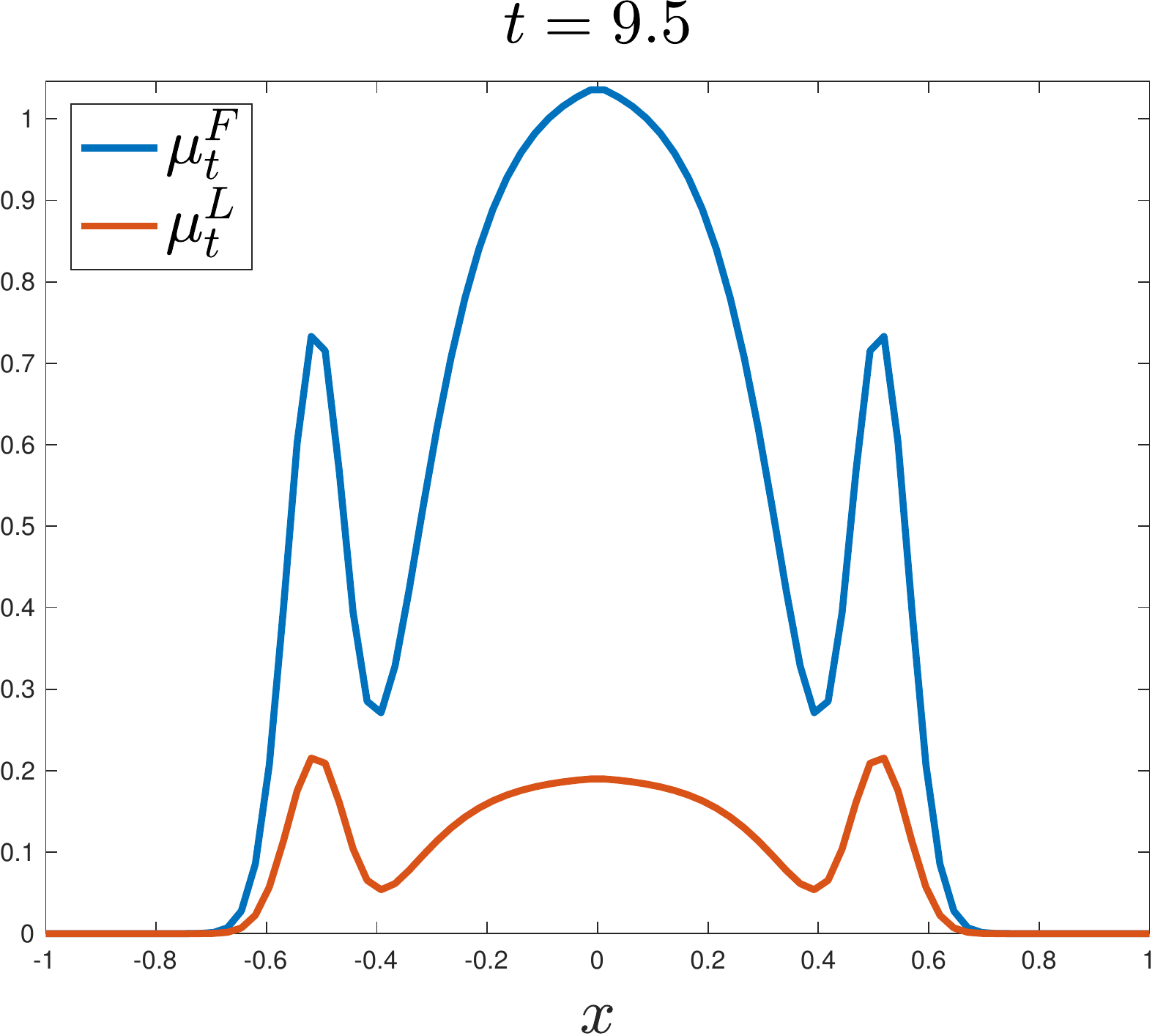}
\includegraphics[width=3.5cm]{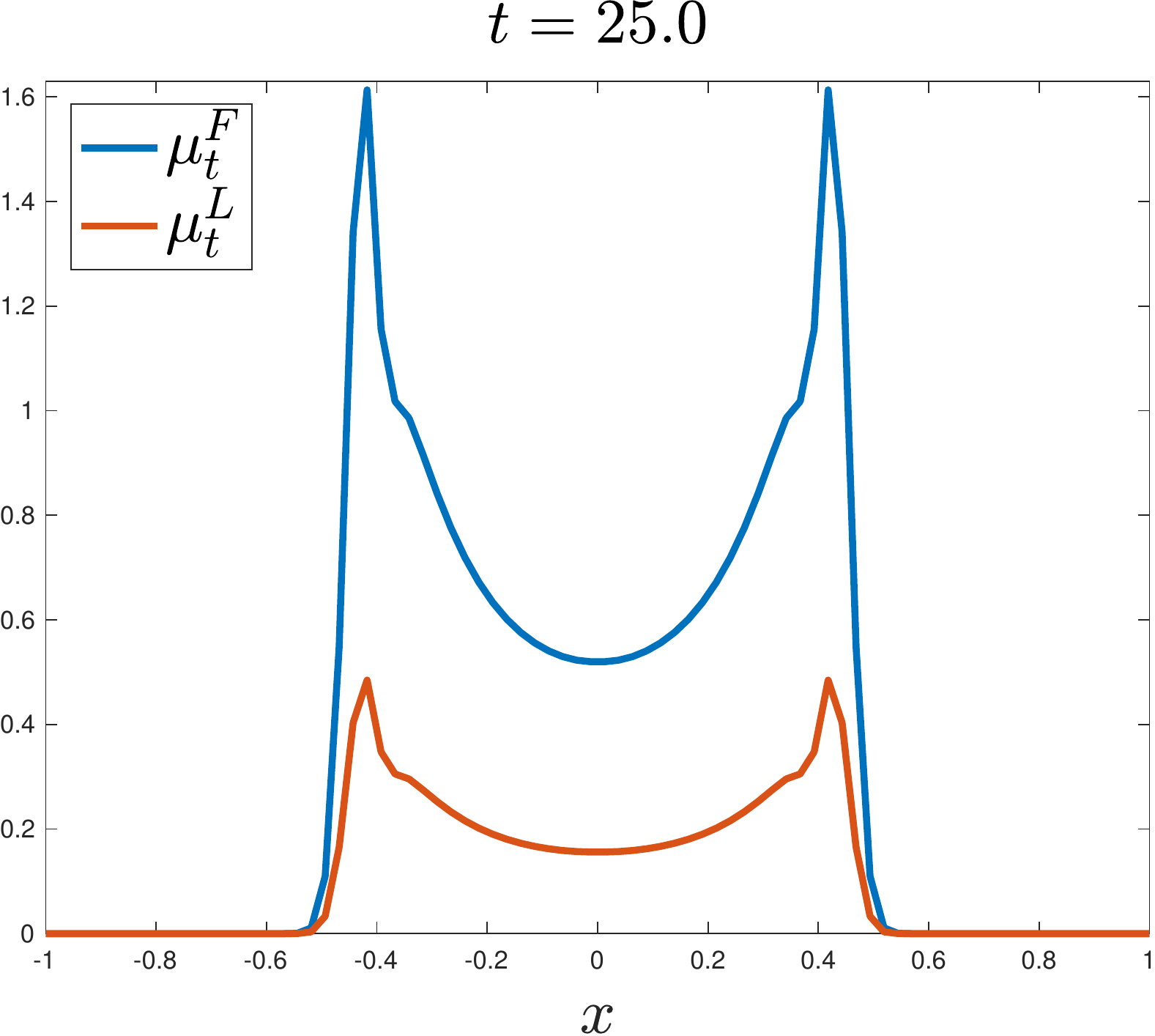}
\caption{{\em Test IIb}. Top line: the left-hand picture shows the total density $\nu_t$, the right-hand picture shows the evolution of the masses $\sigma_t^F,\sigma_t^L$.
Bottom line: From left to right we depict the evolution of the leaders' and followers' densities from the initial data to the final state.}\label{fig:Test2b}
\end{figure}

\subsection{Test III: Leaders with steering action }
We study a population of leaders aiming to reach a desired position $\hat x\in \Omega$, and how their motion influences the followers' density. 
The followers' dynamics is governed by an aggregation equation of the type
\begin{align*}
K^F(x) = a^F(x)x,\qquad a^F(x) = (\epsilon+|x|)^{c_A}-\frac{\ell_R}{(\epsilon+|x|)^{c_R}}.
\end{align*}
Leaders have a steering drift towards $\hat x$ of the form
\begin{align*}
|\mu^L|K^L(x)=\sigma(L)\left(\hat x-x\right)
\end{align*}
which also complies with our  abstract framework, as discussed in Remark \ref{rem:servepersteer}.

We choose a constant rate for the death of leaders $\alpha_L = 0.25$, and the following state-dependent rate for the birth of leaders
\begin{align}\label{T3:alfaF}
\alpha_F(\mu_t^F,\mu_t^L) =\frac{1}{1+e^{c_F(\delta_F-\mathcal{D}(\mu_t^F))}},
\end{align}
with $c_F=1000$, and where $\mathcal{D}(\mu^F_t)$ is the variance of followers' density  with respect to the desired configuration $\hat x$, 
\begin{equation*}
\mathcal{D}(\mu_t^F) = \frac{1}{|\sigma_t(F)|}\int_ \Omega|\hat x - x|^2d\mu_t^F(x).
\end{equation*}
Hence, we expect the leaders' density to increase when followers are not concentrated around $\hat x$, and to vanish as soon as the desired state is approached.

This test case is inspired by applications in pedestrian dynamics, where a part of the total mass of agents (leaders) is used as control variable to improve the evacuation time of a crowd \cite{albi2015invisible,burger2014mean}.
We remain in a simplified setting: similarly to the previous tests, we solve numerically the evolution of the mean-field interaction dynamics in the one-dimensional domain $\Omega =[-1,1]$ with zero-flux boundary conditions. For the numerical discretization we select $N=80$ space grid points, time step $\Delta t = 0.0127$ and final time $T =15$. We have reported in Table \ref{Tab2} the parameters' choice for the different cases.

\begin{table}[t]
\centering
\caption{Computational parameters for Test III.}
\begin{tabular}{c|c|c|c|c|c|c|c|c|c|c}
\hline
Test  & $c_A$& $c_R$ & $\ell_R$ & $\epsilon$ & $\alpha_F$  &$\delta_F$& $\alpha_L$&$\sigma_0(F)$ &$\sigma_0(L)$ & $\hat x$\\
\hline
\hline
 III& 2 & 1  &0.05& 0.0001& \eqref{T3:alfaF} &0.15& 0.25&0.75&0.25 & 0.5 \\
\hline
\end{tabular}
\label{Tab3}
\end{table}


Figure \ref{fig:Test3} shows the evolution of the density $\nu_t(x)$ and the evolution of the followers' and leaders' masses $\sigma^F_t,\sigma^L_t$ in the top row. Bottom row shows the evolutions of $\mu_t^F,\mu_t^L$: the mass of leaders increases initially since followers are far away from $\hat x = 0.5$, as soon as $\mu^F_t$ approaches $\hat x$, while the density of leaders tends to vanish.
%

\vspace{+0.25cm}
\noindent

\begin{figure}
\centering
\includegraphics[scale=0.375]{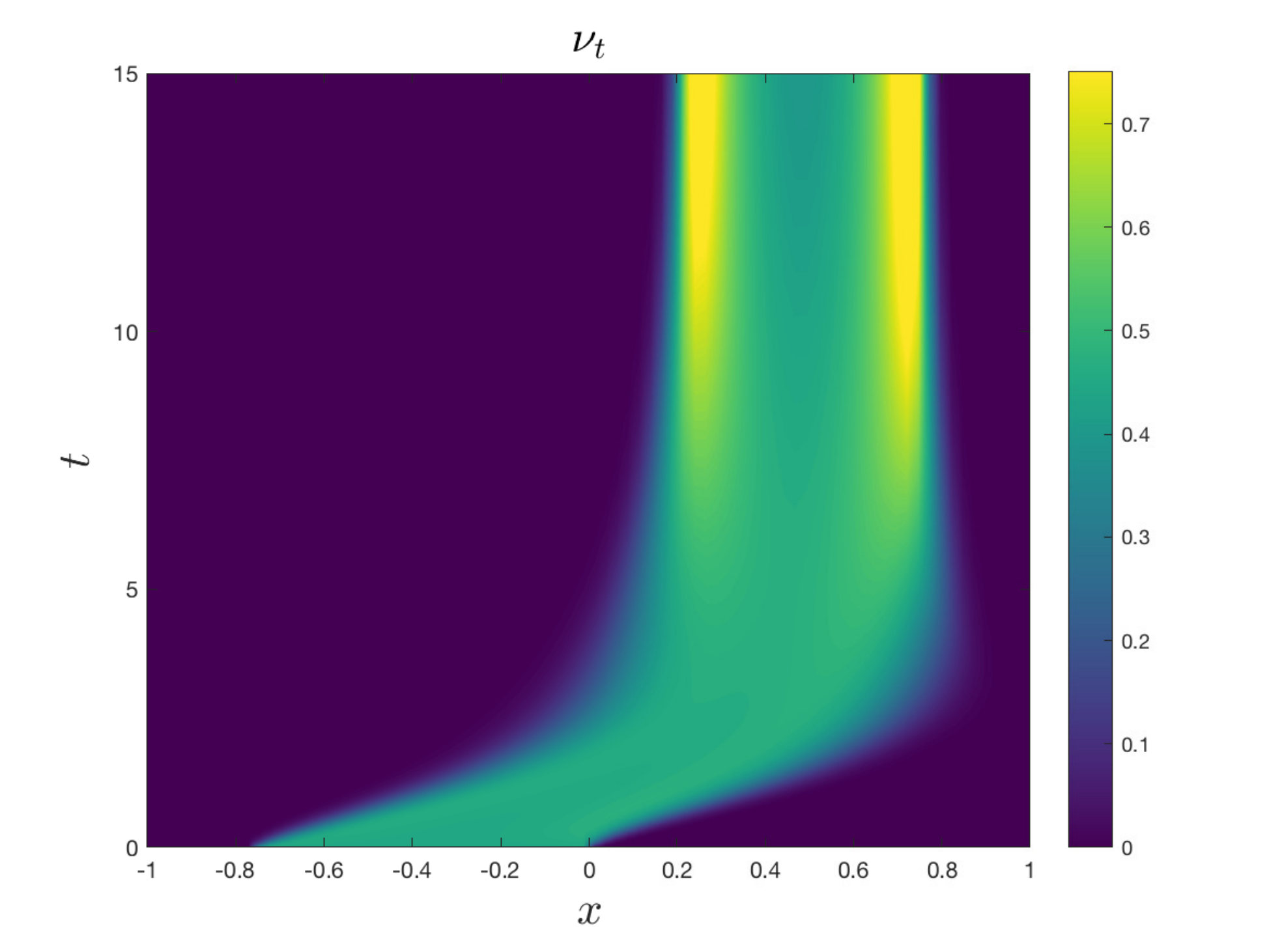}
\qquad
\includegraphics[scale=0.375]{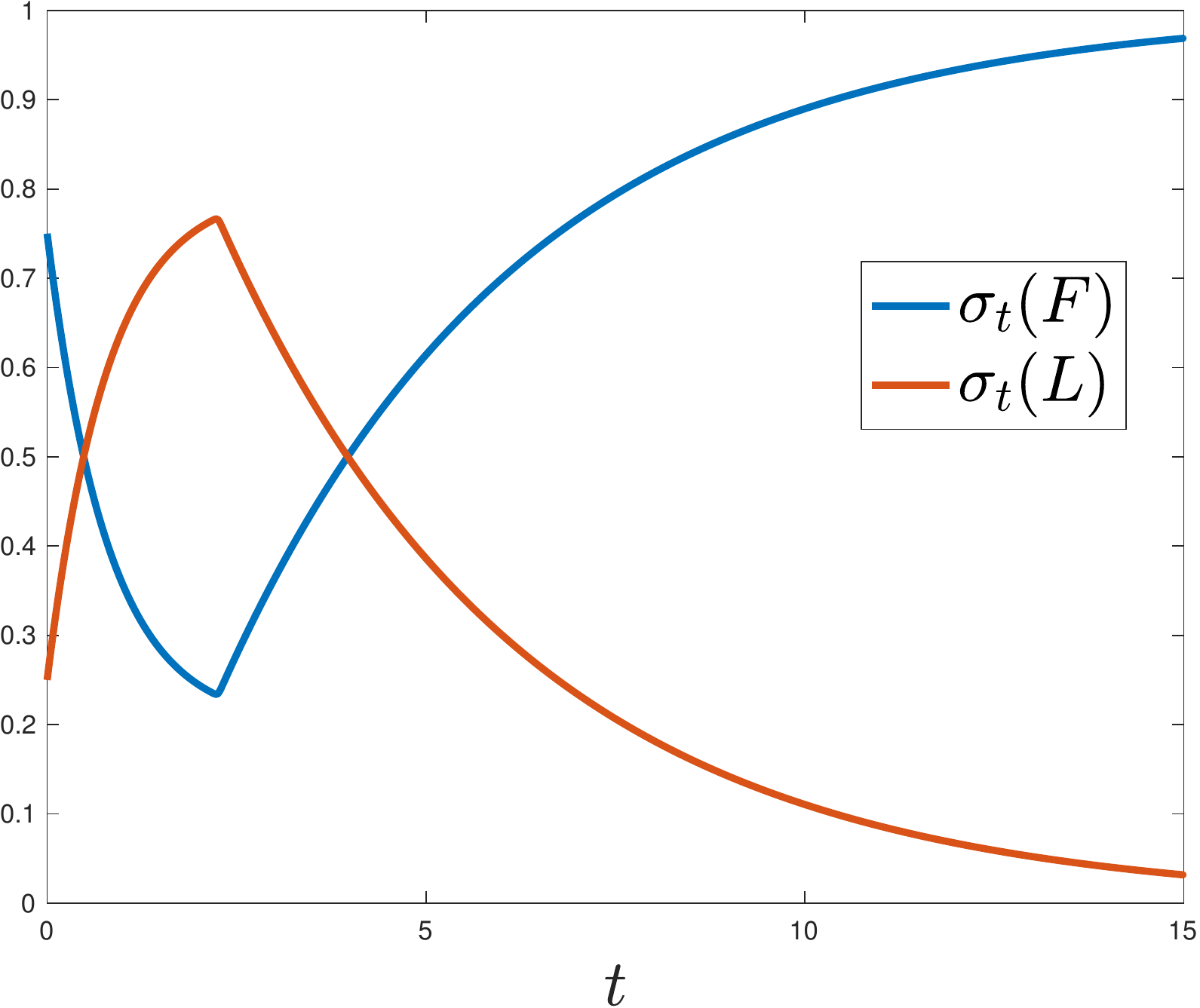}
\\
\includegraphics[width=3.5cm]{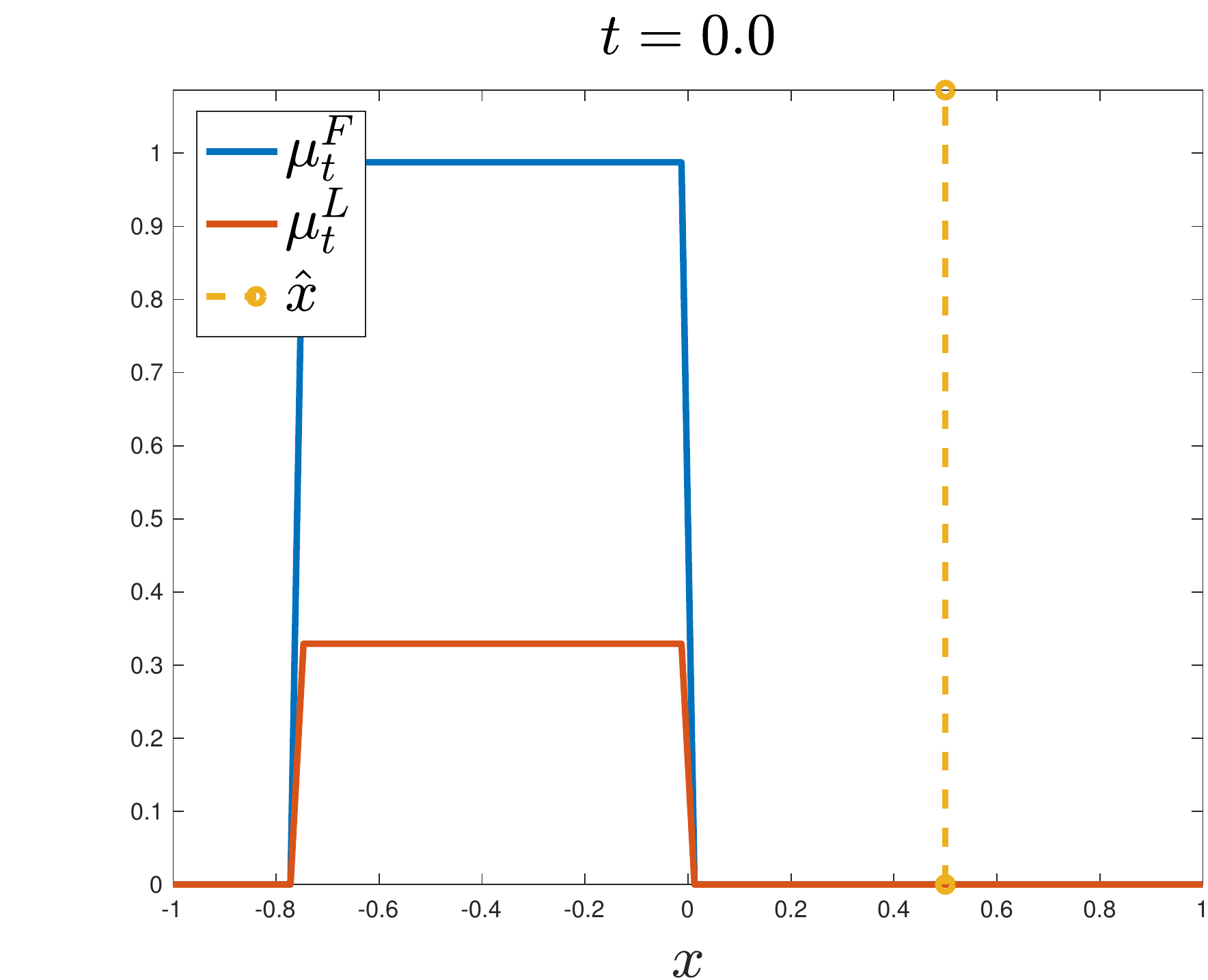}
\includegraphics[width=3.5cm]{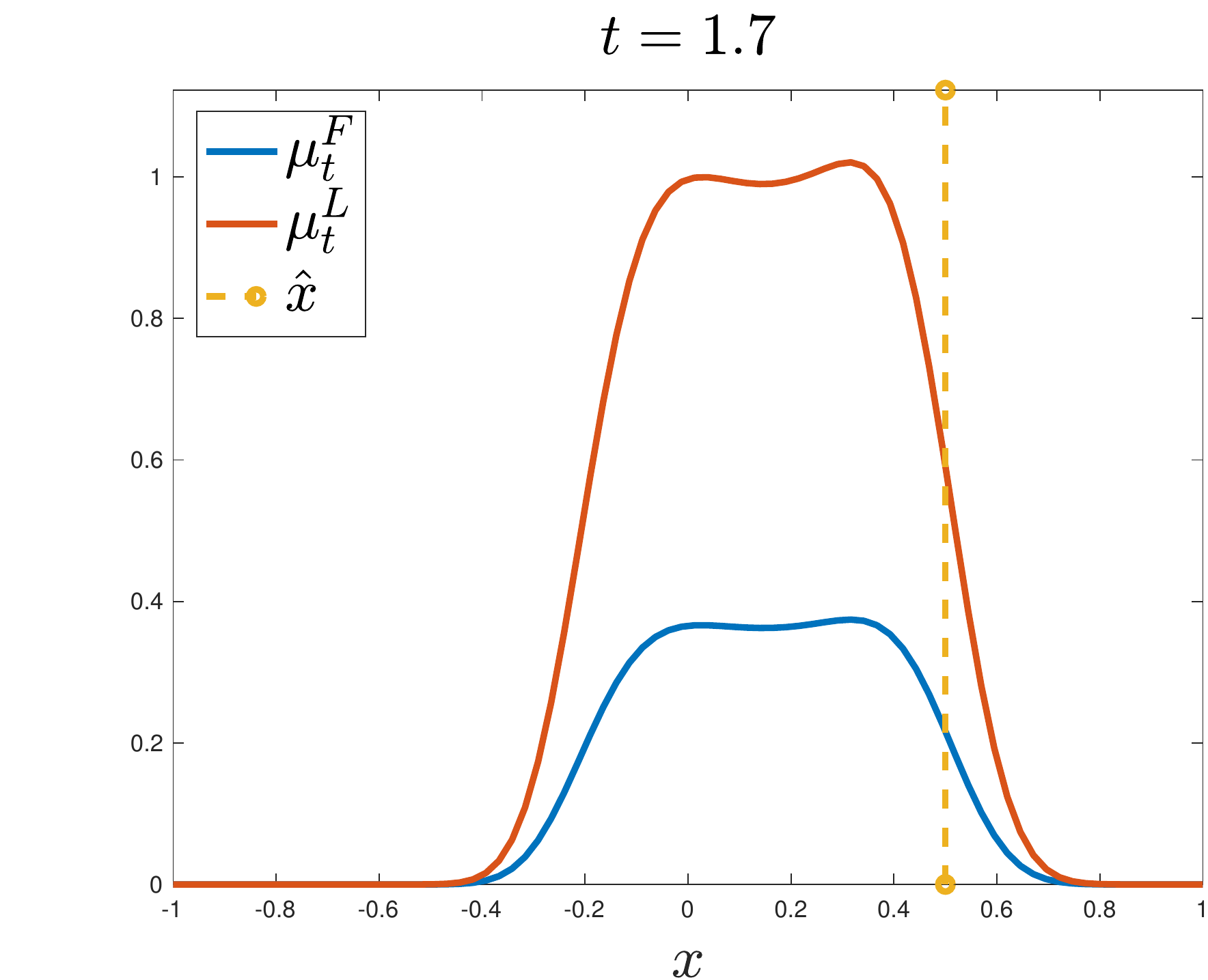}
\includegraphics[width=3.5cm]{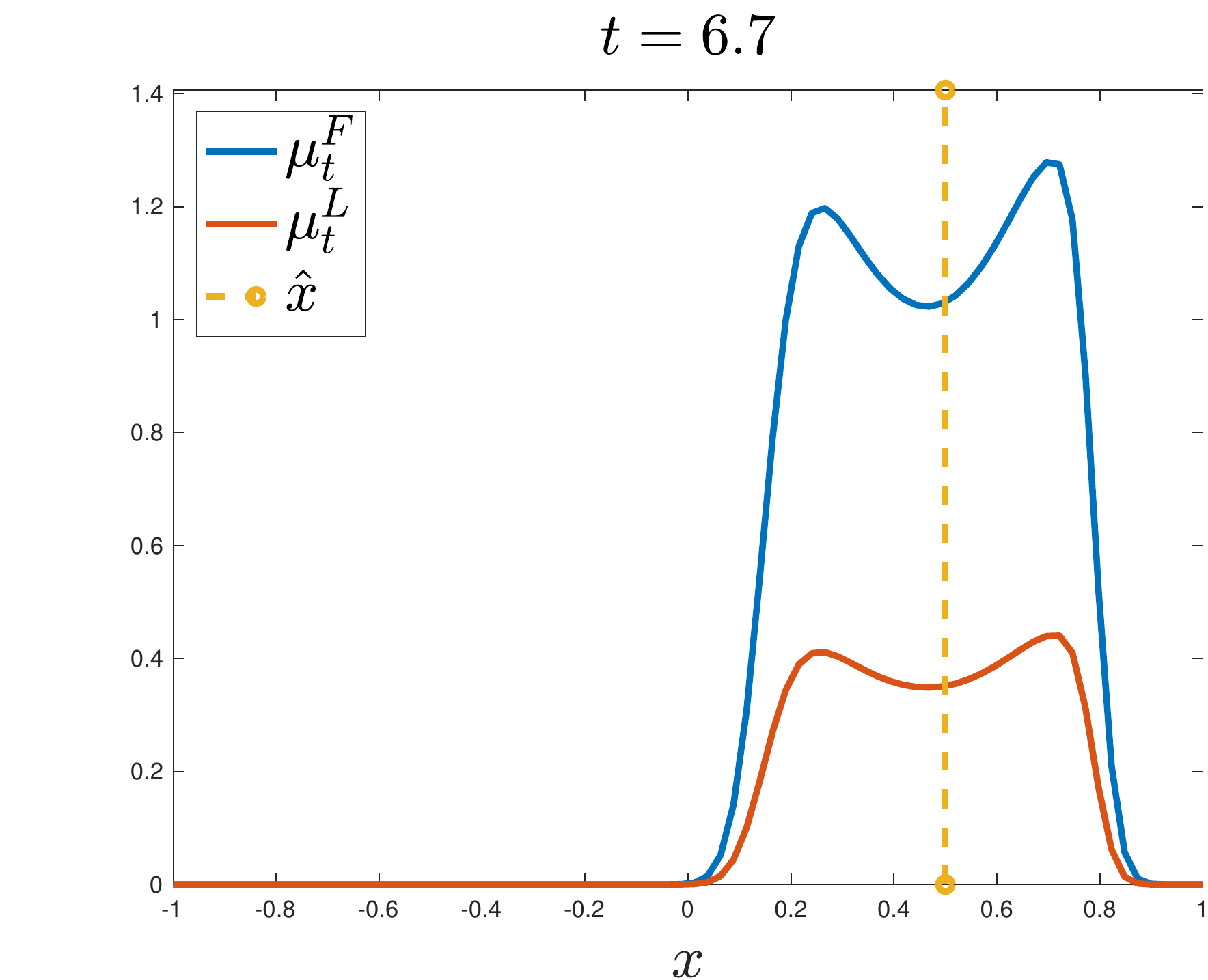}
\includegraphics[width=3.5cm]{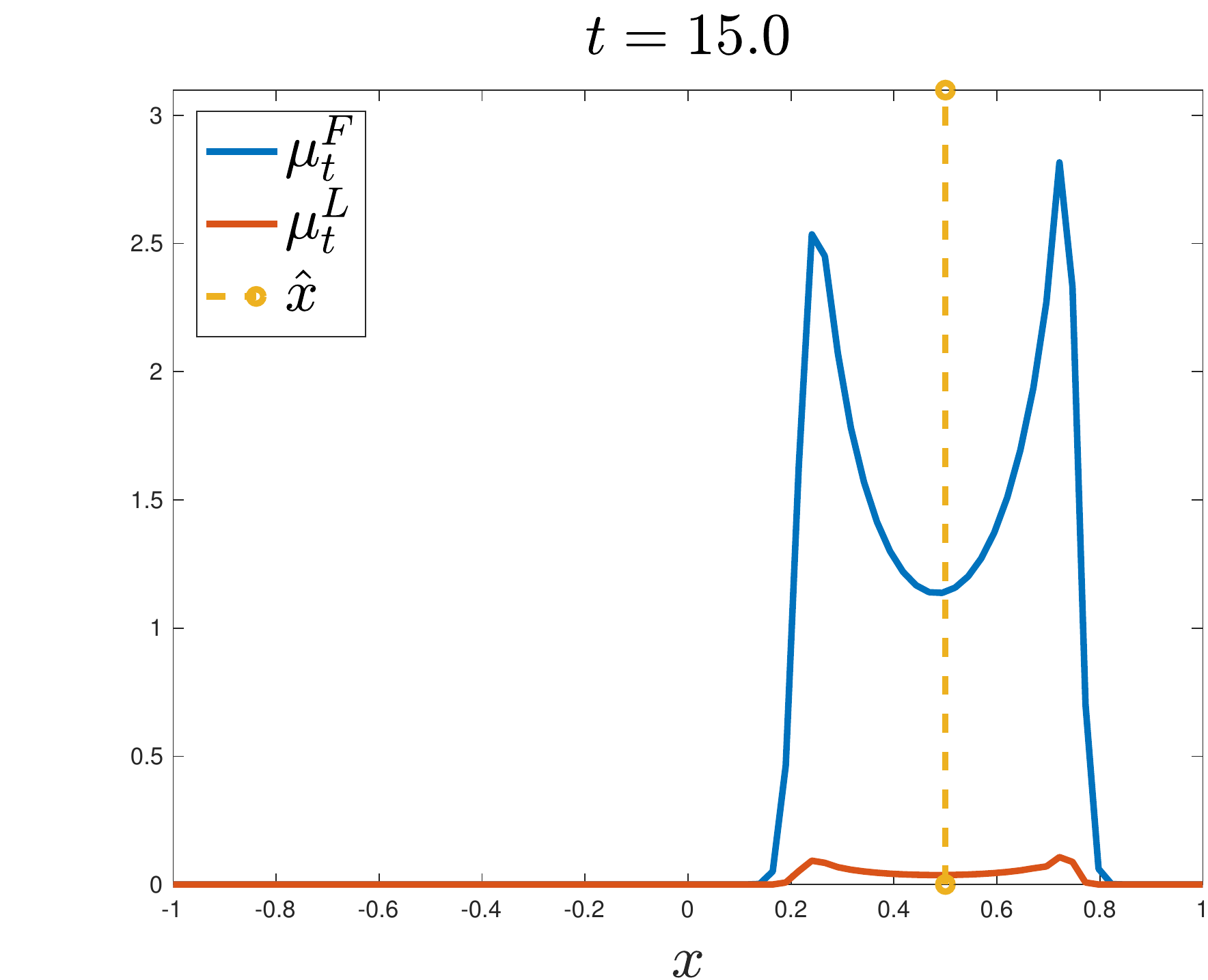}
\caption{{\em Test III}.  Top line: the left-hand picture shows the total density $\nu_t$, the right-hand picture shows the evolution of the masses $\sigma_t^F,\sigma_t^L$.
Bottom line: From left to right we depict the evolution of the leaders' and followers' densities towards the desired position $\hat x =0.5$.}\label{fig:Test3}
\end{figure}
%

\subsubsection*{Acknowledgments}
This work has been supported by the INdAM-GNCS 2018 project {\it Numerical methods for multiscale control problems}, and the project {\it ``MIUR Departments of Excellence 2018-2022".}

\appendix

\section{Examples of transition functionals}\label{sec:assumptions}


We prove a simple sufficient condition for $\alpha_F$ and $\alpha_L$ to satisfy \ref{ass:lipalpha}.

\begin{proposition}\label{prop:lipalpha}
For $i=1, \dots, 5$, let $f_i:\R^d\to \R^{m_1}$ be given locally Lipschitz continuous functions and consider a locally Lipschitz function $\alpha: \mathbb{R}^{m_1+m_2+m_3+m_4+m_5}\to \mathbb{R}$. Then the map
\[
\alpha(\mu, \eta)=\alpha\left(\int_{\R^d} (f_1 \conv \mu)\,\mathrm{d}\mu, \int_{\R^d} (f_2 \conv \eta)\,\mathrm{d}\mu, \int_{\R^d} (f_3 \conv \eta)\,\mathrm{d}\eta, \int_{\R^d} f_4\,\mathrm{d}\mu, \int_{\R^d}f_5\,\mathrm{d}\eta \right)
\]
satisfies Assumption \ref{ass:lipalpha}.
\end{proposition}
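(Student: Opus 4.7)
My plan is to reduce the assertion to a chain of Lipschitz estimates by exploiting the local Lipschitz continuity of the outer function $\alpha$, and then to check the Lipschitz dependence of each of the five integral-type functionals separately, using Theorem \ref{t-flat} (Kantorovich–Rubinstein duality for $\mathcal{W}_g$). Fix $M, R > 0$ and assume that $\mu^F, \mu^L, \nu^F, \nu^L$ satisfy the mass constraint \eqref{e-massconstraint} and the support constraint \eqref{e-boundedsupp}. For any such $(\mu, \eta)$ the five arguments of $\alpha$ are uniformly bounded in absolute value by a constant depending only on $M$, $R$ and the sup norm of each $f_i$ on $B(0, 2R)$; in particular, there exists a compact $K \subset \mathbb{R}^{m_1+\dots+m_5}$ containing the images of all admissible $(\mu,\eta)$. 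Since $\alpha$ is locally Lipschitz, it is Lipschitz on $K$ with some constant $L_\alpha$. It therefore suffices to prove that each of the five arguments is Lipschitz with respect to $\gw{\mu, \tilde\mu} + \gw{\eta, \tilde\eta}$, with a constant depending only on $M$ and $R$.

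For the two linear terms $\int f_4\, d\mu$ and $\int f_5 \,d\eta$, the argument is immediate. Since $f_4, f_5$ are locally Lipschitz, their restrictions to $B(0,R)$ are both bounded and Lipschitz; extending them by truncation and a Lipschitz cutoff to globally bounded, globally Lipschitz functions on $\R^d$ does not change the integrals (because the measures are supported in $B(0,R)$). Hence by Theorem \ref{t-flat}, for any component of $f_4$ we have $|\int f_4\, d(\mu-\tilde\mu)| \le C_{R} \gw{\mu, \tilde\mu}$, and similarly for $f_5$.

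For the three bilinear convolution terms, I would treat them in a uniform way. Consider for instance $F(\mu) := \int (f_1 \conv \mu)\, d\mu$; split the difference as
\[
F(\mu) - F(\tilde\mu) = \int (f_1 \conv \mu)\, d(\mu - \tilde\mu) + \int \bigl(f_1 \conv (\mu-\tilde\mu)\bigr)\, d\tilde\mu.
\]
The function $x \mapsto (f_1 \conv \mu)(x)$, restricted to $B(0, R)$, is bounded by $\|f_1\|_{L^\infty(B(0,2R))} |\mu|$ and Lipschitz with constant $\mathrm{Lip}(f_1|_{B(0,2R)}) |\mu|$; both are controlled in terms of $M$ and $R$. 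Hence Theorem \ref{t-flat} bounds the first summand by $C_{M,R} \gw{\mu, \tilde\mu}$. For the second summand, I swap the order of integration (by Fubini, since integrands are bounded on the compact supports) to rewrite it as $\int (f_1(-\cdot) \conv \tilde\mu)\, d(\mu-\tilde\mu)$, and apply the same estimate. The mixed term $\int (f_2 \conv \eta)\, d\mu$ is split analogously as
\[
\int (f_2 \conv \eta)\, d\mu - \int (f_2 \conv \tilde\eta)\, d\tilde\mu = \int (f_2 \conv \eta)\, d(\mu-\tilde\mu) + \int \bigl(f_2 \conv (\eta - \tilde\eta)\bigr)\, d\tilde\mu,
\]
and the purely-$\eta$ term $\int (f_3 \conv \eta)\, d\eta$ is handled exactly as $F(\mu)$ above.

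Combining the five Lipschitz estimates with the outer Lipschitz constant $L_\alpha$ from local Lipschitz continuity of $\alpha$ on $K$ yields \eqref{e-gwLip-prel} with a constant $L_{\alpha, M, R}$ of the required form. The only delicate point, which is not a real obstacle but deserves care, is to make sure that the convolutions $f_i \conv \mu$ (and their reflections) can be controlled in $C^0$ and in Lipschitz seminorm \emph{uniformly} on $B(0,R)$ using only $M, R$ and the behavior of $f_i$ on $B(0,2R)$; this uses crucially the compact support constraint \eqref{e-boundedsupp} and is the reason the local Lipschitz hypothesis on $f_i$ suffices rather than requiring global Lipschitzianity.
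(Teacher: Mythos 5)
Your proof is correct and mirrors the paper's argument: the same decomposition of the bilinear terms, the same Fubini/reflection step turning $\int (f_i \conv (\mu-\tilde\mu))\, d\tilde\mu$ into a pairing of a fixed bounded Lipschitz function against $\mu-\tilde\mu$, and the same appeal to the Kantorovich--Rubinstein duality of Theorem \ref{t-flat}, after first exploiting uniform boundedness of the five arguments to reduce local Lipschitzianity of $\alpha$ to Lipschitzianity on a compact set. The only cosmetic differences are that the paper reduces explicitly to scalar-valued $f_i$ by arguing componentwise and works out only the mixed $f_2$ term, whereas you work out the $f_1$ term; the substance is the same.
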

\begin{proof}
By possibly arguing componentwise on the $f_i$'s we can only consider the case $m_1=m_2=m_3=m_4=m_5=1$. For all $\mu$, $\eta$ satisfying $\mu(\R^d)$, $\eta(\R^d)\le M$ and with support contained in $B(0, R)$, we clearly have
\[
\begin{split}
&\left|\int_{\R^d} (f_1 \conv \mu)\,\mathrm{d}\mu\right|\le M^2 \mathrm{max}_{B(0, 2R)}|f_1|,\quad \left|\int_{\R^d} (f_2 \conv \mu)\,\mathrm{d}\eta\right|\le M^2 \mathrm{max}_{B(0, 2R)}|f_2|,\\
&\left|\int_{\R^d} (f_3 \conv \eta)\,\mathrm{d}\eta\right|\le M^2 \mathrm{max}_{B(0, 2R)}|f_3|,\quad \left|\int_{\R^d} f_4\,\mathrm{d}\mu\right|\le M \mathrm{max}_{B(0, R)}|f_4|\\
&  \left|\int_{\R^d} f_5\,\mathrm{d}\eta\right|\le M \mathrm{max}_{B(0, R)}|f_5|\,.
\end{split}
\]
With this hypothesis, since the function $\alpha$ is Lipschitz, it only suffices to show that the functions
\[
\begin{split}
& (\mu, \eta) \mapsto \int_{\R^d} (f_1 \conv \mu)\,\mathrm{d}\mu,\quad (\mu, \eta) \mapsto \int_{\R^d} (f_2 \conv \mu)\,\mathrm{d}\eta\\
& (\mu, \eta) \mapsto \int_{\R^d} (f_3 \conv \eta)\,\mathrm{d}\eta, \quad (\mu, \eta) \mapsto \int_{\R^d} f_4\,\mathrm{d}\mu, \quad \quad (\mu, \eta) \mapsto \int_{\R^d} f_5\,\mathrm{d}\eta
\end{split}
\] satisfy \ref{ass:lipalpha}. We only discuss the second case, since the proof in the other cases is similar.

Denote with $\tilde f_2$ the function defined by $\tilde f_2(x)=f_2(-x)$.  Whenever $\mu$ has support contained in $B(0, R)$  and satisfies $\mu(\R^d)\le M$ we clearly have
\begin{equation}\label{e-supremo}
\sup_{x\in B(0, R)} |f_2 \conv \mu|(x)\le M \sup_{x \in B(0, 2R)}|f_2(x)|, \quad \sup_{x\in B(0, R)} |\tilde f_2 \conv \mu|(x)\le M \sup_{x \in B(0, 2R)}|f_2(x)|\,.
\end{equation}
A direct computation also shows that, if we denote with $\mathrm{Lip}_R$ the Lipschitz constant on a ball of radius $R$, it holds
\begin{equation}\label{e-lipconst}
\mathrm{Lip}_R(f_2 \conv \mu) \le M \mathrm{Lip}_{2R}(f_2), \quad \mathrm{Lip}_R(\tilde f_2 \conv \mu) \le M \mathrm{Lip}_{2R}(f_2) \,.
\end{equation}
Take now $(\mu_1, \eta_1)$ and $(\mu_2, \eta_2)$ positive measures satisfying \eqref{e-massconstraint} and \eqref{e-boundedsupp}. Use \eqref{e-supremo} and \eqref{e-lipconst}, toghether with the Kantorovich-Rubinstein duality, we have
\[
\begin{split}
&\left|\int_{\R^d} (f_2 \conv \mu_1)\,\mathrm{d}\eta_1- \int_{\R^d} (f_2 \conv \mu_2)\,\mathrm{d}\eta_2\right|= \left|\int_{B(0, R)} (f_2 \conv \mu_1)\,\mathrm{d}\eta_1- \int_{B(0, R)} (f_2 \conv \mu_2)\,\mathrm{d}\eta_2\right|\\
&=\left|\int_{B(0, R)} (f_2 \conv \mu_1)\,\mathrm{d}(\eta_1-\eta_2)+ \int_{B(0, R)} (\tilde f_2 \conv \eta_2)\,\mathrm{d}(\mu_1-\mu_2)\right|\\
& \le C_{M, R} (\gw{\mu_1, \mu_2}+\gw{\eta_1, \eta_2}),
\end{split}
\]
with $C_{M, R}= M ( \sup_{x \in B(0, 2R)}|f_2(x)|+ \mathrm{Lip}_{2R}(f_2))$. This concludes the proof.
\end{proof}

\begin{example}
The statement above is clearly still valid if $\alpha$ only depends on some of the variables indicated above. In some applications (as for instance in \cite{markowich}) the transition rate $\alpha_i$ behaves countercyclically with respect to the mass of $\mu^i_t$: whenever $|\mu^i_t|$ is below a certain threshold $\varepsilon > 0$, the function $\alpha_i$ increases in order to restore  $|\mu^i_t|$ to higher levels. To model this phenomenon, let $\chi_{\varepsilon}$ be a mollification of the function
	$$\overline{\chi}_{\varepsilon}(x) = \begin{cases}
	1 & \text{ if } x \leq \varepsilon \\
	\overline{c} & \text{ otherwise,}
	\end{cases}$$
	with $0\le \overline{c}<1$. Then, by Proposition \ref{prop:lipalpha} (with $f_i=0$ for $i=1,\dots,4$ and $f_5\equiv 1$) the function
	$\alpha(\mu,\eta) \defin \chi_{\varepsilon}\left(|\eta|\right)$
	satisfies Assumption \ref{ass:lipalpha}.

Also quotients of functions of the type considered in Proposition \ref{prop:lipalpha} are easily seen to comply with Assumption \ref{ass:lipalpha}, provided that the denominator is bounded away from zero. For instance, for a given scalar-valued $f:\R^d \to \R$ and $g(\lambda)=((1-\lambda) \wedge \epsilon)^2$, where $\epsilon>0$ is a fixed threshold, one can consider a function of the type
\begin{equation}\label{eq:esempio}
\alpha(\mu,\eta):= \alpha\left(\frac{\int_{\R^d} (f\conv \mu)\,\mathrm{d}\mu}{g(|\eta|)}\right)\,.
\end{equation}
If $\nu \in \mathcal{P}_1(\R^d)$, $\sigma \in \mathcal{P}_1(\{F, L\})$ and we set $\mu:=\sigma(L)\nu, \eta:=\sigma(F)\nu$, then the above function reduces to
\[
\alpha(\nu, \sigma)=\alpha\left(\frac{\sigma(L)^2\int_{\R^d} (f\conv \nu)\,\mathrm{d}\nu}{(\sigma(L)\wedge \epsilon)^2}\right)
\]
which, as long as $\sigma(L)\ge \epsilon$, coincides with $\alpha(\int_{\R^d} (f\conv \nu)\,\mathrm{d}\nu)$ and only takes into account the total distribution $\nu$ of the two populations.
\end{example}

\section{Finite-volume scheme for mean-field leader-follower dynamics}\label{sec:FVscheme}
We introduce a finite-volume scheme for the discretization of the mean-field system \eqref{eq:macroleadfollstrong} in one-space dimension. Hence we consider a constant discretization step $\Delta x>0$, and we define $x_\ell=\ell\Delta x$ with $\ell\in\mathbb{Z}$, and the cells $C_\ell = [x_{\ell-1/2},x_{\ell+1/2}]$, with $x_\ell\pm1/2=x_\ell\pm\Delta x/2$, over which we define the averages
\begin{equation*}
\mu_\ell^i(t) = \frac{1}{\Delta x}\int_{x_{\ell-1/2}}^{x_{\ell+1/2}} \mu^i(x,t) dx,\qquad i\in\{F,L\}, 
\end{equation*}
where we used the notation $\mu^i(x,t)$ for the measure $\mu_t^i(x)$. In the same spirt we define the numerical fluxes as follows
\begin{equation*}
\begin{split}
\mathcal{F}^{i}_{\ell+1/2} = \mathcal{K}_{\ell+1/2}[\mu^F,\mu^L]\mu^i_{\ell+1/2},\qquad i\in\{F,L\}.
\end{split}
\end{equation*}
In what follows we will consider an upwinding scheme, where the convolutional operator $\mathcal{K}(\mu^F,\mu^L) := (K^{F}\conv\mu^F+K^{L}\conv\mu^L)(x_{\ell+1/2})$  is evaluated at the interfaces according to quadrature formula, and the densities $\mu^i_{\ell+1/2}$ are defined as follows
\begin{equation*}
\begin{split}
&\mu^i_{\ell+1/2} = \begin{cases}
\mu_{\ell+1}^{i}\qquad &\textrm{if }~  \mathcal{K}_{\ell+1/2}<0,\\
\mu_{\ell}^{i}\qquad &\textrm{otherwise}.
\end{cases}
\end{split}
\end{equation*}
The sources terms are computed by averaging the transition rates $\alpha_F,\alpha_L$,  as follows
\begin{align*}
\mathcal{A}^{i}_{\ell}(t) =\frac{1}{\Delta x}\int_{x_{\ell-1/2}}^{x_{\ell+1/2}}\alpha_i(\mu^F,\mu^L,t)\mu^i(x)dx,\qquad i\in\{F,L\}.
\end{align*}
We employ a first-order time marching scheme to compute the solution $\mu^i_{\ell}(t)$ over the time grid $0=t_0,\ldots,t_{N_t}=T$, with fixed time step $\Delta t= t_{n+1}-t_n$.  Moreover we used a splitting technique to evaluate separately the contribution by the non-linear transport and the source terms. Thus the full discrete scheme reads
\begin{subequations}
\begin{align*}
&\begin{cases}
&\mu^{F,\star}_\ell =\mu^{F,n}_\ell-\frac{\Delta t}{\Delta x}\left(\mathcal{F}^F_{\ell+1/2} - \mathcal{F}^F_{\ell-1/2} \right),\cr
&\mu^{L,\star}_\ell =\mu^{L,n}_\ell-\frac{\Delta t}{\Delta x}\left(\mathcal{F}^L_{\ell+1/2} - \mathcal{F}^L_{\ell-1/2} \right),
\end{cases}
\\
&\begin{cases}
&\mu^{F,n+1}_\ell =\mu^{F,\star}_\ell-{\Delta t}\left(\mathcal{A}^{F,\star}_\ell -\mathcal{A}^{L,\star}_\ell\right),\cr
&\mu^{L,n+1}_\ell =\mu^{L,\star}_\ell+{\Delta t}\left(\mathcal{A}^{F,\star}_\ell -\mathcal{A}^{L,\star}_\ell\right).
\end{cases}
\end{align*}
\end{subequations}

\bibliographystyle{abbrv}

\begin{thebibliography}{10}

\bibitem{ahn13}
S.~Ahn, H.-O. Bae, S.-Y. Ha, Y.~Kim, and H.~Lim.
\newblock Application of flocking mechanism to the modeling of stochastic
  volatility.
\newblock {\em Math. Models Methods Appl. Sci.}, 23(9):1603--1628, 2013.

\bibitem{albi2015invisible}
G.~Albi, M.~Bongini, E.~Cristiani, and D.~Kalise.
\newblock Invisible control of self-organizing agents leaving unknown
  environments.
\newblock {\em SIAM J. Appl. Math.}, 76(4):1683--1710, 2016.

\bibitem{albi2017mean}
G.~Albi, Y.-P. Choi, M.~Fornasier, and D.~Kalise.
\newblock Mean field control hierarchy.
\newblock {\em Applied Mathematics \& Optimization}, 76(1):93--135, 2017.

\bibitem{albi2014boltzmann}
G.~Albi, L.~Pareschi, and M.~Zanella.
\newblock Boltzmann-type control of opinion consensus through leaders.
\newblock {\em Phil. Trans. R. Soc. A}, 372(2028):20140138, 2014.

\bibitem{albi2017opinion}
G.~Albi, L.~Pareschi, and M.~Zanella.
\newblock Opinion dynamics over complex networks: Kinetic modelling and
  numerical methods.
\newblock {\em Kinetic \& Related Models}, 10(1):1--32, 2017.

\bibitem{ambrosio}
L.~Ambrosio and W.~Gangbo.
\newblock Hamiltonian {ODE}s in the {W}asserstein space of probability
  measures.
\newblock {\em Comm. Pure Appl. Math.}, 61(1):18--53, 2008.

\bibitem{AGS}
L.~Ambrosio, N.~Gigli, and G.~Savar{\'e}.
\newblock {\em {G}radient {F}lows in {M}etric {S}paces and in the {S}pace of
  {P}robability {M}easures}.
\newblock Lectures in Mathematics ETH Z\"urich. Birkh\"auser Verlag, Basel,
  2008.

\bibitem{bak2013nature}
P.~Bak.
\newblock {\em How nature works: the science of self-organized criticality}.
\newblock Springer Science \& Business Media, 2013.

\bibitem{parisi08}
M.~Ballerini, N.~Cabibbo, R.~Candelier, et~al.
\newblock Interaction ruling animal collective behavior depends on topological
  rather than metric distance: Evidence from a field study.
\newblock {\em P. Natl. Acad. Sci. USA}, 105(4):1232--1237, 2008.

\bibitem{bellmandynamic}
R.~Bellman.
\newblock {\em {Dynamic programming.}}
\newblock {Princeton University Press}, 1957.

\bibitem{Bellomo2017crowd}
N.~Bellomo and C.~Dogbe.
\newblock On the modeling of traffic and crowds: a survey of models,
  speculations, and perspectives.
\newblock {\em SIAM Rev.}, 53(3):409--463, 2011.

\bibitem{bongini2016optimal}
M.~Bongini and G.~Buttazzo.
\newblock Optimal control problems in transport dynamics.
\newblock {\em Math. Models Methods Appl. Sci.}, 27(3):427--451, 2017.

\bibitem{bonginichapsparse}
M.~Bongini and M.~Fornasier.
\newblock Sparse control of multiagent systems.
\newblock In {\em Active Particles, Volume 1}, pages 259--298. Springer, 2017.

\bibitem{maggioni17}
M.~Bongini, M.~Fornasier, M.~Hansen, and M.~Maggioni.
\newblock {Inferring interaction rules from observations of evolutive systems
  I: The variational approach}.
\newblock {\em Math. Models. Meth. Appl. Sci.}, 27(05):909--951, 2017.

\bibitem{bongini2016pontryagin}
M.~Bongini, M.~Fornasier, F.~Rossi, and F.~Solombrino.
\newblock Mean-field {P}ontryagin maximum principle.
\newblock {\em J. Optim. Theory Appl.}, 175(1):1--38, 2017.

\bibitem{burger2014mean}
M.~Burger, M.~Di~Francesco, P.~A. Markowich, and M.-T. Wolfram.
\newblock Mean field games with nonlinear mobilities in pedestrian dynamics.
\newblock {\em Discrete \& Continuous Dynamical Systems-B}, 19(5):1311--1333,
  2014.

\bibitem{carrillo2014derivation}
J.~A. Carrillo, Y.-P. Choi, and M.~Hauray.
\newblock The derivation of swarming models: mean-field limit and wasserstein
  distances.
\newblock In {\em Collective dynamics from bacteria to crowds}, pages 1--46.
  Springer, 2014.

\bibitem{carrillo2017review}
J.~A. Carrillo, Y.-P. Choi, and S.~P. Perez.
\newblock A review on attractive--repulsive hydrodynamics for consensus in
  collective behavior.
\newblock In {\em Active Particles, Volume 1}, pages 173--228. Springer, 2017.

\bibitem{carrillo2018splitting}
J.~A. Carrillo, S.~Fagioli, F.~Santambrogio, and M.~Schmidtchen.
\newblock Splitting schemes and segregation in reaction cross-diffusion
  systems.
\newblock {\em SIAM Journal on Mathematical Analysis}, 50(5):5695--5718, 2018.

\bibitem{Chizat2018}
L.~Chizat, G.~Peyr{\'e}, B.~Schmitzer, and F.-X. Vialard.
\newblock An interpolating distance between optimal transport and fisher--rao
  metrics.
\newblock {\em Foundations of Computational Mathematics}, 18(1):1--44, 2018.

\bibitem{cirant2015multi}
M.~Cirant.
\newblock Multi-population mean field games systems with {N}eumann boundary
  conditions.
\newblock {\em Journal de Math{\'e}matiques Pures et Appliqu{\'e}es},
  103(5):1294--1315, 2015.

\bibitem{cordier2005kinetic}
S.~Cordier, L.~Pareschi, and G.~Toscani.
\newblock On a kinetic model for a simple market economy.
\newblock {\em J. Stat. Phys.}, 120(1-2):253--277, 2005.

\bibitem{cristiani2014multiscale}
E.~Cristiani, B.~Piccoli, and A.~Tosin.
\newblock {\em Multiscale modeling of pedestrian dynamics}, volume~12.
\newblock Springer, 2014.

\bibitem{dobrushin79}
R.~{Dobrushin}.
\newblock {Vlasov equations}.
\newblock {\em {Funct. Anal. Appl.}}, 13(2):115--123, 1979.

\bibitem{dorigo2005ant}
M.~Dorigo and C.~Blum.
\newblock Ant colony optimization theory: A survey.
\newblock {\em Theoretical computer science}, 344(2-3):243--278, 2005.

\bibitem{dudley}
R.~M. Dudley.
\newblock {\em Real Analysis and Probability}.
\newblock Chapman and Hall/CRC, 1989.

\bibitem{markowich}
B.~D{\"u}ring, P.~Markowich, J.-F. Pietschmann, and M.-T. Wolfram.
\newblock {Boltzmann and Fokker--Planck equations modelling opinion formation
  in the presence of strong leaders}.
\newblock {\em Proc. R. Soc. Lond. Ser. A Math. Phys. Eng. Sci.},
  465(2112):3687--3708, 2009.

\bibitem{filipov}
A.~F. {Filippov}.
\newblock {\em {Differential Equations with Discontinuous Righthand Sides}}.
\newblock Kluwer Academic Publishers, 1988.

\bibitem{fornasier2014mean}
M.~Fornasier, B.~Piccoli, and F.~Rossi.
\newblock Mean-field sparse optimal control.
\newblock {\em Philos. Trans. R. Soc. Lond. Ser. A Math. Phys. Eng. Sci.},
  372(2028):20130400, 2014.

\bibitem{MFOC}
M.~Fornasier and F.~Solombrino.
\newblock Mean-field optimal control.
\newblock {\em ESAIM Control Optim. Calc. Var.}, 20(4):1123--1152, 2014.

\bibitem{fournier}
N.~Fournier and A.~Guillin.
\newblock {On the rate of convergence in Wasserstein distance of the empirical
  measure}.
\newblock {\em Probab. Theory Related Fields}, 162(3-4):707--738, 2015.

\bibitem{garroni2018convergence}
A.~Garroni, P.~van Meurs, M.~A. Peletier, and L.~Scardia.
\newblock Convergence and non-convergence of many-particle evolutions with
  multiple signs.
\newblock {\em arXiv preprint arXiv:1810.04934}, 2018.

\bibitem{golseparticle}
F.~{Golse}.
\newblock {The mean-field limit for the dynamics of large particle systems}.
\newblock In {\em {Journ\'ees ``\'Equations aux d\'eriv\'ees partielles'',
  Forges-les-Eaux, France, 2 au 6 juin 2003. Expos\'es Nos. I-XV}}, pages
  1--47. Nantes: Universit\'e de Nantes, 2003.

\bibitem{krause02}
R.~Hegselmann and U.~Krause.
\newblock Opinion dynamics and bounded confidence models, analysis, and
  simulation.
\newblock {\em J. Artif. Soc. Soc. Simulat.}, 5(3), 2002.

\bibitem{huang2018learning}
H.~Huang, J.-G. Liu, and J.~Lu.
\newblock Learning interacting particle systems: Diffusion parameter estimation
  for aggregation equations.
\newblock {\em Mathematical Models and Methods in Applied Sciences}, to appear,
  2018.

\bibitem{kennedy2011particle}
J.~Kennedy.
\newblock Particle swarm optimization.
\newblock In {\em Encyclopedia of machine learning}, pages 760--766. Springer,
  2011.

\bibitem{KMV16}
S.~Kondratyev, L.~Monsaingeon, and D.~Vorotnikov.
\newblock A new optimal transport distance on the space of finite {R}adon
  measures.
\newblock {\em Adv. Differential Equations}, 21(11-12):1117--1164, 2016.

\bibitem{lasry2007mean}
J.-M. Lasry and P.-L. Lions.
\newblock Mean field games.
\newblock {\em Jpn. J. Math.}, 2(1):229--260, 2007.

\bibitem{LMS16}
M.~Liero, A.~Mielke, and G.~Savar\'e.
\newblock Optimal transport in competition with reaction: the
  {H}ellinger-{K}antorovich distance and geodesic curves.
\newblock {\em SIAM J. Math. Anal.}, 48(4):2869--2911, 2016.

\bibitem{mckean1967propagation}
H.~P. McKean.
\newblock Propagation of chaos for a class of non-linear parabolic equations.
\newblock {\em Lecture Series in Differential Equations, Catholic University,
  Washington D.C.}, 7:41--57, 1967.

\bibitem{meleard}
S.~M{\'e}l{\'e}ard.
\newblock Asymptotic behaviour of some interacting particle systems;
  {M}c{K}ean-{V}lasov and {B}oltzmann models.
\newblock In {\em Probabilistic models for nonlinear partial differential
  equations ({M}ontecatini {T}erme, 1995)}, volume 1627 of {\em Lecture Notes
  in Math.}, pages 42--95. Springer, Berlin, 1996.

\bibitem{pedestrian}
B.~Piccoli and F.~Rossi.
\newblock Transport equation with nonlocal velocity in {W}asserstein spaces:
  Convergence of numerical schemes.
\newblock {\em Acta Appl. Math.}, 124(1):73--105, 2013.

\bibitem{gw}
B.~Piccoli and F.~Rossi.
\newblock Generalized {W}asserstein distance and its application to transport
  equations with source.
\newblock {\em Arch. Ration. Mech. Anal.}, 211(1):335--358, 2014.

\bibitem{gw2}
B.~Piccoli and F.~Rossi.
\newblock On properties of the generalized {W}asserstein distance.
\newblock {\em Arch. Ration. Mech. Anal.}, 222(3):1339--1365, 2016.

\bibitem{Rossi-Review}
B.~Piccoli and F.~Rossi.
\newblock Measure-theoretic models for crowd dynamics.
\newblock In {\em Crowd Dynamics Volume 1 - Theory, Models, and Safety
  Problems}. N. Bellomo and L. Gibelli Eds, Birkhauser, to appear.

\bibitem{snitzman}
A.-S. Sznitman.
\newblock Topics in propagation of chaos.
\newblock In {\em \'{E}cole d'\'{E}t\'e de {P}robabilit\'es de {S}aint-{F}lour
  {XIX}---1989}, volume 1464 of {\em Lecture Notes in Math.}, pages 165--251.
  Springer, Berlin, 1991.

\bibitem{thai}
M.-N. Thai.
\newblock Birth and death process in mean field type interaction.
\newblock {\em Bernoulli}, to appear, 2018.

\bibitem{viza12}
T.~Vicsek and A.~Zafeiris.
\newblock Collective motion.
\newblock {\em Phys. Rep.}, 517(3):71--140, 2012.

\bibitem{villani}
C.~Villani.
\newblock {\em {Topics in Optimal Transportation}}, volume~58 of {\em Graduate
  Studies in Mathematics}.
\newblock American Mathematical Society, Providence, RI, 2003.

\end{thebibliography}

\end{document}